\newtheorem{thm}{Theorem}[section]
\newtheorem{cor}[thm]{Corollary}
\newtheorem{lem}[thm]{Lemma}
\newtheorem{prop}[thm]{Proposition}
\theoremstyle{definition}
\newtheorem{defn}[thm]{Definition}
\theoremstyle{remark}
\newtheorem{rem}[thm]{Remark}
\theoremstyle{example}
\newtheorem{exa}[thm]{Example}
\theoremstyle{conjecture}
\numberwithin{equation}{section}
\newcommand{\sig}{\sigma}
\newcommand{\im}{{\rm Im}\,}
\newcommand{\re}{{\rm Re}\,}
\newcommand {\X} {\mathbb X}
\newcommand {\R} {\mathbb R}
\newcommand {\C} {\mathbb C}
\newcommand {\A} {\mathbb A}
\newcommand {\Q} {\mathbb Q}
\newcommand {\calD} {\mathcal D}
\newcommand {\calF} {\mathcal F}
\newcommand {\calC} {\mathcal C}
\newcommand {\calH} {\mathcal H}
\newcommand {\mfd} {\mathbf D}
\newcommand {\bfe} {\mathbf e}
\begin{document}

\title[Complex symmetric evolution equations]{Complex symmetric evolution equations}%

\date{\today}%

\author{Pham Viet Hai}%
\address[P. V. Hai]{Faculty of Mathematics, Mechanics and Informatics, Vietnam National University, 334 Nguyen Trai, Thanh Xuan, Hanoi, Vietnam.}%
\email{phamviethai86@gmail.com}

\author{Mihai Putinar}
\address[M.~Putinar]{University of California at Santa Barbara, CA,
USA and Newcastle University, Newcastle upon Tyne, UK} 
\email{\tt mputinar@math.ucsb.edu, mihai.putinar@ncl.ac.uk}

\subjclass[2010]{47 D06, 47 B32, 30 D15}%

\keywords{complex symmetry, $C_0$-(semi)groups, Stone's theorem, evolution families, Fock space}%


\maketitle

\begin{abstract}
We study certain dynamical systems which leave invariant an indefinite quadratic form via semigroups or evolution families of complex symmetric Hilbert space operators. In the setting of bounded operators we show that a $\calC$-selfadjoint operator generates a contraction $C_0$-semigroup if and only if it is dissipative. In addition, we examine the abstract Cauchy problem for nonautonomous linear differential equations
possessing a complex symmetry. In the unbounded operator framework we isolate the class of \emph{complex symmetric, unbounded semigroups} and investigate Stone-type theorems adapted to them. On Fock space realization, we characterize all $\calC$-selfadjoint, unbounded weighted composition semigroups. As a byproduct we prove that the generator of a $\calC$-selfadjoint, unbounded semigroup is not necessarily $\calC$-selfadjoint.
\end{abstract}

\section{Introduction}
\subsection{Complex symmetric operators}
The study of abstract complex symmetric operators is relatively new, originating in the articles \cite{GP1, GP2}, although specific classes of these operators and their spectral behavior were investigated for several decades. Notable in this respect is the non-hermitian quantum mechanics formalism; its hamiltonians are complex symmetric, non-selfadjoint in the classical sense,  but have real spectrum. See \cite{GPP} for a survey of both abstract and concrete features of the analysis of complex symmetric operators.

We start by recalling some basic terminology, illustrated by a couple of examples.
\begin{defn}
An unbounded, linear operator $T$ is called \emph{$\calC$-symmetric} on a separable, complex Hilbert space $\calH$ if there exists a conjugation $\calC$ (i.e. an anti-linear, isometric involution) such that
\begin{equation}\label{C-symmetry}
\langle \calC x,Ty\rangle=\langle \calC Tx,y\rangle,\quad\forall x,y\in\text{dom}(T).
\end{equation}
\noindent Note that for a densely defined operator $T$, its adjoint $T^*$ is well-defined, and so the identity \eqref{C-symmetry} means that $T\preceq \calC T^*\calC$. A densely defined, linear operator $S$ is called \emph{$\calC$-selfadjoint} if it satisfies $S=\calC S^*\calC$.
\end{defn}

Alternatively, and closer to the quantum physics formalism, a $\calC$-symmetric operator $T$ leaves invariant the complex bilinear form $[x,y] = \langle x, C y \rangle$:
$$ [Tx, y] = [x, Ty], \ \ x,y \in dom(T).$$

Normal bounded operators, Toeplitz and Hankel finite matrices, Jordan forms, are all complex symmetric, with respect to an adapted and highly relevant conjugation. Moreover, every unitary transform of a Hilbert space is the product of two conjugations, and this observation enters deeply into the spectral structure of any $\calC$-selfadjoint operator \cite{GP1}.

As non-trivial examples we mention the  $\mathcal{PT}$-symmetric operators, a class advocated by Bender and Boettcher \cite{bender1998real} for its relevance to quantum mechanics. Roughly speaking, $\mathcal{PT}$-symmetric operators are those operators on Lebesgue space $L^2(\mathbf{R})$ complex symmetric with respect to the conjugation
$$
\mathcal{PT}f(x)=\overline{f(-x)},\quad f\in L^2(\mathbf{R}).
$$
The conjugation $\mathcal{PT}$ is the product of the \emph{time-reversal operator} $\mathcal{T}$, acting as $\mathcal{T}f(x)=\overline{f(x)}$ and the \emph{parity operator} $\mathcal{P}$, acting as $\mathcal{P}f(x)=f(-x)$. Nowadays the analysis of quantum systems possessing a $\mathcal{PT}$-symmetry is blooming. A special issue dedicated to this subject was published in the \emph{Journal of Physics A: Mathematical and Theoretical} vol. 45, No. 44 (2012) see \cite{bender2012quantum} for guiding light. See also the survey \cite{Znojil-2015} and the full Wiley volume it opens as manifesto.

The bibliography devoted to time dependent and evolution of non-hermitian hamiltoninans is meagre compared to the studies of stationary $\mathcal{PT}$-systems. In general a similarity to a hermitian operator is explored, along formal manipulations, not always bounded or convergent \cite{Miao-Xu,Scolarici}. So far, perturbation theory methods in Hilbert or Krein space offer the most rigorous approach to time evolution studies in this area \cite{Faria}. A notable inverse problem related to time dependent $\mathcal{PT}$-symmetric quantum systems was recently published \cite{Tamilselvan}.

In a recent work \cite{hai2018complex} we outlined a natural link between $\mathcal{PT}$-symmetric operators on Lebesgue space and linear differential operators acting on Fock space. More precisely, we realized the canonical conjugation $\mathcal{PT}$ as  $\mathcal{J}f(z)=\overline{f(\overline{z})}$ acting on Fock space. Via this dictionary we proved that a linear differential operator is $\mathcal{PT}$-selfadjoint on Lebesgue space if and only if it is unitarily equivalent to a linear combination with complex coefficients of operators of the form
$$ z^j [\frac{\partial}{\partial z}]^p + (-1)^{j+p}  z^p [\frac{\partial}{\partial z}]^j.$$
The above sum is endowed with the maximal domain of definition. 

\subsection{Bounded $C_0$-semigroups}
The theory of semigroups of bounded, linear operators has its origin in Stone's theorem on groups of unitary operators (or simply: \emph{unitary groups}). This theorem was motivated by the time-dependent solution of Schr\"odinger's equation in quantum mechanics (see \cite{KS}). The study of semigroups, rather than groups, was carried out in 1936 by Hille with the inspiration coming from the semigroup properties of certain classical singular integrals. However it was not until 1948 that the applicability of semigroups was fully appreciated. At that time Yosida used the semigroup theory to investigate diffusion equations. The same framework turned out to be relevant for Cauchy's problem for the wave equation and for investigating evolution equations. In the 1970s and 80s, thanks to the efforts of many research groups, this theory has reached maturity, see the monograph \cite{EN}. Today semigroups of operators are essential components of toolboxes of applied mathematicians working on integro-differential equations, functional differential equations, stochastic analysis or control theory.

\begin{defn}\label{defse}
A family $(U(s))_{s \geq 0}$ of \emph{bounded} linear operators on a Banach space $\X$, is called a \emph{strongly continuous semigroup} (or simply: \emph{bounded $C_0$-semigroup}), if
\begin{enumerate}
\item $U(0)=I$, the identity operator on $\X$;
\item $U(t+s) = U(t)U(s)$, $\forall t, s\geq 0$;
\item $\lim\limits_{s \to 0^+}U(s) x =x$, $\forall x \in \X$.
\end{enumerate}
\end{defn}
We remark the reader that in the above definition the terminology `bounded' means that each operator $U(t)$ is bounded. Furthermore, if there exists a constant $M>0$ such that
$$
\|U(s)\|\leq M,\quad\forall s\geq 0,
$$
then $(U(s))_{s \geq 0}$ is called \emph{uniformly bounded}. In particular with $M=1$, it is called a \emph{contraction}.
\begin{defn}\label{defgense}
The \emph{generator} $A\colon \text{dom}(A)\subseteq \X\to \X$ of a bounded $C_0$-semigroup $(U(s))_{s \geq 0}$ is defined by
$$
Ax=\lim\limits_{s\to0^+}\dfrac{U(s)x-x}{s},
$$
with the domain of definition
$$
\text{dom}(A)=\left\{x\in \X: \lim\limits_{s\to0^+}\frac{U(s)x-x}{s}\ \text{exists}\right\}.
$$
\end{defn}
Likewise, for $t,s\in\R$, we can extend Definitions \ref{defse}-\ref{defgense} to $C_0$-groups and their generators. If $(U(t))_{t \geq 0}$ is a bounded $C_0$-semigroup on a Banach space $\X$, then the family $(U(t)^*)_{t \geq 0}$, where for each $t\geq 0$, $U(t)^*$ is the dual (henceforth named however {\it adjoint}) of the operator $U(t)$, is called the \emph{bounded adjoint semigroup} acting on the dual space $\X^*$. In general, the family $(U(t)^*)_{t \geq 0}$ is not necessarily a bounded $C_0$-semigroup, but on a Hilbert space it is again a bounded $C_0$-semigroup (see \cite{partington2004linear}). Although the theory of bounded $C_0$-semigroups continued to develop rapidly, what concerns bounded adjoint semigroups, so far as we know, there seems to be very few works. The general theory of bounded adjoint semigroups was first studied systematically by Phillips \cite{phillips1955adjoint}, whose results are presented in the book of Hille and Phillips \cite{hille1996functional}, and was developed a little later by de Leeuw \cite{de1960adjoint}. Before that, Feller \cite{feller1953semi} had already used adjoint semigroups to investigate partial differential equations.

Aiming at a Stone type theorem, the first author isolated in \cite{HK4} the class of \emph{complex symmetric, bounded semigroups} acting on Hilbert space, and studied these semigroups on Fock space
realization. For the purpose of the present work, it is to recall some technical definitions.
\begin{defn}\label{defn-bff-cso-semi}
A bounded $C_0$-(semi)group $(T(t))_{t\geq 0}$ is called 

(i) \emph{$\calC$-symmetric} if each operator $T(t)$ is $\calC$-symmetric in the sense of linear bounded operators,

(ii) \emph{complex symmetric} if there exists some conjugation independent of $t$, such that it is $\calC$-symmetric.
\end{defn}

What make the complex symmetric, bounded $C_0$-groups interesting is the fact that they are generalizations of unitary groups; namely for a given unitary group $(T(t))_{t\in\mathbf{R}}$, one always finds a conjugation $\calC$ (independent of the parameter $t$) such that $T(t)=\calC T(t)^*\calC$ for all $t\in\mathbf{R}$. The reader can refer to \cite[Proposition 2.6]{HK4} for a detailed proof. The validity of a Stone-type theorem for complex symmetric, bounded $C_0$-(semi)groups is therefore in order. Indeed, if a bounded $C_0$-semigroup is $\calC$-symmetric, then its generator is $\calC$-selfadjoint in the sense of unbounded operators. For a proof see \cite[Theorem 2.4]{HK4}.

The present paper addresses a similar question for complex symmetric \emph{unbounded} $C_0$-semigroups with the expected conclusion that the unbounded operators framework is much more delicate to delineate.

\subsection{Evolution families of bounded linear operators}
The theory of operator semigroups emerged from the study of autonomous linear differential equations on infinite-dimensional Banach spaces. If an unbounded, linear operator $A$ generates a $C_0$-semigroup $(S(t))_{t\geq 0}$ (i.e. $A$ satisfies conditions in the Hille-Yosida theorem), then for each $x\in\text{dom}(A)$ the autonomous equation
$$
u'(t)=Au(t),\quad u(0)=x
$$
has a unique solution and furthermore this solution is given by $u(t)=S(t)x$.

Nonautonomous linear differential equations of the form
\begin{equation}\label{ACPt}
v'(t)=B(t)v(t),\quad v(s)=x\in\text{dom}[B(s)],\quad t\geq s\geq 0,
\end{equation}
where the domain $\text{dom}[B(s)]$ is assumed to be dense in a Hilbert space $\calH$, are also natural and widely referred to in applications.  

In this context, the following recent example investigated in \cite{BB-Phy} is relevant for our inquiry.

\begin{exa}[\cite{BB-Phy}]
Consider the time dependent non-hermitian Hamiltonian
$$
H(t)=\nu I+i\kappa(t)\sigma_z+\dfrac{\lambda(t)}{2}(\sigma_++\sigma_-),
$$
where $\nu$ is a real constant, and $\kappa,\lambda$ are real and continuous functions of $t$,
$$
\sigma_z=
  \left( {\begin{array}{cc}
   1 & 0 \\
   0 & -1 \\
  \end{array} } \right),\quad
\sigma_+=
  \left( {\begin{array}{cc}
   0 & 2 \\
   0 & 0 \\
  \end{array} } \right),\quad
\sigma_-=
  \left( {\begin{array}{cc}
   0 & 0 \\
   2 & 0 \\
  \end{array} } \right).
$$
Motivated by a quantum mechanical problem, Bagchi \cite{BB-Phy} established some conditions for the solutions of the equation $iv'(t)=H(t)v(t)$ to be of the form
$$v(t)=e^{-ia(t)}e^{ib(t)\sigma_+}e^{ic(t)\sigma_-}e^{d(t)\sigma_z}x.$$
Here $a,b,c$ are unknown real functions, to be determined. 
\end{exa}

Two observations stand out related to this example: 

1) The operators $H(t)$ are not hermitian, but they possess a $\mathcal{PT}$-symmetry.

2) The operator $e^{-ia(t)}e^{ib(t)\sigma_+}e^{ic(t)\sigma_-}e^{d(t)\sigma_z}$ is again $\mathcal{PT}$-symmetric. 

These features suggest a Stone type theorem holds when additional complex symmetry is present. It should be noted that the original Stone theorem and its recent extension \cite{HK4} to complex symmetric operators were so far proved only for (semi)groups; in other words, they refer to autonomous linear differential equations. This prompts us to examine below extensions of Stone's theorem to the nonautonomous case.

The leverage of the principal notions appearing in this article (complex symmetry, semigroups of operators, Cauchy problem) is clarified by their Fock space realization. A second half of our work is devoted to this interpretation.


\section{Outline and contents}
When dealing with bounded linear operators we show in Theorem \ref{bsdsbsbaf} that a $\calC$-selfadjoint operator generates a contraction $C_0$-semigroup if and only if it is dissipative. Theorem \ref{Ston-thm-nonauto} is an extension of Stone's theorem to the case of nonautonomous linear differential equations.

In the unbounded setting, we introduce the class of :\emph{complex symmetric, unbounded semigroups} and establish the following main results:

(i) A Stone-type theorem asserting that a $\calC$-selfadjoint unbounded $C_0$-semigroup possesses a $\calC$-symmetric generator (Theorems \ref{stone-type-thm1}-\ref{stone-1}). 

(ii)  In Fock space we characterize all $\calC$-selfadjoint, unbounded weighted composition semigroups. We derive the negative conclusion that the generator of a $\calC$-selfadjoint, unbounded semigroup is not necessarily $\calC$-selfadjoint. This example also implies that  if the operator $A$ generates an unbounded $C_0$-semigroup, then not always the adjoint $A^\ast$ generates the adjoint semigroup.

The rest of the paper is organized as follows. Section \ref{csbs} is devoted to studying complex symmetric, \emph{bounded} semigroups. Section \ref{csef} contains a brief detour through the nonautonomous linear differential equations enhanced by a complex symmetry.

To start the study in the unbounded case, we record in Section \ref{pre-unbdd-se} some preliminaries on unbounded $C_0$-semigroups. Stone-type theorems for complex symmetric, \emph{unbounded} semigroups are proved in Section \ref{sec4}. Our next task is to investigate complex symmetric, \emph{unbounded} semigroups on the Fock space of entire functions. To that aim, we recall in Section \ref{Fock-sp} some technical aspects related to Fock space. In Section \ref{last-sec}, we consider a family $(W_{\xi,\zeta}(t))_{t\geq 0}$ (often called as \emph{weighted composition semigroup}) of unbounded operators, where each $W_{\xi,\zeta}(t)$ is an unbounded weighted composition operator induced by the semiflow $(\zeta_t)_{t\geq 0}$ and the corresponding semicocycle $(\xi_t)_{t\geq 0}$; namely,
\begin{equation}\label{semif-wco}
(W_{\xi,\zeta}(t)f)(z):=\xi_t(z)f(\zeta_t(z)),\ z\in\mathbf{C}.
\end{equation}
The aim of this section is to characterize the family $(W_{\xi,\zeta}(t))_{t\geq 0}$ when it is a $\calC$-selfadjoint, unbounded semigroup on Fock space with respect to some weighted composition conjugation. The computation of generators of these semigroups is carried out in detail.

\section*{Notations}
We let $\mathbf{N}$, $\mathbf{R}$, $\mathbf{R}_+$ and $\mathbf{C}$ denote the sets of non-negative integers, real numbers, non-negative real numbers and complex numbers, respectively. The domain of an unbounded operator is denoted as $\text{dom}(\cdot)$ or $\text{dom}[\cdot]$. For two unbounded operators $X,Y$, the notation $X\preceq Y$ means that $X$ is the \emph{restriction} of $Y$ on the domain $\text{dom}(X)$; namely
$$
\text{dom}(X)\subseteq\text{dom}(Y),\quad Xz=Yz,\,\forall z\in\text{dom}(X).
$$
The product $XY$ is defined by
$$
\text{dom}(XY)=\{z\in\text{dom}(Y):Yz\in\text{dom}(X)\},\quad XYz=X(Yz),\,\forall z\in\text{dom}(XY).
$$
For a family $(F(t))_{t\in I}$ of unbounded, linear operators, we use the following symbols
$$D(F)=\bigcap_{t\in I}\text{dom}[F(t)],\quad\calD(F)=\bigcap_{t,s\in I}\text{dom}[F(t)F(s)].$$

\section{Complex symmetric, bounded semigroups}\label{csbs}
Before touching the unbounded case, we discuss some properties of the complex symmetric, bounded semigroups. We establish first a connection between these semigroups and the class of dissipative operators. This quest is motivated by the following reason: the generator of a contraction $C_0$-semigroup is maximal dissipative. Conversely, every operator with this property generates a contraction $C_0$-semigroup (see \cite[Chapter II: Theorem 3.5]{EN}). For the convenience of the reader, we recall some terminology.
\begin{defn}
A linear operator $A:\text{dom}(A)\subseteq\calH\to\calH$ is called 
\begin{enumerate}
\item \emph{maximal dissipative} if $(0,\infty)\subset\rho(A)$ and
\begin{equation}\label{cond-c-self-fhd}
\|\alpha R(\alpha,A)\|\leq 1,\quad\forall\alpha>0.
\end{equation}
\item \emph{dissipative} if
\begin{equation}\label{cond-c-self}
\|(\alpha I-A)z\|\geq\alpha\|z\|,\quad\forall\alpha>0,\forall z\in\text{dom}(A).
\end{equation}
\end{enumerate}
\end{defn}

\begin{rem}
It is well-known that every maximal dissipative operator is densely defined and closed (see \cite[Propositions 3.1.6 \& 3.1.11]{tucsnak2009observation}).
\end{rem}

\begin{prop}\label{kgfjd}
Let $A$ be a $\calC$-selfadjoint operator, and $\calC$ be a conjugation on a separable, complex Hilbert space. Then the operator $A$ is maximal dissipative if and only if it is dissipative.
\end{prop}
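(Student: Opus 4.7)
The plan is to handle the two directions separately, noting that the forward implication is essentially a matter of unwinding the definition, while the converse is where the $\calC$-selfadjointness is essential.

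For the forward direction, assume $A$ is maximal dissipative. Given $\alpha>0$ and $z\in\text{dom}(A)$, set $w=(\alpha I-A)z$ so that $z=R(\alpha,A)w$. The bound $\|\alpha R(\alpha,A)\|\leq 1$ then gives $\alpha\|z\|=\|\alpha R(\alpha,A)w\|\leq\|w\|=\|(\alpha I-A)z\|$, which is exactly dissipativity.

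The substantive direction is converse. Suppose $A$ is dissipative; to promote this to maximal dissipativity, I must show $(0,\infty)\subset\rho(A)$, after which the resolvent bound will follow at once by applying dissipativity to $z=R(\alpha,A)w$. Fix $\alpha>0$. Dissipativity immediately yields that $\alpha I-A$ is injective, and since a $\calC$-selfadjoint operator is closed (being a $\calC$-conjugate of the always-closed $A^*$), the estimate $\|(\alpha I-A)z\|\geq\alpha\|z\|$ forces $\text{range}(\alpha I-A)$ to be closed. What remains is surjectivity, and this is where complex symmetry enters. Since $A$ is densely defined and closed,
$$
\text{range}(\alpha I-A)^{\perp}=\ker\bigl((\alpha I-A)^*\bigr)=\ker(\alpha I-A^*),
$$
the last equality because $\alpha\in\R$. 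Now I invoke $A^*=\calC A\calC$: for any $x$,
$$
(\alpha I-A^*)x=\calC(\alpha I-A)\calC x,
$$
so $\ker(\alpha I-A^*)=\calC\,\ker(\alpha I-A)=\calC\{0\}=\{0\}$, where the middle equality uses the bijectivity and involutive nature of $\calC$ together with the already-established injectivity of $\alpha I-A$. Hence $\text{range}(\alpha I-A)$ is dense, and combined with closedness of the range I conclude $\alpha I-A$ is bijective, so $\alpha\in\rho(A)$.

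With $\rho(A)\supset(0,\infty)$ established, the resolvent identity and dissipativity give $\|\alpha R(\alpha,A)w\|\leq\|w\|$ for all $w\in\calH$ by setting $z=R(\alpha,A)w$ in \eqref{cond-c-self}, completing the verification of \eqref{cond-c-self-fhd}. The main technical pivot, and the only place where the hypothesis is truly used, is the identification $\ker(\alpha I-A^*)=\calC\ker(\alpha I-A)$; this is the mechanism by which the $\calC$-selfadjointness converts the one-sided bound of dissipativity into the two-sided bijectivity required for $\alpha$ to lie in the resolvent set.
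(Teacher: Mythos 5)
Your proof is correct and follows essentially the same route as the paper: injectivity and closed range of $\alpha I-A$ from the dissipativity estimate, then surjectivity by showing $\ker(\alpha I-A^*)=\{0\}$ via the $\calC$-selfadjointness relation $A^*=\calC A\calC$, which transfers any eigenvector of $A^*$ at $\alpha$ to an eigenvector of $A$ at $\alpha$ and hence to zero. The paper phrases this last step as a contradiction with explicit inner products rather than through the identity $\operatorname{range}(\alpha I-A)^{\perp}=\ker(\alpha I-A^*)$, but the mechanism is identical.
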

\begin{proof}
It is enough to show the implication $\Longleftarrow$. Indeed, we suppose that it holds that \eqref{cond-c-self}. Fix $\alpha>0$. It is easy to check that the operator $\alpha I-A$ is injective, and the image $\im (\alpha I-A)$ is closed. We show by contradiction that $\im (\alpha I-A)=\calH$. Assume that there exists some non-zero $x\in\calH\setminus\im (\alpha I-A)$ such that
$$
\langle x,(\alpha I-A)z\rangle=0,\quad\forall z\in\text{dom}(A).
$$
Equivalently, for every $z\in\text{dom}(A)$ we have
$$
\langle Az,x\rangle=\langle z,\alpha x\rangle,
$$
which implies that $x\in\text{dom}(A^*)$ and $A^*x=\alpha x$. Since the operator $A$ is $\calC$-selfadjoint, we have $\calC x\in\text{dom}(A)$ and $A\calC x=\alpha\calC x$. Again by \eqref{cond-c-self}, we conclude that $\calC x=0$, or equivalently $x=0$; but this is impossible. Thus, we infer that the operator $\alpha I-A$ is invertible with $\|R(\alpha,A)\|\leq \alpha^{-1}$.
\end{proof}

With the help of Proposition \ref{kgfjd}, we can give a characterization for a $\calC$-selfadjoint operator when it generates a contraction $C_0$-semigroup.
\begin{thm}\label{bsdsbsbaf}
Let $A$ be a $\calC$-selfadjoint operator, where $\calC$ is a conjugation on a separable, complex Hilbert space. Then the following assertions are equivalent.
\begin{enumerate}
\item The operator $A$ generates a contraction $C_0$-semigroup $(T(t))_{t\geq 0}$.
\item The operator $A$ is dissipative.
\end{enumerate}
Furthermore, $(T(t))_{t\geq 0}$ is $\calC$-symmetric.
\end{thm}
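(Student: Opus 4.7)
\emph{Plan of proof.} The equivalence of (1) and (2) will be obtained by combining Proposition \ref{kgfjd} with a classical generation theorem, and the supplementary $\calC$-symmetry of $(T(t))_{t\geq 0}$ will follow from a uniqueness argument at the level of generators.

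The direction (1)$\Rightarrow$(2) is immediate from the Hille--Yosida characterization: if $A$ generates a contraction $C_0$-semigroup, then $(0,\infty)\subset\rho(A)$ with $\|R(\alpha,A)\|\leq\alpha^{-1}$, which is precisely the estimate \eqref{cond-c-self}. For the converse (2)$\Rightarrow$(1), the plan is to upgrade dissipativity to maximal dissipativity by invoking Proposition \ref{kgfjd}, and then apply the Lumer--Phillips / Hille--Yosida generation theorem from \cite{EN}. The technical prerequisites (dense domain, closedness of $A$) come for free, as noted in the remark preceding Proposition \ref{kgfjd}: every maximal dissipative operator is automatically densely defined and closed.

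For the $\calC$-symmetry of the resulting semigroup, I would introduce the auxiliary family
$$
S(t):=\calC T(t)\calC,\qquad t\geq 0.
$$
Since $\calC$ is an antilinear isometric involution, each $S(t)$ is a bounded \emph{linear} operator on $\calH$. A straightforward verification shows that $(S(t))_{t\geq 0}$ is a $C_0$-semigroup: $S(0)=I$ and the semigroup law use $\calC^2=I$, while strong continuity follows from isometry of $\calC$ and strong continuity of $(T(t))_{t\geq 0}$. I would then compute its generator by noting that, thanks to antilinearity of $\calC$ and the fact that the difference quotient is divided by the real scalar $t$,
$$
\frac{S(t)x-x}{t}=\calC\left(\frac{T(t)\calC x-\calC x}{t}\right),
$$
so the limit as $t\to 0^+$ exists exactly when $\calC x\in\text{dom}(A)$. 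Using $\calC$-selfadjointness, $\calC\cdot\text{dom}(A)=\text{dom}(A^*)$, and the generator of $(S(t))_{t\geq 0}$ is $\calC A\calC=A^*$. On a Hilbert space the adjoint semigroup $(T(t)^*)_{t\geq 0}$ is itself a contraction $C_0$-semigroup with generator $A^*$, so the uniqueness of the semigroup attached to a given generator forces $S(t)=T(t)^*$, i.e.,
$$
T(t)=\calC T(t)^*\calC,\qquad t\geq 0,
$$
which is exactly $\calC$-symmetry of $(T(t))_{t\geq 0}$.

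The only genuinely delicate step is bookkeeping the antilinearity of $\calC$ while computing the generator of $(\calC T(t)\calC)_{t\geq 0}$; beyond this, the argument rests entirely on Proposition \ref{kgfjd} together with the standard Lumer--Phillips theorem, so I expect no serious obstacle.
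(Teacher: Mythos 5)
Your proof is correct. For the equivalence of (1) and (2) you follow exactly the paper's route: Proposition \ref{kgfjd} upgrades dissipativity to maximal dissipativity for a $\calC$-selfadjoint operator, and the classical generation theorem from \cite{EN} does the rest; the converse direction via the Hille--Yosida resolvent estimate is the standard half of that same theorem. Where you diverge is the ``furthermore'' clause: the paper simply cites \cite[Theorem 2.2]{HK4} for the $\calC$-symmetry of $(T(t))_{t\geq 0}$, whereas you supply a self-contained argument by forming $S(t)=\calC T(t)\calC$, checking it is a $C_0$-semigroup, computing its generator to be $\calC A\calC=A^*$ (the bookkeeping with antilinearity and the real parameter $t$ is handled correctly, and the identification $\calC\,\text{dom}(A)=\text{dom}(A^*)$ is exactly what $\calC$-selfadjointness gives), and then invoking the facts that on a Hilbert space $(T(t)^*)_{t\geq 0}$ is a $C_0$-semigroup with generator $A^*$ and that a $C_0$-semigroup is determined by its generator. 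This conjugation-plus-uniqueness argument is sound and makes the theorem self-contained at the cost of a page of routine verification; the paper's citation is shorter but leaves the reader to consult \cite{HK4}. No gaps.
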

\begin{proof}
The equivalence follows from Proposition \ref{kgfjd}, while the complex symmetry of $(T(t))_{t\geq 0}$ follows from \cite[Theorem 2.2]{HK4}.
\end{proof}

Note that an isometric $C_0$-group is unitary and so by \cite[Proposition 2.6]{HK4} we have the following.
\begin{prop}\label{iso-group-cso}
Every isometric $C_0$-group is complex symmetric.
\end{prop}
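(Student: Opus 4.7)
The plan is to reduce to the already-quoted Proposition~2.6 of \cite{HK4}, which states that every unitary $C_0$-group on a separable complex Hilbert space admits a conjugation $\calC$, independent of the parameter $t$, for which $T(t)=\calC T(t)^{\ast}\calC$ for all $t\in\mathbf{R}$. Thus the whole content of the statement is to upgrade ``isometric'' to ``unitary'' for a $C_0$-group, after which the complex symmetry is inherited for free.

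First, I would use the group law. Let $(T(t))_{t\in\mathbf{R}}$ be an isometric $C_0$-group. For every $t\in\mathbf{R}$ one has $T(t)T(-t)=T(-t)T(t)=T(0)=I$, so each $T(t)$ is a bijection of $\calH$ onto itself with inverse $T(-t)$. Both $T(t)$ and $T(-t)$ are isometries by hypothesis, hence bounded with norm one.

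The second step is the observation that a surjective isometry on a complex Hilbert space is unitary. Indeed, for every $x,y\in\calH$ the polarization identity together with $\|T(t)z\|=\|z\|$ for all $z$ gives $\langle T(t)x,T(t)y\rangle=\langle x,y\rangle$, which combined with surjectivity yields $T(t)^{\ast}T(t)=T(t)T(t)^{\ast}=I$. Therefore each $T(t)$ is unitary, so $(T(t))_{t\in\mathbf{R}}$ is in fact a unitary $C_0$-group.

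At that stage I would simply invoke \cite[Proposition~2.6]{HK4}, referenced in the introduction, to produce a single conjugation $\calC$ such that $T(t)=\calC T(t)^{\ast}\calC$ for every $t\in\mathbf{R}$, which is precisely the complex symmetry of the group in the sense of Definition~\ref{defn-bff-cso-semi}(ii). No step looks delicate here; the only point one has to be careful about is not to conflate an ``isometry'' on a Hilbert space (which need not be surjective) with a unitary, and that is exactly where the group structure is used.
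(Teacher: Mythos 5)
Your proposal is correct and follows exactly the paper's own route: the paper also observes that an isometric $C_0$-group is automatically unitary (via the group law forcing surjectivity) and then invokes \cite[Proposition~2.6]{HK4}. You have merely spelled out the details that the paper leaves implicit.
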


We end the paper with a sufficient condition for isometric $C_0$-semigroups to be complex symmetric.
\begin{cor}
Let $(T(t))_{t\geq 0}$ be an isometric $C_0$-semigroup generated by the operator $A$. If the inclusion $\sigma(A)\subset i\mathbf{R}$ holds, then $(T(t))_{t\geq 0}$ is complex symmetric.
\end{cor}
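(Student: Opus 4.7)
The plan is to promote the isometric semigroup to a unitary group, and then invoke Proposition \ref{iso-group-cso} directly. The bridge between the two is the spectral criterion for self-adjointness of a closed symmetric operator.

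First, I would extract skew-symmetry of the generator from the isometry condition. Since $\|T(t)x\|^2=\|x\|^2$ for every $x\in\calH$, differentiating at $t=0^+$ on $\text{dom}(A)$ gives $2\,\mathrm{Re}\langle Ax,x\rangle=0$, i.e. $A$ is skew-symmetric: $-A\preceq A^{\ast}$. Equivalently, $B:=iA$ is a closed, densely defined symmetric operator on $\calH$. The closedness is automatic since generators of $C_0$-semigroups are closed.

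Next, I would translate the spectral hypothesis: $\sigma(A)\subset i\mathbf{R}$ is equivalent to $\sigma(B)\subset\mathbf{R}$. A standard criterion (see e.g.\ \cite[Ch.~VIII]{KS}) says that a closed symmetric operator $B$ is self-adjoint if and only if $\sigma(B)\subset\mathbf{R}$; in particular both deficiency indices vanish, so $B=B^{\ast}$. Hence $A^{\ast}=-iB^{\ast}=-iB=-A$, i.e. $A$ is skew-adjoint. By the classical Stone theorem, $A$ is then the generator of a unitary $C_0$-\emph{group} $(\tilde T(t))_{t\in\mathbf{R}}$ on $\calH$, and uniqueness of the generator on $\mathbf{R}_{+}$ forces $\tilde T(t)=T(t)$ for all $t\geq 0$. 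In particular each $T(t)$ extends to a unitary operator, and $(T(t))_{t\geq 0}$ is the positive half of an isometric $C_0$-group.

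Finally, Proposition \ref{iso-group-cso} applied to $(\tilde T(t))_{t\in\mathbf{R}}$ furnishes a single conjugation $\calC$, independent of the time parameter, such that $\tilde T(t)=\calC \tilde T(t)^{\ast}\calC$ for all $t\in\mathbf{R}$. Restricting to $t\geq 0$ yields the required complex symmetry of $(T(t))_{t\geq 0}$. The main subtlety in this plan is the implication ``closed symmetric with real spectrum $\Longrightarrow$ self-adjoint'': without it, we would only know that $A$ is skew-symmetric and not that it is skew-adjoint, and then Stone's theorem would not be available to upgrade the semigroup to a unitary group. Everything else is a routine assembly.
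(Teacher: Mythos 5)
Your proof is correct, but it takes a genuinely different route from the paper. The paper's argument is a two-line citation: it invokes \cite[Chapter IV: Lemma 2.19]{EN} to extend the isometric semigroup to an isometric $C_0$-group, and then applies Proposition \ref{iso-group-cso}. You instead re-derive that extension step from first principles: differentiating $\|T(t)x\|^2$ gives $\re\langle Ax,x\rangle=0$ on $\mathrm{dom}(A)$, polarization upgrades this to skew-symmetry of $A$ (equivalently, symmetry of $iA$), the hypothesis $\sigma(A)\subset i\mathbf{R}$ places $\pm i$ in $\rho(iA)$ so both deficiency indices of the closed symmetric operator $iA$ vanish and $A$ is skew-adjoint, and the classical Stone theorem plus uniqueness of the semigroup generated by $A$ identifies $(T(t))_{t\geq 0}$ with the positive half of a unitary group; the final appeal to Proposition \ref{iso-group-cso} is common to both arguments. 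What your approach buys is self-containedness (no reliance on the Engel--Nagel spectral lemma) and an explicit identification of $A$ as skew-adjoint, which is more information than the corollary asks for. What it costs is that the deficiency-index criterion is intrinsically a Hilbert-space argument, whereas the cited Engel--Nagel lemma yields the isometric group extension already on Banach spaces; this is immaterial here, since complex symmetry only makes sense on a Hilbert space, but it explains why the paper can afford the shorter citation. All the individual steps you flag as delicate --- in particular that a closed symmetric operator with real spectrum is self-adjoint --- are standard and correctly applied.
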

\begin{proof}
By \cite[Chapter IV: Lemma 2.19]{EN}, $(T(t))_{t\geq 0}$ can be extended to an isometric $C_0$-group. By Proposition \ref{iso-group-cso}, we obtain the desired conclusion.
\end{proof}


\section{Complex symmetric evolution families}\label{csef}

Turning the page to non-autonomous linear differential equations, we first recall some basic terminology well explained in the monograph \cite{EN}.

\begin{defn}\label{defn-evo-fa}
A family $\{U(t,s)\}_{t\geq s\geq 0}$ of bounded, linear operators on a Hilbert space $\calH$ is called an \emph{evolution family} if
\begin{enumerate}
\item $U(t,t)=I$, $\forall t\geq 0$;
\item $U(t,s)=U(t,r)U(r,s)$, $\forall t\geq r \geq s$;
\item for each $x\in\Bbb\calH$, the mapping $(t,s)\mapsto U(t,s)x$ is continuous.
\end{enumerate}
\end{defn}

It is clear that a bounded $C_0$-semigroup $(S(t))_{t\geq 0}$ gives rise to the corresponding evolution family by setting $U(t,s)=S(t-s)$. In other terms, an evolution family $\{V(t,s)\}_{t\geq s\geq 0}$ satisfying $V(t-s,0)=V(t,s)$ arises from the bounded $C_0$-semigroup $Q(t)=V(t,0)$.

\begin{defn}
Let $s\geq 0$ and $x\in\text{dom}[B(s)]$. Then the function $v:[s,\infty)\to\calH$ is called a \emph{classical solution} of the abstract Cauchy problem \eqref{ACPt} if it is continuously differentiable such that $v(t)\in\text{dom}[B(t)]$ for every $t\geq s$ and it satisfies \eqref{ACPt}.
\end{defn}

Although both arising from linear differential equations, properties of evolution families are quite different from properties of semigroups. We recall a few differences. As mentioned, if $(S(t))_{t\geq 0}$ is a bounded $C_0$-semigroup on a Hilbert space, then its adjoint semigroup is again a bounded $C_0$-semigroup. In contrast, the family $\{U(t,s)^*\}_{t\geq s\geq 0}$ of adjoint operators is not an evolution family. The reader may wish to prove this claim by checking the axiom (2) of Definition \ref{defn-evo-fa}. The next difference lies on the existence question. Hille-Yosida theorem offers a characterization of the generator of a $C_0$-semigroup. To our knowledge, this is an open problem for evolution families. In fact, the existence problems was settled only in some special cases. In this section we do not touch the existence question. Instead, we assume that an evolution family is given and then try un unveil its complex symmetry.

Start with an evolution family \emph{solving} for the nonautonomous linear differential equation \eqref{ACPt}. It should be noted that there are various notions of solvability. In this framework, we follow Engel and Nagel \cite{EN}.
\begin{defn}[{\cite[page 479]{EN}}]
The evolution family of operators $\{U(t,s)\}_{t\geq s\geq 0}$ \emph{solves} the abstract Cauchy problem \eqref{ACPt} if there are dense subspaces $\calH_s, s\geq 0$ of $\calH$ such that
\begin{enumerate}
\item for every $t\geq s\geq 0$, the inclusions $U(t,s)\calH_s\subset\calH_t\subset\text{dom}[B(t)]$ hold;
\item for every $s\geq 0$ and every $z\in\calH_s$, the function $U(.,s)z$ is a classical solution of \eqref{ACPt}.
\end{enumerate}
In this case, the abstract Cauchy problem \eqref{ACPt} is called \emph{well-posed}.
\end{defn}
More restrictive definition can be found in \cite{FHO}, where Fattorini requires that $\calH_t=\text{dom}[B(t)]=M$, i.e. $\calH_t$ is independent of $t$.

Although the family $\{U(t,s)^*\}_{t\geq s\geq 0}$ of adjoint operators is not an evolution family, some properties are almost similar to evolution families. It turns out that under some suitable conditions, the function $U(.,s)^*z$ is a solution of the equation $w'(t)=B(t)^*w(t)$ with the initial condition $w(s)=z$. We make precisely this in the proposition below. Recall that for adjoint operators we denote $D(B^*)=\bigcap_{t\geq 0}\text{dom}[B(t)^*]$.
\begin{prop}\label{prop-U*}
Assume that $\{U(t,s)\}_{t\geq s\geq 0}$ solves the abstract Cauchy problem \eqref{ACPt}, and $U(\cdot,\cdot)^*$, $B(\cdot)$, $B(\cdot)^*$ are continuous in strong topology. Furthermore, assume that each operator $B(s)$ is densely defined. If $D(B^*)\ne\{0\}$, then the following properties hold.
\begin{enumerate}
\item For every $t,d\geq s\geq 0$ and every $z\in D(B^*)$, one has
$$
U(t,s)^*z-U(d,s)^*z=\int\limits_d^t U(\tau,s)^*B(\tau)^*z\,d\tau.
$$
\item For every $t\geq s\geq 0$ and every $z\in D(B^*)$, one has
$$
\dfrac{\partial}{\partial t}U(t,s)^*z=U(t,s)^*B(t)^*z,\quad\forall t\geq s.
$$
\item For every $s\geq 0$ and every $z\in D(B^*)$, one has
$$
\dfrac{\partial}{\partial t}U(t,s)^*z\bigg |_{t=s}=B(s)^*z,\quad\forall s\geq 0.
$$
\end{enumerate}
\end{prop}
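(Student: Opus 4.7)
The strategy is to prove (1) by dualizing against the classical solution of the forward equation, and then to deduce (2) and (3) from (1) by routine differentiation. All three statements hinge on the same duality identity, so the work really concentrates in part (1).

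For (1), I would fix $s \geq 0$, $z \in D(B^*)$ and an auxiliary vector $x \in \calH_s$. Because $\{U(t,s)\}$ solves the abstract Cauchy problem \eqref{ACPt}, the function $U(\cdot, s) x$ is a classical solution, so it is continuously differentiable and satisfies $\frac{d}{dt} U(t,s) x = B(t) U(t,s) x$ on $[s, \infty)$. The right-hand side is continuous in $t$ (it is the strong derivative of a $C^1$ vector-valued function), so Bochner integration on $[d, t]$ is legitimate and yields
\[
U(t,s) x - U(d, s) x \;=\; \int_d^t B(\tau) U(\tau, s) x \, d\tau.
\]
Pairing with $z$ and using $U(\tau, s) x \in \calH_\tau \subset \text{dom}[B(\tau)]$ together with $z \in \text{dom}[B(\tau)^*]$, one may shift $B(\tau)$ onto $z$:
\[
\langle U(t,s) x - U(d, s) x, z\rangle \;=\; \int_d^t \langle U(\tau, s) x, B(\tau)^* z\rangle \, d\tau \;=\; \int_d^t \langle x, U(\tau, s)^* B(\tau)^* z\rangle \, d\tau.
\]
The strong-continuity hypotheses on $U(\cdot, \cdot)^*$ and on $B(\cdot)^*$ ensure that $\tau \mapsto U(\tau, s)^* B(\tau)^* z$ is continuous on $[d, t]$ (uniform boundedness on the compact interval plus joint continuity), so the Bochner integral $\int_d^t U(\tau, s)^* B(\tau)^* z \, d\tau$ exists and the scalar integral equals $\langle x, \int_d^t U(\tau, s)^* B(\tau)^* z \, d\tau\rangle$. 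The left-hand side is $\langle x, U(t,s)^* z - U(d,s)^* z\rangle$, and density of $\calH_s$ in $\calH$ gives (1).

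Part (2) then follows by differentiating (1) at $t$ via the fundamental theorem of calculus, which is applicable because the integrand is strongly continuous. Part (3) is the special case $t = s$ of (2), combined with $U(s,s)^* = I^* = I$, giving $B(s)^* z$.

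The main obstacle I anticipate is purely one of bookkeeping: checking that the integrands really are strongly continuous so that the Bochner integrals are well-defined, and that one may legitimately interchange integration with the inner product at every step. These are all consequences of the continuity assumptions and the compactness of $[d, t]$, so no substantive analytic difficulty arises; the proof should go through mechanically once the duality identity is in hand.
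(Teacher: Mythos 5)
Your proposal is correct and follows essentially the same route as the paper: dualize the identity $U(t,s)x-U(d,s)x=\int_d^t B(\tau)U(\tau,s)x\,d\tau$ (valid since $U(\cdot,s)x$ is a classical solution for $x\in\calH_s$) against $z\in D(B^*)$, shift $B(\tau)$ onto $z$, pull the inner product through the integral, and conclude by density of $\calH_s$, with (2) and (3) following by differentiation. The extra care you take in verifying strong continuity of the integrand is a welcome but minor elaboration of what the paper leaves implicit.
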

\begin{proof}
It is clear that the parts (2-3) follows from the first part. We prove the first one as follows. Let $s\geq 0$, $x\in\calH_s$ and $z\in D(B^*)$. It follows from well-posedness that $U(.,s)x$ is a classical solution of the abstract Cauchy problem \eqref{ACPt}. Thus,
\begin{equation}\label{solution-ACPt}
U(t,s)x-U(d,s)x=\int\limits_d^t B(\tau)U(\tau,s)x\,d\tau,\quad\forall t,d\geq s.
\end{equation}
Since the family $\{U(t,s)\}_{t\geq s\geq 0}$ consists of bounded operators, we have
\begin{eqnarray*}
\langle x,U(t,s)^*z-U(d,s)^*z\rangle
&=&\langle U(t,s)x-U(d,s)x,z\rangle\\
&=&\int\limits_d^t \langle B(\tau)U(\tau,s)x,z\rangle\,d\tau\quad\text{(by \eqref{solution-ACPt})}\\
&=&\langle x,\int\limits_d^t U(\tau,s)^*B(\tau)^*z\,d\tau\rangle\quad\text{(as $z\in D(B^*)$)},
\end{eqnarray*}
which gives, as $\calH_s$ is dense, the desired conclusion.
\end{proof}

We are ready to examine an extension of Stone's theorem to the nonautonomous case.
\begin{thm}[Stone-type theorem]\label{Ston-thm-nonauto}
Assume that $\{U(t,s)\}_{t\geq s\geq 0}$ solves the abstract Cauchy problem \eqref{ACPt}, and $U(\cdot,\cdot)^*$, $B(\cdot)$, $B(\cdot)^*$ are continuous in strong topology. Let $\calC$ be a conjugation. Furthermore, assume that each operator $B(s)$ is densely defined. If $\{U(t,s)\}_{t\geq s\geq 0}$ is $\calC$-symmetric in the sense of bounded operators, then the following conclusions hold.
\begin{enumerate}
\item For every $s\geq 0, \ x\in\calH_s, \ y\in\calC(\calH_s)$, one has
$$
\langle B(s)x,y\rangle=\langle x,\calC B(s)\calC y\rangle;
$$
\item If $D(B^*)\ne\emptyset$, then for every $s\geq 0, f\in\calC(D(B^*)), g\in D(B^*)$, one has
$$
\langle f,B(s)^*g\rangle=\langle \calC B(s)^*\calC f,g\rangle.
$$
\end{enumerate}
\end{thm}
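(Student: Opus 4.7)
The plan is to derive both identities by differentiating a scalar form of the $\calC$-symmetry at $t=s$ and then converting conjugation-twisted inner products via the defining property $\langle\calC u,\calC v\rangle=\overline{\langle u,v\rangle}$ (equivalently $\langle a,\calC c\rangle=\overline{\langle\calC a,c\rangle}$).

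First I would unwind the hypothesis. Since every $U(t,s)$ is bounded and $\calC$-symmetric, $U(t,s)=\calC U(t,s)^{*}\calC$, and a short computation using the conjugation identities produces the scalar equation
\begin{equation}
\langle U(t,s)x,y\rangle=\langle U(t,s)\calC y,\calC x\rangle,\qquad x,y\in\calH.\tag{$\star$}
\end{equation}
Taking the Hilbert adjoint of the $\calC$-symmetry (or massaging $(\star)$) yields the companion equation
\begin{equation}
\langle f,U(t,s)^{*}g\rangle=\langle\calC g,U(t,s)^{*}\calC f\rangle,\qquad f,g\in\calH.\tag{$\star\star$}
\end{equation}

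For part (1), I would pick $x\in\calH_{s}$ and $y\in\calC(\calH_{s})$, so that $x$ and $\calC y$ both lie in $\calH_{s}$. Well-posedness then makes both $t\mapsto U(t,s)x$ and $t\mapsto U(t,s)\calC y$ classical solutions, and differentiating $(\star)$ at $t=s$ produces $\langle B(s)x,y\rangle=\langle B(s)\calC y,\calC x\rangle$. One application of the conjugation identity rewrites the right-hand side as $\langle x,\calC B(s)\calC y\rangle$, which is the claim. For part (2), I would choose $g\in D(B^{*})$ and $f\in\calC(D(B^{*}))$, so that $\calC f\in D(B^{*})$. Proposition \ref{prop-U*}(3) then supplies the derivatives $\frac{\partial}{\partial t}U(t,s)^{*}g|_{t=s}=B(s)^{*}g$ and $\frac{\partial}{\partial t}U(t,s)^{*}\calC f|_{t=s}=B(s)^{*}\calC f$. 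Differentiating $(\star\star)$ at $t=s$ gives $\langle f,B(s)^{*}g\rangle=\langle\calC g,B(s)^{*}\calC f\rangle$, and a final application of the conjugation rule converts the right side into $\langle\calC B(s)^{*}\calC f,g\rangle$.

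The only delicate point is ensuring that both sides of each identity are genuinely differentiable at $t=s$, and this is exactly what dictates the domain restrictions in the statement. In part (1) one needs both initial data to be admissible, which forces $y\in\calC(\calH_{s})$. In part (2) the rearrangement encoded in $(\star\star)$ is essential: it trades the potentially non-differentiable object $U(t,s)\calC g$ (since $\calC g$ need not belong to $\calH_{s}$) for $U(t,s)^{*}\calC f$, whose differentiability is guaranteed by $\calC f\in D(B^{*})$ together with Proposition \ref{prop-U*}. Once the differentiability is in place, the rest is routine antilinear bookkeeping.
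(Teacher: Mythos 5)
Your proposal is correct and follows essentially the same route as the paper: both arguments differentiate the scalar form of the $\calC$-symmetry identity at $t=s$, using well-posedness of \eqref{ACPt} to differentiate $U(\cdot,s)$ on $\calH_s$ in part (1) and Proposition \ref{prop-U*} to differentiate $U(\cdot,s)^*$ on $D(B^*)$ in part (2), followed by the antilinear bookkeeping $\langle \calC u,\calC v\rangle=\langle v,u\rangle$. The only cosmetic difference is that you package the symmetry into the identities $(\star)$ and $(\star\star)$ before differentiating, whereas the paper writes the difference quotients directly; your remark on why $(\star\star)$ is needed to avoid the non-differentiable term $U(t,s)\calC g$ matches the paper's use of $U(t,s)\calC h=\calC U(t,s)^*h$.
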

\begin{proof}
(1) Let $s\geq 0$, and $x,z\in\calH_s$. It follows from the well-posedness, that $U(.,s)x$ and $U(.,s)z$ are classical solutions of the abstract Cauchy problem \eqref{ACPt}. Thus,
$$
\dfrac{\partial}{\partial t}U(t,s)x=B(t)U(t,s)x,\quad\dfrac{\partial}{\partial t}U(t,s)z=B(t)U(t,s)z,\quad,\quad\forall t\geq s.
$$
Since $\{U(t,s)\}_{t\geq s\geq 0}$ is $\calC$-symmetric in the sense of bounded operators, for every $t\geq s\geq 0$ we can write
\begin{eqnarray*}
\langle \dfrac{U(t,s)x-U(s,s)x}{t-s},\calC z\rangle
&=&\langle x,\dfrac{U(t,s)^*-U(s,s)^*}{t-s}\calC z\rangle\\
&=&\langle x,\calC\left(\dfrac{U(t,s)z-U(s,s)z}{t-s}\right)\rangle.
\end{eqnarray*}
Letting $t\to s$ in the last equality gives
$$
\langle\dfrac{\partial}{\partial t}U(t,s)x\bigg |_{t=s},\calC z\rangle=\langle x,\calC\left(\dfrac{\partial}{\partial t}U(t,s)z\bigg |_{t=s}\right)\rangle.
$$
Consequently, taking into account that $U(s,s)=I$, we have
$$
\langle B(s)x,\calC z\rangle=\langle x,\calC B(s)z\rangle.
$$
(2) Let $s\geq 0$, and $h,g\in D(B^*)$. Also since $\{U(t,s)\}_{t\geq s\geq 0}$ is $\calC$-symmetric in the sense of bounded operators, for every $t\geq s\geq 0$ we can write
\begin{eqnarray*}
\langle \calC h,\dfrac{U(t,s)^* g-U(s,s)^*g}{t-s}\rangle
&=& \langle \dfrac{U(t,s)\calC h-U(s,s)\calC h}{t-s},g\rangle\\
&=&\langle \calC\left(\dfrac{U(t,s)^*-U(s,s)^*f}{t-s}\right),g\rangle.
\end{eqnarray*}
Letting $t\to s$ in the last equality gives
$$
\langle \calC h,\dfrac{\partial}{\partial t}U(t,s)^*g\bigg |_{t=s}\rangle=\langle \calC\left(\dfrac{\partial}{\partial t}U(t,s)^*h\bigg |_{t=s}\right),g\rangle,
$$
which implies, by Proposition \ref{prop-U*}, that
$$
\langle \calC h,B(s)^*g\rangle=\langle\calC B(s)^*h,g\rangle
$$
and the proof is complete.
\end{proof}

If $B(s)$ is bounded for all fixed $s\geq 0$, then there always exists an evolution family $\{U(t,s)\}_{t\geq s\geq 0}$ solving the abstract Cauchy problem \eqref{ACPt}. Furthermore, the Stone-type theorem in this case is simplified as follows.
\begin{cor}
Assume that $U(\cdot,\cdot)^*$, $B(\cdot)$, $B(\cdot)^*$ are continuous in strong topology. Furthermore, suppose that $B(s)$ is bounded for all fixed $s\geq 0$. Let $\calC$ be a conjugation. If $\{U(t,s)\}_{t\geq s\geq 0}$ is $\calC$-symmetric in the sense of bounded operators, then so is $(B(s))_{s\geq 0}$, i.e.
$$
B(s)=\calC B(s)^*\calC,\quad\forall s\geq 0.
$$
\end{cor}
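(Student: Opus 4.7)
The plan is to deduce this corollary directly from Theorem \ref{Ston-thm-nonauto}, exploiting the fact that boundedness of each $B(s)$ removes all domain technicalities.

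First, I would set up the hypotheses. Since each $B(s)$ is a bounded operator on $\calH$, we have $\text{dom}[B(s)] = \calH$, so the dense subspaces $\calH_s$ witnessing well-posedness (which exist by the preceding existence remark) can be taken to be all of $\calH$; a standard Dyson-type construction of $\{U(t,s)\}$ for bounded $B(\cdot)$ indeed yields a classical solution starting at every $x\in\calH$. Similarly, $\text{dom}[B(s)^\ast] = \calH$, so $D(B^\ast) = \calH \neq \{0\}$. Since $\calC$ is a bijection of $\calH$, we also have $\calC(\calH_s) = \calH$.

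Second, I would apply part (1) of Theorem \ref{Ston-thm-nonauto}, which under the present hypotheses yields
\[
\langle B(s)x, y\rangle = \langle x, \calC B(s) \calC y\rangle, \quad \forall x, y \in \calH, \ \forall s \geq 0.
\]
By the definition of the Hilbert-space adjoint of the bounded operator $B(s)$, the left-hand side also equals $\langle x, B(s)^\ast y\rangle$, so
\[
\langle x, B(s)^\ast y\rangle = \langle x, \calC B(s) \calC y\rangle, \quad \forall x, y \in \calH.
\]
Since $x$ ranges over all of $\calH$, this forces the operator identity $B(s)^\ast = \calC B(s) \calC$. Composing with $\calC$ on both sides and using $\calC^2 = I$ then gives $B(s) = \calC B(s)^\ast \calC$, as claimed.

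The argument is essentially a one-step reduction, and the only subtle point is justifying that $\calH_s$ may be taken as $\calH$; once this is in hand, everything reduces to unwinding the bilinear identity using the definition of the adjoint and the involutivity of $\calC$. The main obstacle, if any, would be that Theorem \ref{Ston-thm-nonauto} produces an identity only on a possibly proper dense subspace, so I would emphasize in the write-up that boundedness of $B(s)$ (and hence of $B(s)^\ast$) eliminates this restriction and lets the inner-product identity be promoted to an operator identity on all of $\calH$.
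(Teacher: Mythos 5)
Your argument is correct and is exactly the intended reduction: the paper states this corollary without proof, and the natural justification is precisely to apply Theorem \ref{Ston-thm-nonauto}(1) with the domain restrictions rendered vacuous by the boundedness of each $B(s)$ (either by taking $\calH_s=\calH$ or by extending the bilinear identity from the dense subspace by continuity), then unwinding the adjoint and $\calC^2=I$. No gaps; this matches the paper's approach.
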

\section{Preliminaries on unbounded $C_0$-semigroups}\label{pre-unbdd-se}
Technically speaking, the semigroups of unbounded, linear operators, proposed by Hughes \cite{RJH} satisfy the semigroup and strong continuity properties on a suitable subspace. Hughes generalized in \cite{RJH} the notion of a generator and proved a Hille-Yosida theorem to this unbounded setting. We recall below the basic definitions and some relevant results which will be referred to in the sequel.
\begin{defn}
A family $(T(t))_{t\geq 0}$ of unbounded, linear operators acting on a Banach space $\X$ is called an \emph{unbounded $C_0$-semigroup} if there exists $x$ such that 
\begin{enumerate}
\item[(A1)] $x\in\calD(T)\setminus\{0\}$;
\item[(A2)] $T(t)T(s)x=T(t+s)x$ for every $t,s\geq 0$;
\item[(A3)] $T(\cdot)x$ is continuous on $(0,\infty)$, and 
\begin{equation}\label{limit=0}
\lim\limits_{t\to 0^+}\|T(t)x-x\|=0.
\end{equation}
\end{enumerate}
Let $\mfd(T)$ denote the set of elements satisfying axioms (A1)-(A3).
\end{defn}

It is clear that $\mfd(T)\subseteq \calD(T)\subseteq D(T)$. From now on, for $\omega\in\mathbf{R}$ and $x\in D(T)$, we denote
\begin{equation}\label{N-Sigma}
N_\omega(x)=\sup\limits_{t\geq 0}e^{-\omega t}\|T(t)x\|,\quad \Sigma_\omega=\{x\in\mfd(T):N_\omega(x)<\infty\}.
\end{equation}
For $\omega\in\mathbf{R}$, $\lambda\in\mathbf{C}$ with $\re\lambda >\omega$ and $x\in\Sigma_\omega$, we define
\begin{equation}\label{J-omega}
J_\lambda^\omega x=\int_0^\infty e^{-\lambda t}T(t)xdt.
\end{equation}
It was proved in \cite[Theorem 2.9]{RJH} that there is a closed linear operator $A^\omega$ in $\Sigma_\omega$ (i.e. it is closed in the $N_\omega$-norm topology) for which the family $\{J_\lambda^\omega:\re\lambda >\omega\}$ on $\Sigma_\omega$ is the resolvent of $A^\omega$. In general, the operator $A^\omega$ is not closed in the norm of $\X$. It is elementary to check that if $\omega_1\leq\omega_2$, then $\Sigma_{\omega_1}\subseteq\Sigma_{\omega_2}$ and $A^{\omega_1}\preceq A^{\omega_2}$.

\begin{defn}
The \emph{generator} $A$ of an unbounded $C_0$-semigroup $(T(t))_{t\geq 0}$ is defined as follows: 
\begin{enumerate}
\item $\text{dom}(A)=\bigcup\limits_{\omega\in\R}\text{dom}(A^\omega)$;
\item if $x\in\text{dom}(A^\omega)$, and $x=J_\lambda^\omega y$ for $y\in\Sigma_\omega$, $\re\lambda>\omega$, then $Ax=\lambda x-y$.
\end{enumerate} 
\end{defn}

The proposition below gathers some key properties which are needed in later proofs.
\begin{prop}[{\cite[pages 121, 124]{RJH})}]\label{prop-2}
The following conclusions hold.
\begin{enumerate}
\item For each $\omega\in\mathbf{R}$, the domain $\text{dom}(A^\omega)$ is precisely
\begin{eqnarray}
\text{dom}(A^\omega)
\nonumber&&=\bigg\{x\in\Sigma_\omega:\,\text{$T(\cdot)x$ is differentiable at every $t>0$}\\
\label{Aomega-1-a}&&\quad\quad\text{and there exists $y\in\Sigma_\omega$ such that $y=\lim\limits_{t\to 0^+}\dfrac{T(t)x-x}{t}$}\bigg\}.
\end{eqnarray}
\item For $x\in\text{dom}(A)$, the limit $\lim\limits_{t\to 0^+}\dfrac{T(t)x-x}{t}$ exists in the norm of $\X$, and is equal to $Ax$ (see \cite[Theorem 2.13]{RJH}).
\item If $x\in\text{dom}(A)$, then $T(t)x\in\text{dom}(A)$ and the function $T(\cdot)x$ is differentiable with $dT(t)x/dt=T(t)Ax=AT(t)x$.
\end{enumerate}
\end{prop}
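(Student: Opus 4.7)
All three parts are driven by a single formula for $T(t)$ acting on a resolvent vector. For $y\in\Sigma_\omega$ and $\re\lambda>\omega$, a change of variable in the defining integral \eqref{J-omega}, combined with axiom (A2), gives
\begin{equation*}
T(t)J_\lambda^\omega y \;=\; e^{\lambda t}J_\lambda^\omega y \;-\; e^{\lambda t}\int_0^t e^{-\lambda u}T(u)y\,du,\qquad t\geq 0.
\end{equation*}
Pulling $T(t)$ inside the Bochner integral is justified by $T(t)y\in\Sigma_\omega$ (absolute integrability against $e^{-\lambda u}$ in the $\|\cdot\|_\X$-norm) together with the continuity postulated in (A3). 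From this identity, differentiability of $T(\cdot)x$ at every $t>0$ is immediate for $x=J_\lambda^\omega y$, and the right-derivative at $t=0^+$ equals $\lambda x - y$; conversely, any $x\in\Sigma_\omega$ admitting a right-derivative $y\in\Sigma_\omega$ is recovered as $x=J_\lambda^\omega(\lambda x-y)$ by integrating by parts against $e^{-\lambda t}$. This yields the description of $\text{dom}(A^\omega)$ in part (1).

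For part (2), I take $x\in\text{dom}(A)$, pick $\omega$ with $x\in\text{dom}(A^\omega)$ and write $x=J_\lambda^\omega y$. The displayed formula rearranges to
\begin{equation*}
\frac{T(t)x-x}{t}\;=\;\frac{e^{\lambda t}-1}{t}\,x\;-\;\frac{e^{\lambda t}}{t}\int_0^t e^{-\lambda u}T(u)y\,du,
\end{equation*}
and the strong continuity of $T(\cdot)y$ at $0^+$ in the norm of $\X$ (from axiom (A3), since $y\in\Sigma_\omega\subseteq\mfd(T)$) forces the right-hand side to converge in $\X$ to $\lambda x - y = Ax$. For part (3), I fix $x\in\text{dom}(A^\omega)$ and $t>0$; the resolvent representation exhibits $T(t)x$ again as a member of $\Sigma_\omega$ (a linear combination of vectors in that space), and combining the semigroup identity $T(s)T(t)x = T(s+t)x$ with the difference quotient produces both $T(t)Ax$, by commuting $T(t)$ past the difference quotient via the above integral formula, and $AT(t)x$, by applying part (2) to $T(t)x$.

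The main obstacle is the two-topology bookkeeping: $A^\omega$ is closed only in the $N_\omega$-norm on $\Sigma_\omega$, while semigroup continuity lives in $\|\cdot\|_\X$. Every limit, interchange of $T(t)$ with a Bochner integral, and evaluation at $t=0^+$ has to be justified either by $N_\omega$-boundedness of the orbit or by $\X$-norm continuity, and one must check that the outputs remain in $\Sigma_\omega$ (or at least in $\mfd(T)$) for the next step to apply. Once this bookkeeping is in place, the three parts are really three facets of the single resolvent identity above.
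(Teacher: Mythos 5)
The paper does not actually prove this proposition: it is imported verbatim from Hughes \cite{RJH} (pages 121 and 124, together with Theorem 2.13 there), so there is no in-paper argument to compare yours against. Judged on its own merits, your reconstruction follows the classical resolvent route from the bounded theory, and the identity $T(t)J_\lambda^\omega y = e^{\lambda t}J_\lambda^\omega y - e^{\lambda t}\int_0^t e^{-\lambda u}T(u)y\,du$ is indeed the engine of Hughes's development. But the two steps you set aside as ``bookkeeping'' are exactly where the unbounded setting bites, and neither is carried out. First, the interchange $T(t)\int_0^\infty e^{-\lambda s}T(s)y\,ds=\int_0^\infty e^{-\lambda s}T(t+s)y\,ds$ is not justified by absolute integrability plus (A3): those give Bochner integrability of $s\mapsto T(t+s)y$ (since $\|T(t+s)y\|\leq e^{\omega(t+s)}N_\omega(y)$), but moving an \emph{unbounded} operator through a Bochner integral requires closedness of $T(t)$ (Hille's theorem) or some substitute, and the definition of an unbounded $C_0$-semigroup used here imposes no closedness on the individual operators $T(t)$.

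Second, in the converse inclusion of part (1) you need $\tfrac{d}{ds}T(s)x=T(s)y$ in order for the integration by parts to yield $x=J_\lambda^\omega(\lambda x-y)$; knowing only that $T(\cdot)x$ is differentiable at each $s>0$ and that $(T(h)x-x)/h\to y$ as $h\to 0^+$, identifying the derivative with $T(s)y$ again requires passing the unbounded operator $T(s)$ through a limit of difference quotients --- the same unproved interchange. The issue recurs in part (3) (``commuting $T(t)$ past the difference quotient''), and in the claim that $T(t)x\in\Sigma_\omega$ because it is ``a linear combination of vectors in that space'': the term $\int_0^t e^{-\lambda u}T(u)y\,du$ is a limit of such combinations, so one needs completeness of $(\Sigma_\omega,N_\omega)$ or a direct estimate. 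These interchanges are precisely the content of Hughes's Theorems 2.9, 2.13 and 2.15; without them your argument is a correct outline of the bounded-case proof rather than a proof in the unbounded setting, and the honest course here is to do what the paper does and cite \cite{RJH}.
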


Let $\calD_\omega$ be the closure of $\text{dom}(A^\omega)$ in the $N_\omega$-norm; note that $\calD_\omega\subseteq\Sigma_\omega$. For each $\omega\in\mathbf{R}$, we define the operator $\A^\omega$ by
\begin{equation}\label{Aomega-2}
\text{dom}(\A^\omega)=\{x\in\calD_\omega:x\in\text{dom}(A^\omega),A^\omega x\in\calD_\omega\},\quad \A^\omega x=A^\omega x.
\end{equation}
This is an unbounded operator on $\calD_\omega$.

\begin{prop}[{\cite[Theorem 2.20, Lemma 2.25]{RJH}}]\label{Thm220}
The following statements are true:
\begin{enumerate}
\item $\calD_\omega$ is a Banach space with respect to the norm $N_\omega$.
\item For every $\omega\in\mathbf{R}$ and every $t>0$, we have $T(t)\calD_\omega\subseteq\calD_\omega$. Moreover, the family $(T(t)|_{\calD_\omega})_{t\geq 0}$ is a bounded $C_0$-semigroup acting on the Banach space $(\calD_\omega,N_\omega)$, with the generator $\A^\omega$.
\end{enumerate}
\end{prop}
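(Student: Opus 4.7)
The plan is to handle the two parts of Proposition \ref{Thm220} in sequence, first establishing the Banach space structure on $\calD_\omega$ and then the semigroup properties of the restricted family.

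For part (1), I would first verify that $N_\omega$ defined in \eqref{N-Sigma} is a genuine norm on $\Sigma_\omega$. Homogeneity and the triangle inequality are immediate from the supremum; for positive-definiteness, the continuity axiom (A3) forces $\lim_{t\to 0^+} e^{-\omega t}\|T(t)x\|=\|x\|$, whence $\|x\|\leq N_\omega(x)$ and in particular $N_\omega(x)=0$ implies $x=0$. This also shows the natural inclusion $(\Sigma_\omega,N_\omega)\hookrightarrow(\X,\|\cdot\|)$ is continuous. For completeness, I take $(x_n)\subseteq\Sigma_\omega$ Cauchy in $N_\omega$: it is $\X$-Cauchy and converges to some $x\in\X$, and for each fixed $t\geq 0$ the sequence $(T(t)x_n)$ is $\X$-Cauchy with limit $y(t)$. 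The delicate step is to identify $y(t)=T(t)x$, which I would do by passing to the limit in the semigroup identity $T(t)x_n=T(t-s)T(s)x_n$ and using a Fatou-type lower-semicontinuity bound $N_\omega(x)\leq\liminf_n N_\omega(x_n)$ to conclude $x\in\Sigma_\omega$ and $N_\omega(x_n-x)\to 0$. Once $\Sigma_\omega$ is $N_\omega$-complete, $\calD_\omega$, being defined as the $N_\omega$-closure of $\text{dom}(A^\omega)$, is a closed subspace and hence itself Banach.

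For part (2), the quantitative core is the estimate $N_\omega(T(t)y)\leq e^{\omega t}N_\omega(y)$ for $y\in\Sigma_\omega$, obtained from $e^{-\omega s}\|T(s)T(t)y\|=e^{\omega t}\cdot e^{-\omega(s+t)}\|T(s+t)y\|$ upon taking $\sup_{s\geq 0}$. Combined with Proposition \ref{prop-2}(3), which ensures that $T(t)$ preserves $\text{dom}(A^\omega)$, this shows $T(t)|_{\text{dom}(A^\omega)}$ extends by density to a bounded operator on $(\calD_\omega,N_\omega)$ of norm at most $e^{\omega t}$, and in particular $T(t)\calD_\omega\subseteq\calD_\omega$; the semigroup law is inherited verbatim from the law on $\mfd(T)\supseteq\Sigma_\omega$. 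For strong continuity of $t\mapsto T(t)|_{\calD_\omega}$ in the $N_\omega$ topology at $t=0$, I would first treat $x\in\text{dom}(A^\omega)$: Proposition \ref{prop-2}(1) supplies differentiability of $T(\cdot)x$ with derivative in $\Sigma_\omega$, which upgrades to $N_\omega(T(h)x-x)=O(h)$ as $h\to 0^+$; a standard $3\varepsilon$-argument using the uniform bound $\|T(h)\|_{N_\omega\to N_\omega}\leq e^{|\omega|h}$ for small $h$ then extends continuity to all of $\calD_\omega$ by density. To identify the generator, the same Proposition \ref{prop-2}(1) yields $\lim_{h\to 0^+}(T(h)x-x)/h=A^\omega x$ in the $N_\omega$-norm for $x\in\text{dom}(A^\omega)$; the requirement that both $x$ and $A^\omega x$ lie in $\calD_\omega$ is exactly the definition \eqref{Aomega-2} of $\A^\omega$, and so the generator of the restricted bounded $C_0$-semigroup coincides with $\A^\omega$.

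The main obstacle is the simultaneous upgrade of strong continuity and of the generator limit from the $\X$-norm — provided directly by axioms (A1)-(A3) and Proposition \ref{prop-2}(2) — to the considerably finer $N_\omega$-norm, which controls $T(t)x$ uniformly over all $t\geq 0$. This is precisely why $\calD_\omega$, rather than the full $\Sigma_\omega$, is the correct stage: density of $\text{dom}(A^\omega)$ in $\calD_\omega$ drives the approximation argument, and Proposition \ref{prop-2}(1) is tailored to supply the $N_\omega$-strength differentiability on $\text{dom}(A^\omega)$ needed to initiate it.
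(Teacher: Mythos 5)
First, a point of reference: the paper does not prove this proposition at all --- it is imported verbatim from Hughes \cite{RJH} (Theorem 2.20 and Lemma 2.25 there), so your attempt can only be judged on its own merits, not against an internal argument of the paper.

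Your outline has the right overall shape, but the central step of part (1) fails as written. You reduce completeness of $\calD_\omega$ to completeness of $(\Sigma_\omega,N_\omega)$ and then must identify the limit $y(t)=\lim_n T(t)x_n$ with $T(t)x$. The operators $T(t)$ are merely unbounded linear operators: the definition of an unbounded $C_0$-semigroup imposes no closedness or closability, so from $x_n\to x$ and $T(t)x_n\to y(t)$ in $\X$ nothing forces $x\in\text{dom}[T(t)]$, let alone $T(t)x=y(t)$. Worse, it is not even clear that the limit $x$ satisfies axioms (A1)--(A3), since membership in $\calD(T)=\bigcap_{t,s}\text{dom}[T(t)T(s)]$ is not stable under norm limits. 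Your proposed remedy --- passing to the limit in $T(t)x_n=T(t-s)T(s)x_n$ --- is circular, because it presupposes some continuity or closedness of the unbounded operator $T(t-s)$ along the sequence $T(s)x_n$, which is exactly what is unavailable. This is precisely why the construction passes through $\calD_\omega$, the $N_\omega$-closure of $\text{dom}(A^\omega)$, and through the resolvent family $J^\omega_\lambda$ of \eqref{J-omega}, rather than through a bare Cauchy-sequence argument on $\Sigma_\omega$; completeness of all of $\Sigma_\omega$ is not what is claimed and is not needed.

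In part (2), the quantitative estimate $N_\omega(T(t)y)\le e^{\omega t}N_\omega(y)$ is correct and is indeed the core of the boundedness claim. But two gaps remain. First, the limit in \eqref{Aomega-1-a} defining $A^\omega x$ is a limit in the $\X$-norm (cf.\ Proposition \ref{prop-2}(2)); your assertion that differentiability of $T(\cdot)x$ ``upgrades to $N_\omega(T(h)x-x)=O(h)$'' is exactly the nontrivial point and is not proved --- and even granting it, an $O(h)$ bound on $N_\omega(T(h)x-x)$ gives strong continuity at $0$ but not convergence of the difference quotient $(T(h)x-x)/h$ in the $N_\omega$-norm, which is what the generator statement requires. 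Second, you only argue that $\A^\omega$ is contained in the generator $B$ of the restricted semigroup; the proposition asserts equality, and the reverse inclusion $\text{dom}(B)\subseteq\text{dom}(\A^\omega)$ is nowhere addressed. Closing it requires a surjectivity argument: one must show that $\lambda-\A^\omega$ maps onto $\calD_\omega$ for $\re\lambda$ large, using the integral representation $J^\omega_\lambda$, and then conclude equality from injectivity of $\lambda-B$. Without that step the final sentence of your argument conflates the definition \eqref{Aomega-2} of $\A^\omega$ with the identification of the generator.
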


\section{Stone-type theorems}\label{sec4}
In this section, we are concerned with an unbounded $C_0$-semigroup $(T(t))_{t\geq 0}$ acting on a complex Hilbert space $\calH$. Due to computations necessarily involving dual operators, we impose the assumption that $T(t)$ is densely defined for all fixed $t\geq 0$ (unless otherwise specified). As in the bounded case, we adopt the following definition.
\begin{defn}
If $(T(t))_{t \geq 0}$ is an unbounded $C_0$-semigroup on a complex Hilbert space $\calH$, then the family $(T(t)^*)_{t \geq 0}$, where for each $t\geq 0$, $T(t)^*$ is the adjoint of the operator $T(t)$, is called the \emph{unbounded adjoint semigroup}.
\end{defn}

\subsection{Some initial properties}
This section contains several technical observations which will be later on referred to. Some of these statements may have an intrinsic value.
\begin{thm}\label{thm-*}
Let $(T(t))_{t\geq 0}$ be an unbounded $C_0$-semigroup on a complex Hilbert space $\calH$. Assume that $T(t)$ is densely defined for all fixed $t\geq 0$. Then:
\begin{enumerate}
\item If the set $\calD(T)$ is dense, then the family $(T(t)^*)_{t\geq 0}$ satisfies axiom (A2) for every $x\in\calD(T^*)$;
\item $\lim\limits_{t\to s}\langle T(t)^*x-T(s)^*x,y\rangle=0$, $\lim\limits_{t\to 0^+}\langle T(t)^*x-x,y\rangle=0$, $\forall x\in D(T^*)$, $\forall y\in\mfd(T)$, $\forall s>0$.
\end{enumerate}
\end{thm}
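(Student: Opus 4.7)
The plan is to handle part (2) first, since it is a direct transfer of strong continuity to the adjoint, and then to deduce part (1) from the standard adjoint-reversal rule $(AB)^* \supseteq B^* A^*$ together with axiom (A2).

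For part (2), I would fix $x \in D(T^*)$ and $y \in \mfd(T)$ and use the adjoint pairing
$$\langle T(t)^* x - T(s)^* x, y \rangle = \langle x, T(t) y - T(s) y \rangle.$$
Axiom (A3) ensures that $\tau \mapsto T(\tau) y$ is continuous on $(0,\infty)$ and that $T(t) y \to y$ as $t \to 0^+$ in norm. Both limits then follow by the Cauchy--Schwarz inequality applied to the right-hand side; this part is essentially bookkeeping.

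For part (1), the objective is to verify $T(t+s)^* x = T(t)^* T(s)^* x$ for every $x \in \calD(T^*)$. For $y$ in a dense subspace of $\calH$ contained in $\calD(T)$ on which axiom (A2) extends, I would compute
$$\langle T(t+s)^* x, y \rangle = \langle x, T(t+s) y \rangle = \langle x, T(t) T(s) y \rangle = \langle T(t)^* x, T(s) y \rangle = \langle T(s)^* T(t)^* x, y \rangle,$$
where the third equality uses $x \in \text{dom}(T(t)^*)$ together with $T(s) y \in \text{dom}(T(t))$, and the fourth uses the defining property $T(t)^* x \in \text{dom}(T(s)^*)$ built into $\calD(T^*)$. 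Density of the test vectors in $\calH$ then yields $T(t+s)^* x = T(s)^* T(t)^* x$, and swapping the roles of $t$ and $s$ delivers the required identity $T(t)^* T(s)^* x = T(t+s)^* x$, which is axiom (A2) for the adjoint family.

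The hard part is the mismatch between the literal statement of axiom (A2), which only applies to $y \in \mfd(T)$, and the need to test against a dense subspace of $\calH$ inside $\calD(T)$. I expect to bridge this gap using the inclusion $\mfd(T) \subseteq \calD(T)$ together with the continuity of orbits and the closedness results from Hughes' framework --- in particular the closedness of $\A^\omega$ on $(\calD_\omega, N_\omega)$ recorded in Proposition \ref{Thm220} --- to propagate the semigroup identity from $\mfd(T)$ to a sufficiently rich dense subspace. The hypothesis $\overline{\calD(T)} = \calH$ is precisely what makes the adjoint computations above meaningful, since it guarantees that $T(t) T(s)$ is densely defined and hence has a well-defined adjoint into which $T(s)^* T(t)^*$ embeds.
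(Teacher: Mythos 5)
Your core computations coincide with the paper's: for part (2) you use exactly the pairing $\langle T(t)^*x-T(s)^*x,y\rangle=\langle x,T(t)y-T(s)y\rangle$ together with axiom (A3) for $y\in\mfd(T)$, and for part (1) exactly the duality chain $\langle T(t)T(s)y,x\rangle=\langle y,T(s)^*T(t)^*x\rangle$ and $\langle T(t+s)y,x\rangle=\langle y,T(t+s)^*x\rangle$, tested against a dense family of vectors $y$. The only divergence is the step you single out as ``the hard part.'' The paper performs no propagation of axiom (A2) from $\mfd(T)$ to a larger set: it simply applies the identity $T(t)T(s)y=T(t+s)y$ to every $y\in\calD(T)$ and then uses the stated hypothesis that $\calD(T)$ is dense. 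Under that reading your extra detour is unnecessary; and, as sketched, it would not obviously succeed anyway, because the closedness recorded in Proposition \ref{Thm220} concerns $\A^\omega$ in the $N_\omega$-topology on $\calD_\omega$, which gives no control in the norm of $\calH$ and no reason that limits of $\mfd(T)$-vectors fill out a norm-dense subspace of $\calH$ on which the semigroup identity persists. So either adopt the paper's reading (the multiplicative identity is taken to hold on all of $\calD(T)$, which is precisely what the density hypothesis on $\calD(T)$ is for), or, if you insist on the literal form of axiom (A2), the natural hypothesis is density of $\mfd(T)$ rather than of $\calD(T)$; the closedness argument you propose does not bridge the two. Everything else, including the harmless reordering $T(s)^*T(t)^*$ versus $T(t)^*T(s)^*$, matches the paper.
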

\begin{proof}
(1) Let $x\in\calD(T^*)$.

Since the family $(T(t))_{t\geq 0}$ is an unbounded $C_0$-semigroup, by axiom (A2), for every $y\in\calD(T)$, we have $T(t)T(s)y=T(t+s)y$, and so,
$$
\langle T(t)T(s)y,x\rangle=\langle T(t+s)y,x\rangle.
$$
Note that
$$
\langle T(t)T(s)y,x\rangle=\langle y,T(s)^*T(t)^*x\rangle
$$
and
$$
\langle T(t+s)y,x\rangle=\langle y,T(t+s)^*x\rangle.
$$
Thus, we get
$$
\langle y,T(s)^*T(t)^*x\rangle=\langle y,T(t+s)^*x\rangle,\quad\forall y\in\calD(T),
$$
which implies, as the set $\calD(T)$ is dense, that $T(s)^*T(t)^*x=T(t+s)^*x$.

(2) We omit the case when $s>0$ and prove the case when $s=0$ as their techniques are similar. Let $x\in D(T^*)$. For every $y\in\mfd(T)$, we have
\begin{eqnarray*}
\langle T(t)^*x-x,y\rangle
&=&\langle x,T(t)y-y\rangle,
\end{eqnarray*}
which implies, as $(T(t))_{t\geq 0}$ is an unbounded $C_0$-semigroup, that
\begin{eqnarray}\label{claim-2}
\lim\limits_{t\to 0^+}\langle T(t)^*x-x,y\rangle=0,\quad\forall y\in\mfd(T).
\end{eqnarray}
\end{proof}

The concept of an adjoint pair of operators \cite[page 167]{TK} is useful for studying the generators of unbounded adjoint semigroups. Recall that two unbounded operators $S$ and $T$ are called \emph{adjoint to each other} if
$$
\langle Sx,y\rangle=\langle x,Ty\rangle,\quad\forall x\in\text{dom}(S),\forall y\in\text{dom}(T).
$$
\begin{thm}\label{B<A*}
Let $(T(t))_{t\geq 0}$ be an unbounded $C_0$-semigroup on a complex Hilbert space $\calH$ with generator $A$. Assume that $T(t)$ is densely defined for all fixed $t\geq 0$. If the family $(T(t)^*)_{t\geq 0}$ is an unbounded $C_0$-semigroup with generator $B$, then the two operators $A$ and $B$ are adjoint to each other.

Furthermore, if the operator $A$ is densely defined, then the operator inclusion $B\preceq A^*$ holds.
\end{thm}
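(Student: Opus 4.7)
The plan is to reduce the adjointness of $A$ and $B$ to the Hilbert-space duality identity $\langle T(t)x,y\rangle=\langle x,T(t)^*y\rangle$ and then differentiate at $t=0$.

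First I would fix $x\in\text{dom}(A)$ and $y\in\text{dom}(B)$. The preliminary point is that both sides of the above pairing genuinely make sense for every $t\geq 0$. To see this, I trace through the Hughes construction: $x\in\text{dom}(A)$ means $x\in\text{dom}(A^\omega)$ for some $\omega\in\R$, and Proposition \ref{prop-2}(1) together with the defining chain $\text{dom}(A^\omega)\subseteq\Sigma_\omega\subseteq\mfd(T)\subseteq D(T)$ yields $x\in\text{dom}(T(t))$ for every $t\geq 0$. The identical reasoning, applied to the unbounded $C_0$-semigroup $(T(t)^*)_{t\geq 0}$ with generator $B$, places $y$ in $D(T^*)$, so $y\in\text{dom}(T(t)^*)$ for every $t\geq 0$. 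In particular, the duality identity holds at every $t>0$.

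Dividing by $t>0$ one gets
$$\left\langle\frac{T(t)x-x}{t},y\right\rangle=\left\langle x,\frac{T(t)^*y-y}{t}\right\rangle,$$
and by Proposition \ref{prop-2}(2) applied on each side (to $(T(t))_{t\geq 0}$ with generator $A$, and to $(T(t)^*)_{t\geq 0}$ with generator $B$) both difference quotients converge in norm as $t\to 0^+$, to $Ax$ and $By$ respectively. Continuity of the inner product then gives $\langle Ax,y\rangle=\langle x,By\rangle$, which is exactly the statement that $A$ and $B$ are adjoint to each other.

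For the second assertion I would invoke the definition of the Hilbert-space adjoint: if $A$ is densely defined, then $A^*$ exists, and the identity $\langle Ax,y\rangle=\langle x,By\rangle$ valid for all $x\in\text{dom}(A)$ forces $y\in\text{dom}(A^*)$ with $A^*y=By$. Since $y\in\text{dom}(B)$ was arbitrary, this is exactly the inclusion $B\preceq A^*$.

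The one point that requires genuine care, rather than being routine, is verifying the inclusions $\text{dom}(A)\subseteq D(T)$ and $\text{dom}(B)\subseteq D(T^*)$. These are not immediate from the bare definition of the Hughes generator (which proceeds through the auxiliary operators $A^\omega$ and the resolvent-like maps $J_\lambda^\omega$), but they drop out cleanly from Proposition \ref{prop-2}(1) and the definition of $\mfd(\cdot)$. Once these inclusions are on the table, the rest is a standard difference-quotient computation combined with the definition of the adjoint.
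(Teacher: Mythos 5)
Your proposal is correct and follows essentially the same route as the paper: pair the difference quotients via the duality identity $\langle T(t)x,y\rangle=\langle x,T(t)^*y\rangle$, let $t\to 0^+$ using the norm convergence of the quotients to $Ax$ and $By$ (Proposition \ref{prop-2}), and read off $B\preceq A^*$ from the definition of the adjoint. Your extra care in checking $\text{dom}(A)\subseteq\mfd(T)\subseteq D(T)$ and $\text{dom}(B)\subseteq\mfd(T^*)\subseteq D(T^*)$, so that the pairing is well defined for all $t$, is a welcome refinement of the paper's terser argument (which also cites part (3) of that proposition where part (2) is the one actually needed).
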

\begin{proof}
(1) Let $x\in\text{dom}(A)\subseteq\mfd(T)$ and $y\in\text{dom}(B)\subseteq\mfd(T^*)$. We have
\begin{eqnarray*}
\langle \dfrac{T(t)x-x}{t},y\rangle=\langle x,\dfrac{T(t)^*y-y}{t}\rangle,
\end{eqnarray*}
which implies, by letting $t\to 0^+$ and using Proposition \ref{prop-2}(3), that
$$
\langle Ax,y\rangle=\langle x,By\rangle.
$$
In particular, if the operator $A$ is densely defined, then the adjoint $A^*$ is well-defined, and so the above identity gives $B\preceq A^*$.
\end{proof}

\begin{rem}
As mentioned in the Introduction, if $(T(t))_{t\geq 0}$ is a bounded $C_0$-semigroup with generator $A$, then the adjoint operator $A^*$ is always the generator of the bounded adjoint semigroup $(T(t)^*)_{t\geq 0}$. However, this fails to hold when the semigroup is unbounded (see Propositions \ref{generator-1}-\ref{generator-2}).
\end{rem}

\begin{prop}\label{prop-afternoon-sun}
Let $(T(t))_{t\geq 0}$ be an unbounded $C_0$-semigroup on a complex Hilbert space $\calH$ with generator $A$. Assume that $T(t)$ is densely defined for all fixed $t\geq 0$. Furthermore, assume that the generator $A$ is densely defined. Then:
\begin{enumerate}
\item For every $x\in D(T^*)$, 
$$\int_0^t T(s)^*x\,ds\in\text{dom}(A^*),\quad\forall t\geq 0$$
and
$$
A^*\left(\int_0^t T(s)^*x\,ds\right)=T(t)^*x-x,\quad\forall t\geq 0.
$$
\item If $(T(t)^*)_{t\geq 0}$ is a semigroup, then $\mathbf{D}(T^*)\subseteq\overline{\text{dom}(A^*)}$.
\end{enumerate}
\end{prop}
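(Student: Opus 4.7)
\bigskip

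\noindent\textbf{Proof plan for Proposition \ref{prop-afternoon-sun}.} The idea is to establish part (1) by directly verifying the defining identity of $\text{dom}(A^\ast)$ against arbitrary test vectors from $\text{dom}(A)$, using the fundamental theorem of calculus supplied by Proposition \ref{prop-2}(3); part (2) will then follow from part (1) via a standard averaging argument over small intervals.

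\medskip

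For part (1), fix $x\in D(T^\ast)$ and $t\geq 0$, and set $v:=\int_{0}^{t}T(s)^\ast x\,ds$. The integral is interpreted in the weak (Pettis) sense; its existence follows from the weak continuity of $s\mapsto T(s)^\ast x$ coming from Theorem \ref{thm-*}(2), tested against the dense set $\mfd(T)\supseteq\text{dom}(A)$ (dense because $A$ is assumed densely defined). To conclude that $v\in\text{dom}(A^\ast)$ with $A^\ast v=T(t)^\ast x-x$, it suffices to check
\[
\langle Au,v\rangle=\langle u,T(t)^\ast x-x\rangle,\qquad\forall u\in\text{dom}(A).
\]
For such $u$, Proposition \ref{prop-2}(3) gives $T(s)u\in\text{dom}(A)$ and $\tfrac{d}{ds}T(s)u=T(s)Au$, continuous on $[0,t]$. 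The fundamental theorem of calculus yields $T(t)u-u=\int_{0}^{t}T(s)Au\,ds$ as a Bochner integral. Pairing with $x\in D(T^\ast)$ and passing the adjoint through the inner product,
\[
\langle u,T(t)^\ast x-x\rangle=\langle T(t)u-u,x\rangle=\int_{0}^{t}\langle T(s)Au,x\rangle\,ds=\int_{0}^{t}\langle Au,T(s)^\ast x\rangle\,ds=\langle Au,v\rangle,
\]
which is the desired identity.

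\medskip

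For part (2), assume now that $(T(t)^\ast)_{t\geq 0}$ is an unbounded $C_0$-semigroup and pick $x\in\mfd(T^\ast)$. Axiom (A3) for this semigroup yields norm continuity of $T(\cdot)^\ast x$ on $[0,\infty)$ with $T(0)^\ast x=x$, so the Bochner integral $v_t:=\int_{0}^{t}T(s)^\ast x\,ds$ exists for every $t>0$ and agrees with the weak integral used in part (1). Applying part (1) gives $v_t\in\text{dom}(A^\ast)$, hence $\tfrac{1}{t}v_t\in\text{dom}(A^\ast)$. The standard averaging estimate
\[
\left\|\tfrac{1}{t}v_t-x\right\|\leq\tfrac{1}{t}\int_{0}^{t}\|T(s)^\ast x-x\|\,ds\xrightarrow[t\to 0^+]{}0,
\]
where the integrand tends to $0$ by axiom (A3), shows that $x$ lies in the norm closure of $\text{dom}(A^\ast)$.

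\medskip

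The main delicate point I anticipate is the interpretation of the integral in part (1): outside the setting of part (2) we do not have norm continuity of $T(\cdot)^\ast x$, and we must rely on a weak/Pettis formulation justified by Theorem \ref{thm-*}(2) and the density of $\text{dom}(A)$. Once this integral is rigorously defined, the rest is essentially algebraic manipulation combined with the basic fact that $T(\cdot)u$ is a classical solution on $\text{dom}(A)$.
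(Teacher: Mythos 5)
Your proposal is correct and takes essentially the same route as the paper: part (1) is proved by pairing $\int_0^t T(s)^*x\,ds$ against $Ay$ for $y\in\text{dom}(A)$ and using Proposition \ref{prop-2}(3) to write $T(t)y-y=\int_0^t T(s)Ay\,ds$, and part (2) by the standard averaging argument $\frac{1}{t}\int_0^t T(s)^*x\,ds\to x$. Your extra care about interpreting the integral in the weak/Pettis sense addresses a point the paper silently glosses over, but the core argument is identical.
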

\begin{proof}
(1) Fix $t\geq 0$ and $x\in D(T^*)$.  For every $y\in\text{dom}(A)$ one finds
\begin{eqnarray*}
\langle \int_0^t T(s)^*x\,ds,Ay\rangle
&=&\int_0^t \langle T(s)^*x,Ay\rangle  ds=\int_0^t \langle x,T(s)Ay\rangle ds\\
&=&\langle x,\int_0^t T(s)Ay\,ds\rangle=\langle x,T(t)y-y\rangle\\
&=&\langle T(t)^*x-x,y\rangle,
\end{eqnarray*}
where the second and fourth equalities hold by Proposition \ref{prop-2}(3).

(2) Let $x\in \mathbf{D}(T^*)$. By the first part, we have
$$\dfrac{1}{t}\int_0^t T(s)^*x\,ds\in\text{dom}(A^*),$$
and so,
$$
x=\lim\limits_{t\to 0^+}T(s)^*x=\lim\limits_{t\to 0^+}\dfrac{1}{t}\int_0^t T(s)^*x\,ds\in\overline{\text{dom}(A^*)}.
$$
\end{proof}

The results below are simple, but they will be important steps toward studying the Stone-type theorems for unbounded $C_0$-semigroups.
\begin{prop}\label{T<V}
Let $(T(t))_{t\geq 0}$, $(V(t))_{t\geq 0}$ be two families of unbounded, linear operators on a separable, complex Hilbert space $\calH$ such that $T(t)\preceq V(t)$ for every $t\geq 0$. If $(T(t))_{t\geq 0}$ is an unbounded $C_0$-semigroup, then so is $(V(t))_{t\geq 0}$. In this case, for every $\omega>0$, $A^\omega\preceq B^\omega$, where $A,B$ are generators of $(T(t))_{t\geq 0}$, $(V(t))_{t\geq 0}$, respectively.
\end{prop}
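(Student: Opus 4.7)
The plan is to first observe that $\mfd(T) \subseteq \mfd(V)$, which immediately delivers that $(V(t))_{t \geq 0}$ inherits the unbounded $C_0$-semigroup structure. Then I would transfer this inclusion up to the scales $\Sigma_\omega$ and exploit the resolvent description of $A^\omega$ to land each $x \in \text{dom}(A^\omega)$ inside $\text{dom}(B^\omega)$ with matching image.

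For the first part, fix a nonzero $x \in \mfd(T)$. Axiom (A1) for $V$ is immediate once I verify axiom (A2) at $x$: for every $t,s \geq 0$, $x \in \text{dom}(T(s)) \subseteq \text{dom}(V(s))$ gives $V(s)x = T(s)x$. Because $x \in \calD(T)$, this element lies in $\text{dom}(T(t)) \subseteq \text{dom}(V(t))$, so $V(t)V(s)x$ is defined and
\[
V(t)V(s)x = V(t)T(s)x = T(t)T(s)x = T(t+s)x = V(t+s)x.
\]
Axiom (A3) then transfers verbatim because $V(\cdot)x$ and $T(\cdot)x$ coincide as $\calH$-valued functions on $[0,\infty)$ (since $x$ remains in $\text{dom}(T(t))$ for every $t$). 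Hence $\mfd(T) \subseteq \mfd(V)$, $(V(t))_{t \geq 0}$ is an unbounded $C_0$-semigroup, and the quantities $N_\omega(x)$ defined in \eqref{N-Sigma} agree for $T$ and $V$ on $\mfd(T)$, so $\Sigma_\omega^T \subseteq \Sigma_\omega^V$ with the same seminorm.

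For the inclusion $A^\omega \preceq B^\omega$, I would use the characterization of the generator through the resolvent in \eqref{J-omega}. Take $x \in \text{dom}(A^\omega)$ and pick $\lambda$ with $\re \lambda > \omega$; then there exists $y \in \Sigma_\omega^T$ with $x = J_\lambda^{\omega,T} y$ and $A^\omega x = \lambda x - y$. Since $y \in \Sigma_\omega^T \subseteq \Sigma_\omega^V$ and $T(t)y = V(t)y$ for every $t \geq 0$, the defining integrals coincide:
\[
J_\lambda^{\omega,V} y = \int_0^\infty e^{-\lambda t} V(t) y \, dt = \int_0^\infty e^{-\lambda t} T(t) y \, dt = J_\lambda^{\omega,T} y = x.
\]
By the definition of $B^\omega$, this places $x$ in $\text{dom}(B^\omega)$ with $B^\omega x = \lambda x - y = A^\omega x$, establishing $A^\omega \preceq B^\omega$.

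The only mildly delicate point is making sure that the identity $T(t)x = V(t)x$ really propagates through all the objects involved ($\mfd$, $\Sigma_\omega$, $J_\lambda^\omega$) without a silent domain mismatch; this is bookkeeping rather than a genuine obstacle. Once the equality of the two Bochner integrals is noted, the resolvent formulation of $A^\omega$ supplied in Section \ref{pre-unbdd-se} carries the entire argument through cleanly, with no need to invoke the differential characterization \eqref{Aomega-1-a}.
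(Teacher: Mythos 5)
Your proof is correct, and the first half (verifying $\mfd(T)\subseteq\mfd(V)$ and $\Sigma_\omega(T)\subseteq\Sigma_\omega(V)$ with equal norms) coincides with the paper's. The second half, however, takes a genuinely different route. The paper proves $A^\omega\preceq B^\omega$ through the differential characterization \eqref{Aomega-1-a} of $\text{dom}(A^\omega)$ (via the cited Theorem 2.15 of Hughes): it observes that for $f\in\Sigma_\omega(T)$ the orbit maps $T(\cdot)f$ and $V(\cdot)f$ coincide, so differentiability and the limit defining $A^\omega f$ transfer verbatim, landing $f$ in $\text{dom}(B^\omega)$ with the same image. You instead work entirely at the level of the Laplace transforms \eqref{J-omega}: since $T(t)y=V(t)y$ for $y\in\Sigma_\omega(T)$, the Bochner integrals $J_\lambda^{\omega}y$ computed for $T$ and for $V$ agree, and the fact that $\{J_\lambda^\omega\}$ is the resolvent of $A^\omega$ (resp.\ $B^\omega$) — the paper's quoted Theorem 2.9 of Hughes — immediately converts $x=J_\lambda^\omega y$ into membership in both domains with $A^\omega x=\lambda x-y=B^\omega x$. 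Your route avoids any discussion of differentiability of orbits and reduces everything to the equality of one integral, which is arguably cleaner; it does, however, lean on the identification $\text{dom}(A^\omega)=\operatorname{range}(J_\lambda^\omega)$, whereas the paper's argument uses only the explicit description of $\text{dom}(A^\omega)$ as a set. Both inputs are supplied in Section \ref{pre-unbdd-se}, so either argument is complete. (One cosmetic point: the proposition restricts to $\omega>0$, but both your argument and the paper's work for every $\omega\in\mathbf{R}$.)
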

\begin{proof}
Since $T(t)\preceq V(t)$ for every $t\geq 0$, we can check that $\mfd(T)\subseteq\mfd(V)$, and hence if the set $\mfd(T)$ is non-empty, then so is $\mfd(V)$.

Again since $T(t)\preceq V(t)$ for every $t\geq 0$, we have
$$
\Sigma_\omega(T)\subseteq\Sigma_\omega(V),\quad N_{\omega,T}(x)=N_{\omega,V}(x),\,\forall x\in\Sigma_\omega(T).
$$
By \cite[Theorem 2.15]{RJH}, the generator $A$ is of the following form
$$\text{dom}(A)=\underset{\omega\in\mathbf{R}}{\bigcup}\text{dom}(A^\omega),\quad Af=A^\omega f,\, f\in\text{dom}(A^\omega),$$
where the operators $A^\omega$, $\omega\in\mathbf{R}$, are defined by \eqref{Aomega-1-a}, that is $A^\omega f=g$,
\begin{eqnarray*}
\text{dom}(A^\omega)&=&\bigg\{f\in\Sigma_\omega(T):\,\text{$T(\cdot)f$ is differentiable at every $t>0$}\\
\nonumber&&\quad\text{and there exists $g\in\Sigma_\omega(T)$ such that $g=\lim\limits_{t\to 0^+}\dfrac{T(t)f-f}{t}$}\bigg\}.
\end{eqnarray*}
It follows from the fact $\Sigma_\omega(T)\subseteq\Sigma_\omega(V)$ and $T(t)\preceq V(t)$, that
\begin{eqnarray*}
\text{dom}(A^\omega)
&\subseteq&\bigg\{f\in\Sigma_\omega(V):\,\text{$T(\cdot)f$ is differentiable at every $t>0$}\\
\nonumber&&\quad\text{and there exists $g\in\Sigma_\omega(V)$ such that $g=\lim\limits_{t\to 0^+}\dfrac{T(t)f-f}{t}$}\bigg\}\\
&\subseteq&\text{dom}(B^\omega),
\end{eqnarray*}
and furthermore,
$$
A^\omega f=B^\omega f,\quad\forall f\in\text{dom}(A^\omega).
$$
\end{proof}

\begin{prop}\label{jnbv}
Let $P$ be a (linear or anti-linear) isometric involution acting on a complex Hilbert space $\calH$. If the family $(T(t))_{t\geq 0}$ is an unbounded $C_0$-semigroup with generator $A$, then the family $(S(t))_{t\geq 0}$ defined by $S(t)=PT(t)P$ is also an unbounded $C_0$-semigroup with generator $PAP$.
\end{prop}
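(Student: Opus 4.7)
My plan is to verify the three axioms (A1)--(A3) for the candidate semigroup $(S(t))_{t\geq 0}$ and then unpack the definition of the generator via the resolvent-type integrals $J_\lambda^\omega$ of Section~\ref{pre-unbdd-se}. The guiding observation, true for both linear and anti-linear isometric involutions, is that $P$ preserves norms and satisfies $P^2=I$, so the substitution $y=Px$ (equivalently $x=Py$) is a bijection of $\calH$ that interchanges dynamical quantities attached to $T$ and $S$.

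First, from $S(t)x=PT(t)Px$ and $\|P\cdot\|=\|\cdot\|$, one checks directly that
$$\|S(t)x\| = \|T(t)Px\|, \qquad \|S(t)x - x\| = \|P(T(t)Px - Px)\| = \|T(t)Px - Px\|,$$
so that $x\in\mfd(S)$ if and only if $Px\in\mfd(T)$; in particular $\mfd(S)=P(\mfd(T))\ne\emptyset$. The semigroup identity $S(t)S(s)x=S(t+s)x$ on $\calD(S)=P(\calD(T))$ follows from $P^2=I$ and the corresponding identity for $T$, and axiom (A3) is immediate from the displayed norm equality. Hence $(S(t))_{t\geq 0}$ is an unbounded $C_0$-semigroup.

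Next, the norm identity $\|S(t)x\|=\|T(t)Px\|$ gives $N_\omega^S(x)=N_\omega^T(Px)$, so $\Sigma_\omega^S = P(\Sigma_\omega^T)$. To compute the resolvent integrals \eqref{J-omega}, fix $\re\lambda>\omega$ and $x\in\Sigma_\omega^S$. In the linear case one may pull $P$ outside the Bochner integral to get $J_\lambda^{\omega,S}x = P\,J_\lambda^{\omega,T}(Px)$. In the anti-linear case the scalar $e^{-\lambda t}$ conjugates when moved through $P$, producing
$$J_\lambda^{\omega,S}x = \int_0^\infty P\bigl(e^{-\bar\lambda t}T(t)Px\bigr)\,dt = P\,J_{\bar\lambda}^{\omega,T}(Px),$$
valid since $\re\bar\lambda=\re\lambda>\omega$.

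Finally, suppose $x\in\text{dom}(A^{\omega,S})$, so $x=J_\lambda^{\omega,S}y$ for some $y\in\Sigma_\omega^S$. Applying $P$ to the identities above and using $P^2=I$ shows $Px=J_\mu^{\omega,T}(Py)$, where $\mu=\lambda$ or $\mu=\bar\lambda$ according as $P$ is linear or anti-linear; in particular $Px\in\text{dom}(A^{\omega,T})$. A short calculation, using anti-linearity $P(\bar\lambda Px)=\lambda x$ in the second case, yields in both cases
$$A^{\omega,S}x = \lambda x - y = PA^{\omega,T}Px.$$
Taking the union over $\omega\in\mathbf{R}$ gives $\text{dom}(B)=P(\text{dom}(A))$ and $B=PAP$ on this domain. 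The only subtlety to watch for is the anti-linear case of the resolvent identity --- the conjugation of $\lambda$ when pulling $P$ through scalars; once that is handled cleanly, the rest is bookkeeping with $P^2=I$.
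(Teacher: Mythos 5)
Your argument is correct, but it takes a genuinely different route from the paper's. The paper proves the generator identity by invoking the differentiability characterization of $\text{dom}(A^\omega)$ in \eqref{Aomega-1-a} (via \cite[Theorem 2.15]{RJH}): it checks in two separate steps ($B\preceq PAP$ and $PAP\preceq B$) that differentiability of $S(\cdot)f$ at each $s>0$ and the existence of the limit $\lim_{t\to 0^+}(S(t)f-f)/t$ in $\Sigma_\omega(S)$ transfer through $P$, using only that $P$ is an isometric bijection. You instead work directly with the resolvent-integral definition, establishing the intertwining identity $J_\lambda^{\omega,S}=P\,J_\mu^{\omega,T}P$ with $\mu=\lambda$ or $\bar\lambda$, and reading off $A^{\omega,S}x=\lambda x-y=PA^{\omega,T}Px$ from $x=J_\lambda^{\omega,S}y$. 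Your route is shorter and gets both inclusions at once, since the resolvent identity is an equality of ranges; the anti-linear conjugation $\lambda\mapsto\bar\lambda$, which you flag, is exactly the point that needs care and you handle it correctly (it cancels because $P$ is applied twice). The trade-off is that you must justify pulling the (possibly anti-linear) continuous map $P$ through a Bochner integral, which is standard but worth a sentence, whereas the paper's $\varepsilon$--$\delta$ computation avoids integrals entirely. The only cosmetic gap is that you state the reverse inclusion $P(\text{dom}(A))\subseteq\text{dom}(B)$ only implicitly; it follows by running your resolvent identity backward (or by symmetry, since $T(t)=PS(t)P$), and you should say so explicitly.
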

\begin{proof}
Since the operator $P$ is an isometric involution, we have $\mfd(S)=P[\mfd(T)]$, and hence the family $(S(t))_{t\geq 0}$ is also an unbounded $C_0$-semigroup on $\calH$.

Suppose that the operator $B$ is the generator of $(S(t))_{t\geq 0}$. By \cite[Theorem 2.15]{RJH}, the generator $B$ is of the following form
$$\text{dom}(B)=\underset{\omega\in\mathbf{R}}{\bigcup}\text{dom}(B^\omega),\quad Bf=B^\omega f,\, f\in\text{dom}(B^\omega),$$
where the operators $B^\omega$, $\omega\in\mathbf{R}$, are defined by \eqref{Aomega-1-a}, that is $B^\omega f=g$,
\begin{eqnarray*}
\text{dom}(B^\omega)&=&\bigg\{f\in\Sigma_\omega(S):\,\text{$S(\cdot)f$ is differentiable at every $t>0$}\\
\nonumber&&\quad\text{and there exists $g\in\Sigma_\omega(S)$ such that $g=\lim\limits_{t\to 0^+}\dfrac{S(t)f-f}{t}$}\bigg\}.
\end{eqnarray*}
Also since the operator $P$ is involutive, for every $\omega\in\mathbf{R}$ we have $\Sigma_\omega(T)=P[\Sigma_\omega(S)]$.

$\bullet$ First we prove that $B\preceq PAP$.

Let $f\in\text{dom}(B)$. Then there exists $\omega\in\mathbf{R}$ such that $f\in\text{dom}(B^\omega)$. Fix $s>0$. Since $S(\cdot)f$ is differentiable at $s$ with $x=dS(s)f/ds$, for every $\varepsilon>0$, there exists some $\delta=\delta(\varepsilon,s)$ such that
$$
\left\|\dfrac{S(t)f-S(s)f}{t-s}-x\right\|<\varepsilon,\quad\forall |t-s|<\delta.
$$
Thus, for every $t$ with $|t-s|<\delta$, we have
\begin{eqnarray*}
\left\|\dfrac{T(t)Pf-T(s)Pf}{t-s}-Px\right\|
&=&\left\|\dfrac{PT(t)Pf-PT(s)Pf}{t-s}-x\right\|\\
&=&\left\|\dfrac{S(t)f-S(s)f}{t-s}-x\right\|<\varepsilon.
\end{eqnarray*}
The above inequality means that the function $T(\cdot)Pf$ is differentiable at $s$.

We have
$$
Bf=B^\omega f=\lim\limits_{t\to 0^+}\dfrac{S(t)f-f}{t}=\lim\limits_{t\to 0^+}\dfrac{PT(t)Pf-f}{t}\in\Sigma_\omega(S),
$$
and hence
$$
PBf=P\left[\lim\limits_{t\to 0^+}\dfrac{PT(t)Pf-f}{t}\right]=\lim\limits_{t\to 0^+}\dfrac{T(t)Pf-Pf}{t}.
$$
Note that $Pf\in P[\Sigma_\omega(S)]=\Sigma_\omega(T)$ and $PBf\in P[\Sigma_\omega(S)]=\Sigma_\omega(T)$. 

Thus, by \cite[Theorem 2.15]{RJH}, $Pf\in\text{dom}(A^\omega)$, and furthermore, $APf=PBf$, as wanted.

$\bullet$ Next, we prove that $PAP\preceq B$.

Let $f\in\text{dom}(PAP)$, which means that $Pf\in\text{dom}(A)$. Then there exists $\omega\in\mathbf{R}$ such that $Pf\in\text{dom}(A^\omega)$. Fix $s>0$. Then the function $T(\cdot)Pf$ is differentiable at $s$ with $y=dT(s)Pf/dt$. By the definition, for every $\varepsilon>0$, there exists $\delta=\delta(s,\varepsilon)>0$ such that
$$
\left\|\dfrac{T(t)Pf-T(s)Pf}{t-s}-y\right\|<\varepsilon,\quad\forall |t-s|<\delta,
$$
Thus, for every $t$ with $|t-s|<\delta$, we have
\begin{eqnarray*}
\left\|\dfrac{S(t)f-S(s)f}{t-s}-Py\right\|
&=&\left\|\dfrac{PT(t)Pf-PT(s)Pf}{t-s}-Py\right\|\\
&=&\left\|\dfrac{T(t)Pf-T(s)Pf}{t-s}-y\right\|<\varepsilon.
\end{eqnarray*}
The above inequality means that the function $S(\cdot)f$ is differentiable at $s$.

We infer
$$
APf=A^\omega Pf=\lim\limits_{t\to 0^+}\dfrac{T(t)Pf-Pf}{t}\in\Sigma_\omega(T),
$$
and hence
\begin{eqnarray*}
PAPf &=& P\left[\lim\limits_{t\to 0^+}\dfrac{T(t)Pf-Pf}{t}\right]=\lim\limits_{t\to 0^+}\dfrac{PT(t)Pf-f}{t}\\
&=&\lim\limits_{t\to 0^+}\dfrac{S(t)f-f}{t}\in P[\Sigma_\omega(T)]=\Sigma_\omega(S),
\end{eqnarray*}
where the second equality holds as the operator $P$ is bounded. Note that that $Pf\in\text{dom}(A^\omega)\subseteq\Sigma_\omega(T)$ implies $f\in P[\Sigma_\omega(T)]=\Sigma_\omega(S)$.

Thus, by \cite[Theorem 2.15]{RJH}, $f\in\text{dom}(B^\omega)$, and furthermore $Bf=PAPf$, as claimed.
\end{proof}

\subsection{Main results}
With all preparation in place, we turn to a Stone-type theorem for complex symmetric, unbounded semigroups.
\begin{defn}\label{defn-bff-cso-semi-unbdd}
Let $\calC$ be a conjugation on a separable, complex Hilbert space $\calH$. An unbounded $C_0$-(semi)group $(T(t))_{t\geq 0}$ is called (i) \emph{$\calC$-symmetric} if each operator $T(t)$ is $\calC$-symmetric; (ii) \emph{$\calC$-selfadjoint} if each operator $T(t)$ is $\calC$-selfadjoint.
\end{defn}

In view of the definition of complex symmetric operators, we separate the discussion into three cases:

(i) $T(t)$ is non-densely defined and is $\calC$-symmetric.

(ii) $T(t)$ is densely defined and is $\calC$-symmetric.

(ii) $T(t)$ is densely defined and is $\calC$-selfadjoint.

- In the first case, to discuss the complex symmetry we must use the identity \eqref{C-symmetry}.
\begin{thm}\label{stone-type-thm1}
Let $(T(t))_{t\geq 0}$ be an unbounded $C_0$-semigroup on a separable, complex Hilbert space $\calH$ with generator $A$, and $\calC$ a conjugation. If $(T(t))_{t\geq 0}$ is $\calC$-symmetric, then the generator $A$ is $\calC$-symmetric.
\end{thm}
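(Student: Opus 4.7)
The plan is to reduce the $\calC$-symmetry of the generator to a passage to the limit in the $\calC$-symmetry identity satisfied by each $T(t)$. The crucial tool is Proposition \ref{prop-2}(2), which guarantees that for $x \in \text{dom}(A)$ the limit $\lim_{t\to 0^+}(T(t)x-x)/t = Ax$ exists in the \emph{ambient norm} of $\calH$ (not merely in the $N_\omega$-norm), so that standard continuity of the inner product applies.

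First I would fix arbitrary $x,y \in \text{dom}(A)$. Since $\text{dom}(A) \subseteq \mfd(T) \subseteq D(T) = \bigcap_{t\geq 0}\text{dom}[T(t)]$, both $x$ and $y$ belong to $\text{dom}[T(t)]$ for every $t \geq 0$, so the $\calC$-symmetry identity \eqref{C-symmetry} for the unbounded operator $T(t)$ yields
$$
\langle \calC x, T(t)y \rangle \;=\; \langle \calC T(t)x, y \rangle, \qquad \forall\, t \geq 0.
$$
Subtracting $\langle \calC x, y\rangle = \langle \calC x, y\rangle$ from both sides, using the antilinearity and isometry of $\calC$, and dividing by $t > 0$, I obtain
$$
\Big\langle \calC x, \frac{T(t)y - y}{t}\Big\rangle \;=\; \Big\langle \calC\!\left(\frac{T(t)x - x}{t}\right), y\Big\rangle.
$$

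Next I would pass to the limit $t \to 0^+$. By Proposition \ref{prop-2}(2) the quotients $(T(t)x-x)/t$ and $(T(t)y-y)/t$ converge in the norm of $\calH$ to $Ax$ and $Ay$ respectively; antilinear isometry of $\calC$ then upgrades $(T(t)x-x)/t \to Ax$ to $\calC\bigl((T(t)x-x)/t\bigr) \to \calC Ax$. Continuity of the inner product on both sides gives
$$
\langle \calC x, Ay \rangle \;=\; \langle \calC Ax, y \rangle, \qquad \forall\, x,y \in \text{dom}(A),
$$
which is precisely the $\calC$-symmetry identity \eqref{C-symmetry} for $A$.

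There is essentially no genuine obstacle beyond confirming the right regularity to apply \eqref{C-symmetry} and to take limits; the only point that deserves care is the distinction between convergence in the weighted norm $N_\omega$ (which is what the construction of $A^\omega$ naturally provides) and convergence in the ambient norm of $\calH$, and this is exactly the content of Proposition \ref{prop-2}(2) that makes the limit interchange with the inner product legitimate.
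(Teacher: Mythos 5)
Your proposal is correct and follows essentially the same route as the paper's own proof: apply the $\calC$-symmetry identity to $T(t)$ on $\text{dom}(A)\subseteq\text{dom}[T(t)]$, form the difference quotients, and let $t\to 0^+$. The only addition is your explicit appeal to Proposition \ref{prop-2}(2) to justify norm convergence of the quotients in $\calH$, a point the paper uses implicitly.
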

\begin{proof}
Since $(T(t))_{t\geq 0}$ is $\calC$-symmetric, for every $x,y\in\text{dom}(A)\subseteq\text{dom}[T(t)]$ we have
$$
\langle \calC x,T(t)y\rangle=\langle \calC T(t)x,y\rangle,
$$
which gives
$$
\langle \calC x,\dfrac{T(t)y-y}{t}\rangle=\langle \dfrac{\calC [T(t)x-x]}{t},y\rangle.
$$
Letting $t\to 0^+$, we get
$$
\langle \calC x,Ay\rangle=\langle \calC Ax,y\rangle.
$$
Thus, the generator $A$ is $\calC$-symmetric.
\end{proof}

- For the second situation, the adjoint $T(t)^*$ is well-defined and hence the $\calC$-symmetry of $(T(t))_{t\geq 0}$ is equivalent to the fact that $T(t)\preceq\calC T(t)^*\calC$ for every $t\geq 0$. Thus, we can use this inclusion to explore more properties regarding to the generator of a complex symmetric, unbounded semigroup.
\begin{thm}\label{prop-stone-thm}
Let $(T(t))_{t\geq 0}$ be an unbounded $C_0$-semigroup on a separable, complex Hilbert space $\calH$ with generator $A$, and $\calC$ a conjugation. Assume that $T(t)$ is densely defined for all fixed $t\geq 0$. If $(T(t))_{t\geq 0}$ is $\calC$-symmetric, then
\begin{enumerate}
\item $(T(t)^*)_{t\geq 0}$ is an unbounded $C_0$-semigroup;
\item $A\preceq \calC B\calC$, where $B$ is the generator of $(T(t)^*)_{t\geq 0}$;
\item the operator $A$ is $\calC$-symmetric.
\end{enumerate} 
\end{thm}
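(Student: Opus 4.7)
The three items reduce cleanly to the transport lemmas already at our disposal. The key observation is that the $\calC$-symmetry $T(t)\preceq \calC T(t)^{*}\calC$, which is meaningful because $T(t)$ is assumed densely defined, is equivalent (by applying the involution $\calC$ on both sides of the inclusion, using $\calC^{2}=I$) to
\[
\calC T(t)\calC \;\preceq\; T(t)^{*},\qquad t\geq 0.
\]
This is the only ``move'' that requires attention; from here each item is a one-step invocation.

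For (1), Proposition \ref{jnbv} applied with the anti-linear isometric involution $P=\calC$ shows that the family $(\calC T(t)\calC)_{t\geq 0}$ is itself an unbounded $C_0$-semigroup, with generator $\calC A\calC$. Invoking Proposition \ref{T<V} with this semigroup in the ``$T$''-slot and $(T(t)^{*})_{t\geq 0}$ in the ``$V$''-slot then yields item (1). For (2), the same application of Proposition \ref{T<V} supplies $(\calC A\calC)^{\omega}\preceq B^{\omega}$ for every $\omega$; because the domain of a generator is the union $\bigcup_{\omega}\text{dom}((\cdot)^{\omega})$ and the operators $A^{\omega}$ are coherent in $\omega$, passing to the union gives $\calC A\calC \preceq B$, and conjugating by $\calC$ on both sides turns this into $A\preceq \calC B\calC$.

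Item (3) requires no additional argument: it is exactly the conclusion of Theorem \ref{stone-type-thm1} applied to the $\calC$-symmetric semigroup $(T(t))_{t\geq 0}$, and can be recorded here for completeness. The main obstacle, if any, lies at the very first step: one must verify that the inclusion $T(t)\preceq \calC T(t)^{*}\calC$ really does invert to $\calC T(t)\calC \preceq T(t)^{*}$ (a routine check, but one needs the dense-definedness of $T(t)$ so that $T(t)^{*}$ is actually a well-defined closed operator), and that the partial order $\preceq$ transfers through Propositions \ref{jnbv} and \ref{T<V} cleanly between the semigroup and generator levels. Everything after that is assembly.
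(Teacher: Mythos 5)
Your proposal is correct and follows essentially the same route as the paper: both arguments combine Proposition \ref{jnbv} (conjugation by $\calC$ preserves unbounded $C_0$-semigroups and transports generators) with Proposition \ref{T<V} (extension of semigroups implies extension of generators), and both obtain item (3) from Theorem \ref{stone-type-thm1}. The only difference is cosmetic — you conjugate the inclusion to $\calC T(t)\calC\preceq T(t)^{*}$ and apply the two lemmas in the opposite order from the paper, which works equally well.
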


\begin{proof}
Let us define the family $(V(t))_{t\geq 0}$ be setting $V(t)=\calC T(t)^*\calC$.

(1) Since $T(t)\preceq \calC T(t)^*\calC=V(t)$ , by Proposition \ref{T<V}, the family $(V(t))_{t\geq 0}$ is an unbounded $C_0$-semigroup, and hence by Proposition \ref{jnbv} the family $(T(t)^*)_{t\geq 0}$ is also an unbounded $C_0$-semigroup.

(2) Proposition \ref{jnbv} shows the generator of $(V(t))_{t\geq 0}$ is precisely $\calC B\calC$. Thus, this conclusion follows directly from Proposition \ref{T<V}.

(3) This conclusion follows from Theorem \ref{stone-type-thm1}.
\end{proof}

- For the last case, we derive the equality of the operator inclusion stated in Theorem \ref{prop-stone-thm}(2).
\begin{thm}\label{stone-1}
Let $(T(t))_{t\geq 0}$ be an unbounded $C_0$-semigroup on a separable, complex Hilbert space with generator $A$, and $\calC$ a conjugation. Assume that $T(t)$ is densely defined for all fixed $t\geq 0$. If the family $(T(t))_{t\geq 0}$ is $\calC$-selfadjoint, then 
\begin{enumerate}
\item $(T(t)^*)_{t\geq 0}$ is an unbounded $C_0$-semigroup;
\item $A= \calC B\calC$, where $B$ is the generator of $(T(t)^*)_{t\geq 0}$;
\item the operator $A$ is $\calC$-symmetric.
\end{enumerate} 
\end{thm}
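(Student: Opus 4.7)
The plan is to deduce this theorem from Theorem \ref{prop-stone-thm} together with Proposition \ref{jnbv}, exploiting the fact that $\calC$-selfadjointness promotes the operator inclusions appearing in Theorem \ref{prop-stone-thm} to genuine equalities.

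First, every $\calC$-selfadjoint operator is in particular $\calC$-symmetric, so the hypotheses of Theorem \ref{prop-stone-thm} are automatically met. Parts (1) and (3) of the present theorem are then immediate consequences: Theorem \ref{prop-stone-thm}(1) gives that $(T(t)^*)_{t\geq 0}$ is an unbounded $C_0$-semigroup, and Theorem \ref{prop-stone-thm}(3) (equivalently, Theorem \ref{stone-type-thm1}) gives that the generator $A$ is $\calC$-symmetric.

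The substantive content is part (2), the upgrade from $A \preceq \calC B \calC$ to the equality $A = \calC B \calC$. Let $B$ denote the generator of $(T(t)^*)_{t\geq 0}$ produced in part (1). I would apply Proposition \ref{jnbv} with the anti-linear isometric involution $P = \calC$, but now \emph{to the adjoint semigroup} $(T(t)^*)_{t\geq 0}$: this yields that $(\calC T(t)^* \calC)_{t\geq 0}$ is itself an unbounded $C_0$-semigroup, with generator $\calC B \calC$. The hypothesis of $\calC$-selfadjointness says precisely $T(t) = \calC T(t)^* \calC$ for every $t \geq 0$, so the transported semigroup coincides termwise with $(T(t))_{t\geq 0}$, whose generator is by definition $A$. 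Uniqueness of the generator of an unbounded $C_0$-semigroup then forces $A = \calC B \calC$.

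The main delicacy, and arguably the only one, is to ensure that Proposition \ref{jnbv} genuinely applies when $P$ is anti-linear (which is explicit in its statement) and that generators of unbounded $C_0$-semigroups are uniquely determined, so that two competing descriptions of the generator of $(T(t))_{t\geq 0}$ must coincide. No further domain bookkeeping is required, because under $\calC$-selfadjointness the conjugation intertwines the two semigroups \emph{exactly} rather than merely restricting one into the other; this is the only place where the full strength of $\calC$-selfadjointness, as opposed to $\calC$-symmetry, enters.
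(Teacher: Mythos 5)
Your proposal is correct and follows essentially the same route as the paper: the paper likewise derives parts (1) and (3) from Theorem \ref{prop-stone-thm} and obtains part (2) by invoking Proposition \ref{jnbv}, i.e.\ applying it with $P=\calC$ to the adjoint semigroup so that $(\calC T(t)^*\calC)_{t\geq 0}$ has generator $\calC B\calC$ and coincides termwise with $(T(t))_{t\geq 0}$ by $\calC$-selfadjointness. Your version merely spells out the uniqueness-of-generator step that the paper leaves implicit.
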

\begin{proof}
The conclusions (1) and (3) follow directly from Theorem \ref{prop-stone-thm}, while the conclusion (2) holds by Proposition \ref{jnbv}.
\end{proof}

\begin{rem}
We end this section with the note that the generator of a $\calC$-selfadjoint, unbounded semigroup is not necessarily $\calC$-selfadjoint. The next section, devoted to complex symmetry in Fock space, will provide examples supporting this statement.
\end{rem}

\section{Preliminaries on Fock space}\label{Fock-sp}
\subsection{Fock space}
The \emph{Fock space} $\calF^2$ (sometimes called the \emph{Segal-Bargmann space}) consists of entire functions which are square integrable with respect to the Gaussian measure $\frac{1}{\pi}e^{-|z|^2}\;dV(z)$, where $dV$ is the Lebesgue measure on $\mathbf{C}$. This is a reproducing kernel Hilbert space, with inner product
$$
\langle f,g\rangle=\dfrac{1}{\pi}\int_{\mathbf{C}} f(z)\overline{g(z)}e^{-|z|^2}\;dV(z),
$$
and kernel function
$$
K_z(u)=e^{u\overline{z}},\quad z,u\in\mathbf{C}.
$$
The set $\{\bfe_k\}_{k\in\mathbf{N}}$, where $\bfe_k(z)=z^k$, is an orthogonal basis for $\calF^2$ with
\begin{equation*}
\langle \bfe_m,\bfe_n\rangle=
\begin{cases}
0,\quad\text{if $m\ne n$},\\
n!,\quad\text{if $m=n$}.
\end{cases}
\end{equation*}
It was proved in \cite{HK1}, that the Fock space $\calF^2$ carries a three-parameter family of anti-linear, isometric involutions. These conjugations, known as \emph{weighted composition conjugations}, are described as follows
\begin{equation}\label{Ca,b,c-wcc}
\calC_{a,b,c}f(z)=ce^{bz}\overline{f\left(\overline{az+b}\right)},\quad f\in\calF^2,
\end{equation}
where $a,b,c$ are complex constants satisfying
\begin{equation}\label{abc-cond}
|a|=1, \quad \bar{a}b+\bar{b}=0, \quad |c|^2e^{|b|^2}=1,
\end{equation}

\subsection{Weighted composition operator}
Consider formal \emph{weighted composition expressions} of the form
$$
E(\psi,\varphi)f=\psi\cdot f\circ\varphi,
$$
where $\psi,\varphi$ are entire functions. Operator theorists are interested in the operators arising from the formal expression $E(\psi,\varphi)$ in $\calF^2$. One of such operators is the \emph{maximal weighted composition operator} defined by
$$
\text{dom}(W_{\psi,\varphi,\max})=\{f\in\calF^2:E(\psi,\varphi)f\in\calF^2\},
$$
$$
W_{\psi,\varphi,\max}f=E(\psi,\varphi)f,\quad\forall f\in\text{dom}(W_{\psi,\varphi,\max}).
$$
The domain $\text{dom}(W_{\psi,\varphi,\max})$ is called \emph{maximal}. The operator $W_{\psi,\varphi,\max}$ is ``maximal" in the sense that it cannot be extended as an operator in $\calF^2$ generated by the expression $E(\psi,\varphi)$ (see \cite{hai2018some}). The specification of the domain is crucial when dealing with unbounded linear operators. Considered on different domains, the same formal expression may generate operators with completely different properties. This observation prompts us to consider the weighted composition expressions on subspaces of the maximal domain. The operator $W_{\psi,\varphi}$ is called an \emph{unbounded weighted composition operator} if $W_{\psi,\varphi}\preceq W_{\psi,\varphi,\max}$; namely the domain $\text{dom}(W_{\psi,\varphi})$ is a subspace of the maximal domain $\text{dom}(W_{\psi,\varphi,\max})$, and the operator $W_{\psi,\varphi}$ is the restriction of the maximal operator $W_{\psi,\varphi,\max}$ on $\text{dom}(W_{\psi,\varphi})$.

A characterization for bounded weighted composition operators was given in \cite{TL}, where duality techniques play a key role. In \cite{HK2}, the authors provided a different proof, which does not refer to adjoint operators. For later use, we recall a particular form from \cite{HK2}.
\begin{prop}[\cite{HK2}]\label{exaa}
Let $\varphi(z)=Az+B$, $\psi(z)=Ce^{Dz}$, where $A,B,C,D$ are complex constants. The operator $W_{\psi,\varphi}$ is bounded on $\calF^2$ if and only if
    \begin{enumerate}
    \item either $|A|<1$,
    \item or $|A|=1$, $D+A\overline{B}=0$.
    \end{enumerate}
\end{prop}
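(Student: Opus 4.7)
The plan is to deduce both directions from a single computation of $|\psi(w)|^{2}e^{|\varphi(w)|^{2}-|w|^{2}}$ and then carry out one linear change of variables.

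For necessity, I would use the reproducing property of $\calF^{2}$ to check that $W_{\psi,\varphi}^{*}K_{w}=\overline{\psi(w)}\,K_{\varphi(w)}$ for every $w\in\mathbf{C}$. Since $\|K_{w}\|^{2}=e^{|w|^{2}}$, boundedness of $W_{\psi,\varphi}$ forces the pointwise lower bound
$$\|W_{\psi,\varphi}\|^{2}\;\geq\;|\psi(w)|^{2}\,e^{|\varphi(w)|^{2}-|w|^{2}},\qquad w\in\mathbf{C}.$$
Substituting $\psi(w)=Ce^{Dw}$, $\varphi(w)=Aw+B$ and expanding $|Aw+B|^{2}=|A|^{2}|w|^{2}+2\re(A\overline{B}w)+|B|^{2}$, the right-hand side becomes
$$|C|^{2}e^{|B|^{2}}\,e^{(|A|^{2}-1)|w|^{2}}\,e^{2\re((D+A\overline{B})w)}.$$
This is bounded in $w$ if and only if either $|A|<1$ (the negative quadratic exponent absorbs any linear growth), or $|A|=1$ together with $D+A\overline{B}=0$ (both exponents vanish). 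Every other combination produces an unbounded operator.

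For sufficiency I would check that in each of the two permitted cases the operator really is bounded. When $A=0$ the image is contained in $\mathbf{C}\cdot e^{Dz}\subset\calF^{2}$, so $W_{\psi,\varphi}$ has rank one and is trivially bounded. For $A\neq 0$ I would substitute $w=Az+B$ inside $\|W_{\psi,\varphi}f\|^{2}$, obtaining
$$\|W_{\psi,\varphi}f\|^{2}=\frac{|C|^{2}}{\pi|A|^{2}}\int_{\mathbf{C}}|f(w)|^{2}\exp\!\bigl(2\re(D(w-B)/A)-|w-B|^{2}/|A|^{2}\bigr)\,dV(w).$$
Comparison with $\|f\|^{2}=\frac{1}{\pi}\int|f(w)|^{2}e^{-|w|^{2}}\,dV(w)$ reduces the whole question to whether
$$E(w):=2\re(D(w-B)/A)-|w-B|^{2}/|A|^{2}+|w|^{2}$$
is bounded above on $\mathbf{C}$. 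Its pure quadratic part equals $(1-|A|^{-2})|w|^{2}$; when $|A|<1$ this coefficient is strictly negative, so $E$ is automatically bounded above. When $|A|=1$ the identity $D+A\overline{B}=0$ collapses both the quadratic and the linear parts of $E$ to constants, and a short calculation in fact yields the sharp identity $\|W_{\psi,\varphi}f\|=|C|e^{|B|^{2}/2}\|f\|$, so $W_{\psi,\varphi}$ is a scalar multiple of an isometry.

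The main obstacle, in both directions, is the algebraic bookkeeping that identifies the linear-in-$w$ coefficient as exactly $D+A\overline{B}$: the cross terms coming from $|Aw+B|^{2}$ must be paired with those from $|e^{Dw}|^{2}=e^{2\re(Dw)}$ in the correct way. Once this single identity is in place, the trichotomy among $|A|<1$, $|A|=1$ with $D+A\overline{B}=0$, and the remaining unbounded cases follows immediately.
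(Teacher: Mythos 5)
Your proof is correct. Note first that the paper itself offers no proof of this proposition: it is quoted from \cite{HK2}, with the remark that \cite{TL} proved it by duality techniques while \cite{HK2} gave a different proof that avoids adjoint operators. Your argument is, in effect, the \cite{TL}-style route for necessity and a direct computation for sufficiency. The necessity step via $W_{\psi,\varphi}^{*}K_{w}=\overline{\psi(w)}K_{\varphi(w)}$ and the resulting bound $\|W_{\psi,\varphi}\|^{2}\geq |C|^{2}e^{|B|^{2}}e^{(|A|^{2}-1)|w|^{2}}e^{2\re((D+A\overline{B})w)}$ is exactly right, and the trichotomy you extract from it is sound: for $|A|>1$ the quadratic term blows up, for $|A|=1$ with $D+A\overline{B}\neq 0$ the linear term blows up along $w=t\,\overline{D+A\overline{B}}$, and for $|A|<1$ the negative quadratic absorbs the linear term. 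The sufficiency step by substituting $w=Az+B$ is also correct (with the rank-one case $A=0$ properly split off), and I verified your sharp identity in the case $|A|=1$, $D=-A\overline{B}$: the exponent $E(w)$ collapses to the constant $|B|^{2}$ and the Jacobian factor is $|C|^{2}/|A|^{2}=|C|^{2}$, giving $\|W_{\psi,\varphi}f\|=|C|e^{|B|^{2}/2}\|f\|$, consistent with the normalization \eqref{abc-cond} of the conjugations $\calC_{a,b,c}$. What the adjoint-free proof of \cite{HK2} buys is applicability in settings where one does not want to presuppose dense definedness or manipulate $W^{*}$; what your route buys is brevity, since the single exponent $(|A|^{2}-1)|w|^{2}+2\re((D+A\overline{B})w)$ governs both directions. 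The only point worth making explicit is that the identity $W_{\psi,\varphi}^{*}K_{w}=\overline{\psi(w)}K_{\varphi(w)}$ requires $W_{\psi,\varphi}$ to be densely defined so that $W_{\psi,\varphi}^{*}$ exists; since the polynomials lie in the maximal domain for these symbols, this is automatic here, but it deserves one sentence.
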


A characterization for an unbounded weighted composition operator which is $\calC$-selfadjoint with respect to the weighted composition conjugation $\calC_{a,b,c}$ (or simply: \emph{$\calC_{a,b,c}$-selfadjoint}) was carried out in \cite{hai2018some}. It turns out that a $\calC_{a,b,c}$-selfadjoint, unbounded weighted composition operator must be necessarily maximal and the symbols $\psi,\varphi$ can be precisely computed.
\begin{prop}[{\cite{hai2018some}}]\label{abc-selfadjointness}
Let $W_{\psi,\varphi}$ be an unbounded weighted composition operator, induced by the symbols $\psi$, $\varphi$ with $\psi\not\equiv 0$. Furthermore, let $\calC_{a,b,c}$ be a weighted composition conjugation. Then the operator $W_{\psi,\varphi}$ is $\calC_{a,b,c}$-selfadjoint if and only if the following conditions hold.
\begin{enumerate}
\item $W_{\psi,\varphi}=W_{\psi,\varphi,\max}$.
\item The symbols are of the following forms 
\begin{equation*}
\varphi(z)=Az+B,\quad\psi(z) = Ce^{Dz},\ \hbox{with $C\ne 0,\ D=aB-bA+b$}.
\end{equation*}
\end{enumerate}
\end{prop}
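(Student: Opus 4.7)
The plan is to reduce the operator identity $W_{\psi,\varphi} = \calC_{a,b,c} W_{\psi,\varphi}^* \calC_{a,b,c}$ to a functional equation by testing it against reproducing kernels. Two elementary computations are the basis: the standard adjoint formula $W_{\psi,\varphi,\max}^* K_u = \overline{\psi(u)} K_{\varphi(u)}$, valid at every $u \in \mathbf{C}$, and the direct evaluation
\[
\calC_{a,b,c} K_u \;=\; c\, e^{bu}\, K_{\overline{au+b}},
\]
which uses the constraints $|a|=1$ and $\bar a b + \bar b = 0$ from \eqref{abc-cond}. Combining these with the definition of $\calC_{a,b,c}$ and the $\calC$-selfadjointness relation produces the master functional identity
\[
|c|^2\, e^{bv + \bar b\bar u + \overline{\varphi(\overline{av+b})}(\bar a \bar u + \bar b)}\, \overline{\psi(\overline{av+b})} \;=\; \overline{\psi(u)}\, e^{v\overline{\varphi(u)}},\qquad u,v\in\mathbf{C}.
\]

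For the only-if direction I would argue in three algebraic steps from this identity. First, setting $v=0$ collapses the left-hand side to a constant multiple of a single exponential in $\bar u$, forcing $\overline{\psi(u)}$ to be such an exponential and hence $\psi(z)=Ce^{Dz}$ with $C\neq 0$ (the case $C=0$ is excluded by $\psi\not\equiv 0$). Second, both sides are entire in $v$ and anti-entire in $u$; expanding $\overline{\varphi(u)} = \sum_{k\geq 0} \overline{\varphi_k}\,\bar u^k$ and matching the coefficient of $v\,\bar u^k$ for $k\geq 2$ shows the right-hand side contributes $\overline{\varphi_k}$ while the left-hand side is linear in $\bar u$, forcing $\varphi_k=0$ for $k\geq 2$, i.e.\ $\varphi(z)=Az+B$. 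Third, with these forms substituted in, matching the coefficient of $\bar u$ on each side and simplifying via $\bar b = -\bar a b$ and $|c|^2 = e^{-|b|^2}$ produces exactly $D = aB - bA + b$; the remaining coefficients (of $v$, of the constant, and of $v\bar u$) then become automatic identities.

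For the if direction I would reverse the process: given $\varphi, \psi$ as in (2) with $D=aB-bA+b$, I substitute into the functional equation and verify, using all three parts of \eqref{abc-cond} together with the new relation, that it is an identity in $u, v$; this says $\calC_{a,b,c} W_{\psi,\varphi,\max} \calC_{a,b,c} = W_{\psi,\varphi,\max}^*$ on the dense span of kernels and hence as closed operators, so $W_{\psi,\varphi,\max}$ is $\calC_{a,b,c}$-selfadjoint. For the maximality assertion (1) a short domain-chasing argument finishes the picture: if $W_{\psi,\varphi} \preceq W_{\psi,\varphi,\max}$ is $\calC$-selfadjoint, the reversal of inclusions under adjoints gives $W_{\psi,\varphi,\max}^* \preceq W_{\psi,\varphi}^*$, so conjugating yields $W_{\psi,\varphi,\max} = \calC W_{\psi,\varphi,\max}^* \calC \preceq \calC W_{\psi,\varphi}^* \calC = W_{\psi,\varphi}$, which together with the opposite inclusion forces equality.

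The main obstacle is the coefficient-matching in the third step of the only-if direction: the emergence of the precise identity $D = aB - bA + b$ requires simultaneous use of all three parameter constraints in \eqref{abc-cond} along with the affine form of $\varphi$, and several terms cancel only after invoking $\bar b = -\bar a b$. A secondary technical point is that the derivation of the functional equation uses $K_u \in \text{dom}(W_{\psi,\varphi}^*)$; this is justified because $\calC$-selfadjointness ensures $W_{\psi,\varphi}$ is densely defined and closed, while $f \mapsto \langle W_{\psi,\varphi} f, K_u\rangle = \psi(u) f(\varphi(u))$ is a bounded linear functional on $\text{dom}(W_{\psi,\varphi})$ via the reproducing property.
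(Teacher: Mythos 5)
The paper itself does not prove this proposition; it is quoted verbatim from \cite{hai2018some}, so there is no in-text argument to compare against. Judged on its own terms, your necessity direction is sound: $K_u\in\text{dom}(W_{\psi,\varphi}^*)$ with $W_{\psi,\varphi}^*K_u=\overline{\psi(u)}K_{\varphi(u)}$ follows from density of the domain and the reproducing property, $\calC_{a,b,c}K_u=ce^{bu}K_{\overline{au+b}}$ is a direct computation, and since $\text{dom}(W_{\psi,\varphi})=\calC_{a,b,c}\,\text{dom}(W_{\psi,\varphi}^*)$ the vectors $\calC_{a,b,c}K_v$ do lie in the domain, so the master identity is legitimate. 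The three coefficient extractions then work as you describe (I checked: the $\bar u$-coefficient gives $\bar D=\bar b+\bar a b\bar A+\bar a\bar B$, which becomes $D=aB-bA+b$ using $a\bar b=-b$; the remaining coefficients are automatic, the constant term needing $|c|^2e^{|b|^2}=1$). The closing domain-chasing argument for maximality, $\calC W_{\psi,\varphi,\max}^*\calC\preceq\calC W_{\psi,\varphi}^*\calC=W_{\psi,\varphi}\preceq W_{\psi,\varphi,\max}$, is also correct.

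The genuine gap is in the sufficiency direction, at the step ``$\calC_{a,b,c}W_{\psi,\varphi,\max}\calC_{a,b,c}=W_{\psi,\varphi,\max}^*$ on the dense span of kernels and hence as closed operators.'' Two closed unbounded operators that agree on a dense subspace need not be equal; they can have genuinely different domains, and the whole content of assertion (1) is precisely a statement about domains. To close this you need one of: (i) that the span of the kernels is a \emph{core} for $W_{\psi,\varphi,\max}$ (dense in graph norm), which is not automatic and is not argued; (ii) a direct verification of the symmetry identity $\langle\calC_{a,b,c}f,W_{\psi,\varphi,\max}g\rangle=\langle\calC_{a,b,c}W_{\psi,\varphi,\max}f,g\rangle$ for \emph{all} $f,g$ in the maximal domain (a change-of-variables computation in the Gaussian integral), which yields $W_{\psi,\varphi,\max}\preceq\calC_{a,b,c}W_{\psi,\varphi,\max}^*\calC_{a,b,c}$ and, combined with your adjoint-reversal argument, already forces equality; or (iii) an explicit identification of $W_{\psi,\varphi,\max}^*$ as a maximal weighted composition operator with transformed symbols, so that the two domains can be compared directly. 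Note also a small overstatement: the relation $D=aB-bA+b$ uses only $|a|=1$ and $\bar ab+\bar b=0$; the third constraint $|c|^2e^{|b|^2}=1$ is consumed by the constant term, not by the $\bar u$-coefficient.
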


\subsection{Weighted composition semigroups}
To construct $\calC_{a,b,c}$-selfadjoint, unbounded semigroups on the Fock space $\calF^2$, we rely on semigroups of weighted composition operators (or simply: \emph{weighted composition semigroups}). The notions of semiflows and semicocycles are important in defining these semigroups.

\begin{defn}\label{semiflow}
A family $(\zeta_t)_{t \geq 0}$ of nonconstant entire functions on $\mathbf{C}$ is called a \emph{semiflow} if
\begin{enumerate}
\item $\zeta_0(z)=z$, $\forall z \in \mathbf{C}$;
\item $\zeta_{t+s}(z)=\zeta_t ( \zeta_s (z))$, $\forall t, s \geq 0,$ $\forall z \in\mathbf{C}$.
\end{enumerate}
Likewise, if $t$, $s \in\mathbf{R}$, then it is called a \emph{flow}.
\end{defn}

A trivial example of a semiflow is $\zeta_t(z)=z,\ \forall t\ge0$. For a nontrivial semiflow, its structure was given by the following proposition.

\begin{prop}[\cite{JTTY}]\label{class}
If the family $(\zeta_t)_{t \geq 0}$ is a nontrivial semiflow on $\mathbf{C}$, then it satisfies
\begin{equation}
\text{either\quad}\zeta_t(z) = z+Et,
\end{equation}
\begin{equation}
\text{or\quad}\zeta_t(z) = e^{t\ell}z+G(1-e^{t\ell}),
\end{equation}
where $E,G,\ell$ are complex constants with $E\ne 0$ and $\ell\ne 0$. 
\end{prop}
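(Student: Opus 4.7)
The strategy is to reduce the classification to a purely algebraic problem by first showing that every $\zeta_t$ is affine, and then solving the resulting functional equations for the one or two complex parameters that arise. I would begin by establishing the affinity of $\zeta_t$. The iterative structure $\zeta_t=\zeta_{t/n}\circ\cdots\circ\zeta_{t/n}$ ($n$-fold composition) yields a rigid degree constraint: if $\zeta_{t/n}$ were a polynomial of degree $d\ge 2$, then $\zeta_t$ would have degree $d^n$ for every positive integer $n$, which, combined with the requirement that $\deg\zeta_s$ be a positive integer for every $s$, forces $d=1$. A transcendental entire $\zeta_t$ is excluded by growth/order considerations for composition of entire functions, or, more directly, using the pointwise continuity in $t$ implicit in the semigroup applications developed later in the paper, by observing that $\zeta_t\to\mathrm{id}$ locally uniformly as $t\to 0^+$, so that $\zeta_t$ is univalent on every disk for small $t>0$ and hence globally univalent on $\mathbf{C}$; every such entire function is necessarily of the form $az+b$. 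The semigroup property then propagates affinity from small $t$ to all $t\ge 0$. Accordingly, write $\zeta_t(z)=a_t z+b_t$ with $a_t\ne 0$ (nonconstancy rules out $a_t=0$).

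Substituting this affine form into $\zeta_{t+s}=\zeta_t\circ\zeta_s$ and comparing coefficients yields the coupled system
\begin{equation*}
a_{t+s}=a_t a_s,\qquad b_{t+s}=a_t b_s+b_t.
\end{equation*}
Together with $a_0=1$ and the measurability of $t\mapsto a_t$ inherited from continuity, the first identity integrates to $a_t=e^{t\ell}$ for some $\ell\in\mathbf{C}$.

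The classification then splits according to $\ell$. If $\ell=0$, then $a_t\equiv 1$ and the second equation reduces to the additive Cauchy equation $b_{t+s}=b_t+b_s$, so $b_t=tE$ and $\zeta_t(z)=z+tE$; nontriviality forces $E\ne 0$. If $\ell\ne 0$, pick $t_0>0$ with $a_{t_0}=e^{t_0\ell}\ne 1$, set $G:=b_{t_0}/(1-a_{t_0})$, and define $c_t:=b_t-G(1-a_t)$. A direct check shows that $c_t$ satisfies the same recursion $c_{t+s}=a_t c_s+c_t$, with $c_0=c_{t_0}=0$; evaluating $c_{t_0+s}$ in the two commuting orders $t_0+s$ and $s+t_0$ yields $a_{t_0}c_s=c_s$, hence $(a_{t_0}-1)c_s=0$ for every $s$, so $c_s\equiv 0$. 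Therefore $b_t=G(1-e^{t\ell})$, giving $\zeta_t(z)=e^{t\ell}z+G(1-e^{t\ell})$, as claimed.

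The main obstacle is the first step: ruling out transcendental $\zeta_t$ and establishing affinity. Degree counting handles the polynomial case cleanly, but excluding transcendental behavior requires either careful growth/order estimates for compositions of entire functions, exploiting $\zeta_t=\zeta_{t/n}^{\circ n}$, or a mild continuity hypothesis on $t\mapsto\zeta_t$ that forces $\zeta_t$ to be close to the identity, and hence univalent, for small $t>0$. Once affinity is in place, the remaining functional-equation analysis is routine.
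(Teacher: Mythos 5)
This proposition is quoted in the paper from \cite{JTTY} without proof, so there is no internal argument to compare against; I am judging your proposal on its own. Your second stage is correct and complete: once $\zeta_t(z)=a_tz+b_t$ is known, the coefficient identities $a_{t+s}=a_ta_s$ and $b_{t+s}=a_tb_s+b_t$, the integration of the multiplicative Cauchy equation to $a_t=e^{t\ell}$ (granted some regularity in $t$), and the elimination of the inhomogeneous term via $c_t=b_t-G(1-a_t)$ with the commutation trick $c_{t_0+s}=c_{s+t_0}$ are all sound, and they do produce exactly the two listed families.

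The gap is in the first stage, which is where the real content of the statement lives. Two concrete problems. First, the deduction ``$\zeta_t$ is univalent on every disk for small $t>0$ and hence globally univalent'' is a quantifier error: Hurwitz gives, for each disk $\{|z|<R\}$, a threshold $t_R>0$ below which $\zeta_t$ is injective there, but $t_R$ may shrink to $0$ as $R\to\infty$, so no single $t>0$ is shown to be injective on all of $\mathbf{C}$. (A repairable version localizes the argument: if $\zeta_{t_0}(z_1)=\zeta_{t_0}(z_2)$ with $z_1\ne z_2$, take the infimum $\tau>0$ of collision times, write $\zeta_\tau=\zeta_{\tau-s}\circ\zeta_s$, and apply the Hurwitz argument only near the single point $\zeta_\tau(z_1)$, where the two preimages $\zeta_s(z_1)\ne\zeta_s(z_2)$ both converge.) Second, the locally uniform convergence $\zeta_t\to\mathrm{id}$ as $t\to0^+$ is asserted rather than derived: Definition \ref{semiflow} in the paper imposes no continuity in $t$ at all, and even pointwise continuity does not upgrade to locally uniform convergence for a family of entire functions without a normality or local-boundedness input (Vitali--Montel). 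Some such hypothesis is genuinely needed --- without it the conclusion is false, since pathological solutions of Cauchy's equation give $a_t=e^{\phi(t)}$ with $\phi$ additive but nonlinear --- so it must be stated and then actually used to exclude transcendental $\zeta_t$; the alternative ``growth/order considerations'' route is not yet an argument, because nothing bounds the order of $\zeta_t$ a priori. Your degree-counting step for the polynomial case is fine (using that if $f\circ g$ is a polynomial with $f,g$ entire and nonconstant then both are polynomials), but it only becomes available after the transcendental case has been ruled out.
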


\begin{defn}
Let $(\zeta_t)_{t \geq 0}$ be a semiflow. A family $(\xi_t)_{t \geq 0}$ of entire functions on $\mathbf{C}$ is called a \emph{semicocycle} for $(\zeta_t)_{t \geq 0}$ if
\begin{enumerate}
\item the mapping $t \mapsto \xi_t(z)$ is differentiable for every $z \in\mathbf{C}$;
\item $\xi_{t+s}=\xi_t \cdot (\xi_s \circ \zeta_t)$, $\forall t, s \geq 0$;
\item $\xi_0=1$.
\end{enumerate}
\end{defn}

Let $(\zeta_t)_{t \geq 0}$ be a semiflow, and $(\xi_t)_{t \geq 0}$ be the corresponding semicocycle. Recall that a \emph{weighted composition semigroup} is defined by \eqref{semif-wco}, i.e.
$$
(W_{\xi,\zeta}(t)f)(z):=\xi_t(z)f(\zeta_t(z)),\ z\in\mathbf{C}.
$$

We recall a result from \cite{HK4}, in which the authors succeeded to characterize the family $(W_{\xi,\zeta}(t))_{t\ge0}$ when it is a $\calC_{a,b,c}$-symmetric, bounded semigroup on $\calF^2$.
\begin{prop}[{\cite[Theorem 4.5]{HK4}}]\label{semi-wco-fock}
Let $(W_{\xi,\zeta}(t))_{t\ge0}$ be a family defined by \eqref{semif-wco}. Then it is a $\calC_{a,b,c}$-symmetric, bounded semigroup on $\calF^2$ if and only if 
\begin{equation}\label{T(t)-ABCD-bdd}
\zeta_t(z)=A(t)z+B(t),\quad \xi_t(z)=C(t)e^{zD(t)},
\end{equation}
where the functions $A,B,C,D:\R_+\to\mathbf{C}$ satisfy
\begin{equation}\label{form1-bdd}
\text{either\,\,}
\begin{cases}
A(t)=1,\quad B(t)=Et,\quad D(t)=aEt,\\
C(t)=\exp\left(Ft+aE^2t^2/2\right),
\end{cases}
\end{equation}
\begin{equation}\label{form2-bdd}
\text{or\,\,}
\begin{cases}
A(t)=e^{\ell t},\quad B(t)=G(1-e^{\ell t}),\quad D(t)=(aG+b)(1-e^{\ell t}),\\
C(t)=\exp\left[ Ht+G(aG+b)(e^{\ell t}-\ell t-1)\right].
\end{cases}
\end{equation}
Here, $\ell,E,F,G,H$ are complex constants satisfying the following conditions
\begin{enumerate}
\item[(B1)] $\ell\ne 0$, $E\ne 0$;
\item[(B2)] $\overline{E}+aE=0$;
\item[(B3)] either $\re\ell<0$, or $\re\ell=0$, $aG+b-\overline{G}=0$.
\end{enumerate}
\end{prop}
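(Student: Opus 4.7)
The proof will be an ``if and only if'' argument combining three pre-established structural tools: Proposition \ref{abc-selfadjointness} (complex symmetry forces affine $\zeta$ and exponential $\xi$ with a linking identity for $D$), Proposition \ref{class} (semiflow classification gives the $t$-dependence of $\zeta_t$), and Proposition \ref{exaa} (boundedness of each weighted composition operator on $\calF^2$). The semicocycle functional equation then interlocks these pieces and determines $C(t)$.

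\smallskip

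For the forward direction, I first fix an arbitrary $t\ge 0$. Since $W_{\xi,\zeta}(t)$ is bounded and $\calC_{a,b,c}$-symmetric on $\calF^2$, it is $\calC_{a,b,c}$-selfadjoint, so Proposition \ref{abc-selfadjointness} forces
\[
\zeta_t(z)=A(t)z+B(t),\qquad \xi_t(z)=C(t)e^{D(t)z},
\]
with the identity $D(t)=aB(t)-bA(t)+b$. Plugging the affine form of $\zeta_t$ into Proposition \ref{class} (applied to the nontrivial semiflow $(\zeta_t)$) yields the two alternatives for $(A(t),B(t))$: either $A(t)=1$, $B(t)=Et$, or $A(t)=e^{\ell t}$, $B(t)=G(1-e^{\ell t})$. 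In each case, substituting into $D(t)=aB(t)-bA(t)+b$ reproduces precisely the formulas for $D(t)$ in \eqref{form1-bdd}--\eqref{form2-bdd}.

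\smallskip

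Next, I exploit the semicocycle identity $\xi_{t+s}=\xi_t\,(\xi_s\circ\zeta_t)$. With the exponential form of $\xi_t$, this translates into two separate scalar functional equations,
\[
D(t+s)=D(t)+A(t)D(s),\qquad C(t+s)=C(t)C(s)e^{D(s)B(t)},
\]
the first of which is automatic in both regimes, while the second together with $C(0)=1$ and differentiability of $t\mapsto \xi_t(z)$ uniquely determines $C(t)$ up to a one-parameter choice; a direct integration (or plugging in the ansatz) yields the formulas for $C(t)$ in \eqref{form1-bdd} and \eqref{form2-bdd} with free constants $F$ and $H$ respectively. Finally, applying Proposition \ref{exaa} to each $W_{\xi_t,\zeta_t}$ produces the side conditions (B2) and (B3): in regime \eqref{form1-bdd} one has $|A(t)|=1$ identically, so the bounding condition reads $D(t)+\overline{B(t)}=0$ for all $t$, i.e.\ $aE+\overline{E}=0$; in regime \eqref{form2-bdd} one either has $|A(t)|<1$ for all $t>0$ (equivalently $\re\ell<0$) or $|A(t)|=1$ (equivalently $\re\ell=0$), in which latter case a short calculation of $D(t)+A(t)\overline{B(t)}=(aG+b-\overline{G})(1-e^{\ell t})$ forces $aG+b-\overline{G}=0$.

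\smallskip

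The converse direction is routine verification: assuming the formulas in \eqref{T(t)-ABCD-bdd}--\eqref{form2-bdd} with (B1)--(B3), Proposition \ref{exaa} guarantees each $W_{\xi,\zeta}(t)$ is bounded, a direct check shows the semiflow and semicocycle axioms hold (hence so does the semigroup law), strong continuity is inherited from continuity of $A,B,C,D$ in $t$, and Proposition \ref{abc-selfadjointness} delivers $\calC_{a,b,c}$-selfadjointness of each member. The most delicate step of the whole argument is the bookkeeping needed to derive (B3) from the boundedness criterion in regime \eqref{form2-bdd} when $\re\ell=0$: one must be careful that the vanishing $(aG+b-\overline{G})(1-e^{\ell t})\equiv 0$ for all $t$ (rather than at a single value) forces $aG+b=\overline{G}$, which uses $\ell\ne 0$ from (B1).
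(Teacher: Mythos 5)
Your argument is correct and follows essentially the same route as the paper's: the paper recalls this proposition from \cite{HK4} without reproducing its proof, but its proof of the unbounded analogue (Proposition \ref{---->}, via Proposition \ref{abc-selfadjointness}, the semiflow classification of Proposition \ref{class}, the semicocycle functional equation and Lemma \ref{firstlem}) is exactly your forward direction, and the remark immediately following the proposition confirms that (B1)--(B3) arise precisely as the boundedness conditions of Proposition \ref{exaa}, which is how you derive them. The only point worth tightening is the converse's claim that ``strong continuity is inherited from continuity of $A,B,C,D$ in $t$'': for a family of bounded operators this needs norm convergence on a dense set (supplied, e.g., by the computation in Proposition \ref{<--2}) together with a local uniform bound on $\|W_{\xi,\zeta}(t)\|$, not merely pointwise continuity of the symbols.
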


Note that in view of Proposition \ref{exaa}, the conditions (B1-B3) in Proposition \ref{semi-wco-fock} are exactly the characterization for the weighted composition operator $W_{\xi,\zeta}(t)$ to be bounded on $\calF^2$. With these conditions, $W_{\xi,\zeta}(t)$ is $\calC_{a,b,c}$-symmetric in the sense of bounded operators if and only if it is $\calC_{a,b,c}$-symmetric on polynomials. Thus, conditions (B1-B3) play an indispensable role in proving Proposition \ref{semi-wco-fock}.

\section{Complex symmetric, unbounded semigroup on Fock space}\label{last-sec}
\subsection{Some initial properties}
The following lemma was proved in \cite[Lemma 4.1]{HK4} by using the function $\log(\cdot)$. We propose below a proof, which avoids logarithmic functions.

\begin{lem}\label{firstlem}
Suppose that the differentiable function $\Lambda:\mathbf{R}_+\to\mathbf{C}$ satisfy
$$
\begin{cases}
\Lambda(t+s)=\Lambda(t) \Lambda(s) e^{\Psi(t,s)},\ \forall t,s\ge0,\\
\Lambda(0)=1,\ \Psi(t,0)=0,
\end{cases}
$$
where $\Psi:\mathbf{R}_+\times\mathbf{R}_+\to\mathbf{C}$ is a differentiable function of two variables. Then the function $\Lambda(\cdot)$ must be of the form
$$
\Lambda(t)=\exp\left(t\Lambda '(0)+\int_0^t \frac{\partial\Psi}{\partial s}(\tau,0)d\tau\right).
$$
\end{lem}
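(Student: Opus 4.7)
The plan is to reduce the multiplicative-type functional equation to a linear first-order ODE, and then to read off the closed-form expression for $\Lambda$ by direct integration. All hypotheses needed are present: $\Lambda$ is differentiable, $\Psi$ is differentiable in both variables, and the normalizations $\Lambda(0)=1$, $\Psi(t,0)=0$ will kill the right boundary terms when we specialize.

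First, I would fix $t\geq 0$ and differentiate the identity $\Lambda(t+s)=\Lambda(t)\Lambda(s)e^{\Psi(t,s)}$ with respect to $s$. Applying the product rule on the right-hand side yields
\begin{equation*}
\Lambda'(t+s)=\Lambda(t)\Lambda'(s)e^{\Psi(t,s)}+\Lambda(t)\Lambda(s)e^{\Psi(t,s)}\frac{\partial\Psi}{\partial s}(t,s).
\end{equation*}
Setting $s=0$ and using $\Lambda(0)=1$ together with $\Psi(t,0)=0$ collapses this to the linear ODE
\begin{equation*}
\Lambda'(t)=\Lambda(t)\Bigl[\Lambda'(0)+\frac{\partial\Psi}{\partial s}(t,0)\Bigr],\qquad \Lambda(0)=1.
\end{equation*}

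Second, I would invoke uniqueness and the explicit formula for scalar linear first-order ODEs. The coefficient $g(t):=\Lambda'(0)+\partial_s\Psi(t,0)$ is continuous on $\mathbf{R}_+$ by the differentiability hypothesis on $\Psi$, so the initial-value problem $\Lambda'=g\Lambda$, $\Lambda(0)=1$ has the unique (everywhere nonvanishing) solution
\begin{equation*}
\Lambda(t)=\exp\Bigl(\int_0^t g(\tau)\,d\tau\Bigr)=\exp\Bigl(t\Lambda'(0)+\int_0^t \frac{\partial\Psi}{\partial s}(\tau,0)\,d\tau\Bigr),
\end{equation*}
which is precisely the claimed formula.

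There really isn't a substantive obstacle here; the only thing to be mindful of is justifying the differentiation in $s$ at $s=0$ (which is immediate from the hypotheses on $\Lambda$ and $\Psi$) and noting that one need not assume $\Lambda$ is nonvanishing in advance—this comes out automatically from the uniqueness of solutions to the linear ODE. Compared with the approach in \cite{HK4} that passes through $\log$, this keeps the argument entirely within differentiable functions and avoids any branch-of-logarithm issues for complex-valued $\Lambda$.
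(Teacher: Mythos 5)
Your proposal is correct and follows essentially the same route as the paper: both derive the linear ODE $\Lambda'(t)=\Lambda(t)\bigl[\Lambda'(0)+\tfrac{\partial\Psi}{\partial s}(t,0)\bigr]$ from the functional equation (the paper via a difference quotient, you via differentiating in $s$ and setting $s=0$, which is the same computation) and then solve it by the integrating-factor/uniqueness argument, which the paper writes out explicitly as showing that $\Gamma(t)=\Lambda(t)\exp\bigl(-t\Lambda'(0)-\int_0^t \tfrac{\partial\Psi}{\partial s}(\tau,0)\,d\tau\bigr)$ is constant. No substantive difference.
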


\begin{proof}
	We can rewrite
	$$
	\dfrac{\Lambda(t+s)-\Lambda(t)}{s}=\Lambda(t)\left(\dfrac{\Lambda(s)-1}{s}\right)e^{\Psi(t,s)}+\Lambda(t)\left(\dfrac{e^{\Psi(t,s)}-e^{\Psi(t,0)}}{s}\right).
	$$
	Letting $s\to 0$ gives
	\begin{eqnarray}\label{E'-correction}
	\Lambda '(t) 
	      &=& \Lambda(t)\left(\Lambda '(0)+\dfrac{\partial\Psi}{\partial s}(t,0)\right)\quad\text{(since $\Psi(t,0)=0$).}
	\end{eqnarray}
	For setting
	$$
    \Gamma(t)=\Lambda(t)\exp\left(-t\Lambda '(0)-\int_0^t \frac{\partial\Psi}{\partial s}(\tau,0)d\tau\right),
	$$
	we have $\Gamma(0)=\Lambda(0)=1$, and furthermore
	\begin{eqnarray*}
    \Lambda '(t)
         &=& \Gamma '(t)\exp\left(t\Lambda '(0)+\int_0^t \frac{\partial \Psi}{\partial s}(\tau,0)d\tau\right)+\Lambda(t)\left(\Lambda '(0)+\dfrac{\partial \Psi}{\partial s}(t,0)\right).
	\end{eqnarray*}
	Hence, equation \eqref{E'-correction} is exactly
	$$
	\Gamma '(t)\exp\left(t\Lambda '(0)+\int_0^t \frac{\partial F}{\partial s}(\tau,0)d\tau\right)=0\Longleftrightarrow \Gamma(t)=\Gamma(0)=1.
	$$
	We substitute $\Gamma(\cdot)$ back into $\Psi(\cdot)$ to get the desired form.
\end{proof}

The following proposition provides a necessary condition for the family $(W_{\xi,\zeta}(t))_{t\geq 0}$ when each operator $W_{\xi,\zeta}(t)$ is $\calC_{a,b,c}$-selfadjoint in the sense of unbounded operators. Its proof makes use of Proposition \ref{abc-selfadjointness}.
\begin{prop}\label{---->}
If the operators $W_{\xi,\zeta}(t)$, $t\geq 0$ are $\calC_{a,b,c}$-selfadjoint on $\calF^2$, then 
\begin{enumerate}
\item for every $t\geq 0$, $W_{\xi,\zeta}(t)=W_{\xi,\zeta}(t)_{\max}$;
\item the symbols are of the following forms
\begin{equation}\label{T(t)-ABCD}
\zeta_t(z)=A(t)z+B(t),\quad \xi_t(z)=C(t)e^{zD(t)},
\end{equation}
where the functions $A,B,C,D:\mathbf{R}_+\to\mathbf{C}$ satisfy
\begin{equation}\label{form1}
\text{either\,\,}
\begin{cases}
A(t)=1,\quad B(t)=Et,\quad D(t)=aEt,\\
C(t)=\exp\left(Ft+aE^2t^2/2\right),
\end{cases}
\end{equation}
\begin{equation}\label{form2}
\text{or\,\,}
\begin{cases}
A(t)=e^{\ell t},\quad B(t)=G(1-e^{\ell t}),\quad D(t)=(aG+b)(1-e^{\ell t}),\\
C(t)=\exp\left[ Ht+G(aG+b)(e^{\ell t}-\ell t-1)\right].
\end{cases}
\end{equation}
Here, $\ell,E,F,G,H$ are complex constants with $\ell, E\ne 0$.
\end{enumerate}
\end{prop}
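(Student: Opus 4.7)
The proof proceeds in three stages. The key leverage is that the hypothesis is pointwise in $t$: each individual operator $W_{\xi,\zeta}(t)$ is $\calC_{a,b,c}$-selfadjoint, so Proposition \ref{abc-selfadjointness} applies at each fixed $t$. Applying it yields immediately conclusion (1), namely $W_{\xi,\zeta}(t)=W_{\xi,\zeta}(t)_{\max}$, together with the affine/exponential shape $\zeta_t(z)=A(t)z+B(t)$, $\xi_t(z)=C(t)e^{zD(t)}$ with $C(t)\ne 0$, and the pointwise constraint
$$D(t)=aB(t)-bA(t)+b, \qquad t\ge 0.$$

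Next, the semiflow structure of $(\zeta_t)_{t\ge 0}$ cuts the possibilities down to two. Proposition \ref{class} (after setting aside the trivial case $\zeta_t(z)=z$, which would force $\xi_t$ to be independent of $z$ and is excluded by $E,\ell\ne 0$ in the statement) gives either $A(t)=1$, $B(t)=Et$ with $E\ne 0$, or $A(t)=e^{\ell t}$, $B(t)=G(1-e^{\ell t})$ with $\ell\ne 0$. Substituting these into the constraint from the first stage produces $D(t)=aEt$ in the first case and $D(t)=(aG+b)(1-e^{\ell t})$ in the second, matching \eqref{form1} and \eqref{form2}.

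Finally, $C(t)$ is pinned down by the semicocycle identity. Evaluating $\xi_{t+s}(z)=\xi_t(z)\cdot\xi_s(\zeta_t(z))$ at $z=0$ gives the functional equation
$$C(t+s)=C(t)C(s)e^{B(t)D(s)}, \qquad C(0)=1,$$
where $\Psi(t,s):=B(t)D(s)$ is smooth and satisfies $\Psi(t,0)=0$; differentiability of $C$ (and of $D$) is inherited from the semicocycle axiom on $t\mapsto\xi_t(z)$. Lemma \ref{firstlem} then delivers
$$C(t)=\exp\!\left(tC'(0)+\int_0^t \frac{\partial\Psi}{\partial s}(\tau,0)\,d\tau\right),$$
and evaluating this integral with the explicit $A,B,D$ from the previous stage (writing $F:=C'(0)$ in the first case, $H:=C'(0)$ in the second) reproduces the stated formulas for $C(t)$. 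The bulk of the analytic work is front-loaded into Proposition \ref{abc-selfadjointness}; the main thing to check here is the differentiability needed to invoke Lemma \ref{firstlem}, which follows cleanly from the semicocycle axiom, so no genuine obstacle should arise.
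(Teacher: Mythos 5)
Your proposal is correct and follows essentially the same route as the paper: apply Proposition \ref{abc-selfadjointness} pointwise in $t$ to get maximality and $D(t)=aB(t)-bA(t)+b$, use Proposition \ref{class} to split the semiflow into the two cases, and then solve $C(t+s)=C(t)C(s)e^{B(t)D(s)}$ via Lemma \ref{firstlem}. Your added remarks on excluding the trivial semiflow and on the differentiability needed to invoke Lemma \ref{firstlem} are sound refinements of the same argument.
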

\begin{proof}
Since the operators $W_{\xi,\zeta}(t)$, $t\geq 0$ are $\calC_{a,b,c}$-selfadjoint on $\calF^2$, by Proposition \ref{abc-selfadjointness}, $D(t)=aB(t)-bA(t)+b$. It follows from $\xi_{t+s}=\xi_t \cdot (\xi_s \circ \zeta_t)$, that
$$
C(t+s)=C(t)C(s)e^{B(t)D(s)},\quad\forall t,s\geq 0.
$$
In view of Proposition \ref{class}, there are two cases of $(\zeta_t)_{t\geq 0}$.

{\bf Case 1:} If $\xi_t(z)=z+Et$, then $A(t)=1$, $B(t)=Et$, and hence
$$D(t)=aB(t)-bA(t)+b=aEt.$$
Thus,
$$
C(t+s)=C(t)C(s)e^{aE^2 ts},\quad\forall t,s\geq 0,
$$
and hence by Lemma \ref{firstlem}, we get the explicit form of the function $C(\cdot)$.

{\bf Case 2:} If $\xi_t(z)=e^{t\ell}z+G(1-e^{t\ell})$, then $A(t)=e^{t\ell}$, $B(t)=G(1-e^{t\ell})$, and so $D(t)=(aG+b)(1-e^{\ell t})$.
Thus,
$$
C(t+s)=C(t)C(s)e^{G(1-e^{t\ell})(aG+b)(1-e^{\ell s})},\quad\forall t,s\geq 0,
$$
and hence by Lemma \ref{firstlem}, we get the explicit form of the function $C(\cdot)$.
\end{proof}

It turns out that the $\calC_{a,b,c}$-symmetry is also a sufficient condition for the family $(W_{\xi,\zeta}(t))_{t\geq 0}$ to be an unbounded $C_0$-semigroup. To prove this, we show that the monomials $\bfe_k$, where $k\in\mathbf{N}$, belong to the set $\mfd(W_{\xi,\zeta})$. Namely, we take turns checking the axioms (A1)-(A3). It is clear that $\bfe_k$ always satisfies the axiom (A1). The verification of the remaining axioms requires a closer look.

- For the axiom (A2), we have the following.
\begin{prop}\label{<--1}
Let $(\xi_t)_{t\geq 0}$, $(\zeta_t)_{t\geq 0}$ be families of entire functions given by either \eqref{form1} or \eqref{form2}. Then for every $k\geq 1$ and every $t,s\geq 0$, we have $\bfe_k\in\text{dom}(W_{\xi,\zeta}(s)_{\max}W_{\xi,\zeta}(t)_{\max})$, and moreover,
$$
W_{\xi,\zeta}(s)_{\max}W_{\xi,\zeta}(t)_{\max}\bfe_k=W_{\xi,\zeta}(s+t)_{\max}\bfe_k.
$$
\end{prop}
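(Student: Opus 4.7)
The plan is a direct computation that leverages one basic structural fact about the Fock space: every function of the form $P(z)e^{\mu z}$, with $P\in\mathbf{C}[z]$ and $\mu\in\mathbf{C}$, lies in $\calF^2$. This is immediate from $e^{\mu z}=K_{\overline{\mu}}(z)\in\calF^2$ together with the elementary coefficient estimate $\|z^je^{\mu z}\|^2<\infty$ for each $j\in\mathbf{N}$. Consequently, every function that arises in the computation will automatically belong to $\calF^2$, so membership in the maximal domains reduces to a purely algebraic verification.

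Substituting the explicit formulas from \eqref{form1}--\eqref{form2}, I obtain
\begin{equation*}
W_{\xi,\zeta}(t)_{\max}\bfe_k(z)=C(t)\,e^{D(t)z}\bigl(A(t)z+B(t)\bigr)^k,
\end{equation*}
a polynomial of degree $k$ in $z$ multiplied by $e^{D(t)z}$, hence an element of $\calF^2$; so $\bfe_k\in\text{dom}(W_{\xi,\zeta}(t)_{\max})$. Applying the formal expression $E(\xi_s,\zeta_s)$ to the above function produces
\begin{equation*}
C(s)C(t)\,e^{B(s)D(t)}\,e^{(D(s)+A(s)D(t))z}\bigl(A(s)A(t)z+A(t)B(s)+B(t)\bigr)^k,
\end{equation*}
again a polynomial times an exponential, hence in $\calF^2$. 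Therefore $W_{\xi,\zeta}(t)_{\max}\bfe_k\in\text{dom}(W_{\xi,\zeta}(s)_{\max})$, giving $\bfe_k\in\text{dom}(W_{\xi,\zeta}(s)_{\max}W_{\xi,\zeta}(t)_{\max})$.

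For the semigroup identity, comparing the two displays above with $W_{\xi,\zeta}(s+t)_{\max}\bfe_k(z)=C(s+t)e^{D(s+t)z}\bigl(A(s+t)z+B(s+t)\bigr)^k$ and matching the coefficients of like powers of $z$ inside and outside the exponential reduces the claim to the four functional equations
\begin{equation*}
A(s+t)=A(s)A(t),\qquad B(s+t)=A(t)B(s)+B(t),
\end{equation*}
\begin{equation*}
D(s+t)=D(s)+A(s)D(t),\qquad C(s+t)=C(s)C(t)\,e^{B(s)D(t)}.
\end{equation*}
In both cases \eqref{form1} and \eqref{form2} the first three identities are immediate substitutions. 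The fourth identity in case \eqref{form1} comes from the cross term in $(s+t)^2$ being absorbed into $e^{B(s)D(t)}=e^{aE^2st}$; in case \eqref{form2}, it rests on the collapse
\begin{equation*}
(1-e^{\ell s})(1-e^{\ell t})+e^{\ell s}+e^{\ell t}-2=e^{\ell(s+t)}-1,
\end{equation*}
after which both sides simplify to $\exp\bigl[H(s+t)+G(aG+b)(e^{\ell(s+t)}-\ell(s+t)-1)\bigr]$. The only real obstacle is this case-\eqref{form2} bookkeeping; the key observation is that the algebra involved is \emph{purely formal} and never invokes the boundedness constraints (B1)--(B3), so the identities that power Proposition \ref{semi-wco-fock} in the bounded setting transfer verbatim to the unbounded framework considered here.
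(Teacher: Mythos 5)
Your proof is correct and follows essentially the same route as the paper's: a direct formal computation of $E(\xi_s,\zeta_s)E(\xi_t,\zeta_t)\bfe_k$ reduced to the semiflow/semicocycle functional equations for $A,B,C,D$, which hold verbatim in both cases \eqref{form1} and \eqref{form2}. Your added observation that every intermediate function is a polynomial times $e^{\mu z}$ and hence lies in $\calF^2$ (settling the maximal-domain membership) is left implicit in the paper, but it is a correct and worthwhile clarification.
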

\begin{proof}
For every $k\geq 1$, we have $E(\xi_t,\zeta_t)\bfe_k(z)=C(t)e^{zD(t)}[A(t)z+B(t)]^k$, which gives
\begin{eqnarray*}
&& E(\xi_s,\zeta_s)E(\xi_t,\zeta_t)\bfe_k(z)\\
&&=\xi_s(z)E(\xi_t,\zeta_t)\bfe_k(\zeta_s(z))\\
&&= C(s)e^{zD(s)}C(t)e^{[A(s)z+B(s)]D(t)}[A(t)(A(s)z+B(s))+B(t)]^k\\
&&= C(s)C(t)e^{B(s)D(t)}e^{z[D(s)+A(s)D(t)]}[A(t)A(s)z+A(t)B(s)+B(t)]^k\\
&&= E(\xi_{s+t},\zeta_{s+t})\bfe_k(z).
\end{eqnarray*}
\end{proof}

- The verification of axiom (A3) is more involved than checking (A2).
\begin{prop}\label{<--2}
Let $(\zeta_t)_{t\geq 0}$ be a semiflow on $\mathbf{C}$, such that its corresponding semicocycle has the form $\xi_t(z) = C(t)e^{D(t)z}$, with  conditions
\begin{equation}\label{atbt}
\lim_{t\to s}C(t)=C(s), \quad\lim_{t\to s}D(t)=D(s),\quad\forall s>0,
\end{equation}
and
\begin{equation}\label{atbt-2}
\lim_{t\to 0^+}C(t)=C(0), \quad\lim_{t\to 0^+}D(t)=D(0).
\end{equation}
Then for every $m\in\mathbf{N}$, the monomial $\bfe_m$ satisfies axiom (A3).
\end{prop}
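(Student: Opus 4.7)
Both parts of axiom (A3) for $\bfe_m$—norm-continuity of $t\mapsto W_{\xi,\zeta}(t)\bfe_m$ on $(0,\infty)$ and right-continuity at $t=0$—will reduce to a single Fock-space statement: that the map $c\mapsto z^{k}e^{cz}$ is continuous from $\mathbf{C}$ into $\calF^{2}$ for each $k\in\mathbf{N}$.

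First I would use Proposition~\ref{class} to write every (non)trivial semiflow in the affine form $\zeta_t(z)=A(t)z+B(t)$ with $A(\cdot),B(\cdot)$ smooth scalar functions of $t$ satisfying $A(0)=1$ and $B(0)=0$. Inserting this and the hypothesis $\xi_t(z)=C(t)e^{D(t)z}$, the binomial theorem gives
$$W_{\xi,\zeta}(t)\bfe_m \;=\; \sum_{k=0}^{m}\binom{m}{k}\, C(t)A(t)^{k}B(t)^{m-k}\, g_{k,D(t)},\qquad g_{k,c}(z):=z^{k}e^{cz}.$$
This is a finite sum of scalar multiples of functions $g_{k,c}\in\calF^{2}$, and the scalar coefficients are continuous in $t$ on $[0,\infty)$ by hypotheses \eqref{atbt}--\eqref{atbt-2} together with the smoothness of $A$ and $B$. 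Hence the whole question reduces to continuity of $c\mapsto g_{k,c}$ into $\calF^{2}$, composed with the continuous scalar map $t\mapsto D(t)$.

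For that continuity I would expand $g_{k,c}=\sum_{n\ge 0}(c^{n}/n!)\,\bfe_{n+k}$ in the orthogonal basis $\{\bfe_{j}\}$; using $\|\bfe_{j}\|^{2}=j!$ one gets
$$\|g_{k,c}-g_{k,c_{0}}\|^{2}\;=\;\sum_{n=0}^{\infty}\frac{(n+k)!}{(n!)^{2}}\,|c^{n}-c_{0}^{n}|^{2}.$$
Each term tends to $0$ as $c\to c_{0}$; on any disk $|c|\le R$ the $n$th term is dominated by $4R^{2n}(n+k)!/(n!)^{2}$, and this bound is summable since $(n+k)!/n!$ is a polynomial in $n$ of degree $k$. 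A dominated-convergence argument (for series) then yields $\|g_{k,c}-g_{k,c_{0}}\|\to 0$, which combined with the previous step gives continuity of $t\mapsto W_{\xi,\zeta}(t)\bfe_m$ on $(0,\infty)$ via \eqref{atbt}.

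Finally, for right-continuity at $t=0$, the semicocycle axiom $\xi_{0}=1$ forces $C(0)=1$ and $D(0)=0$, while $A(0)=1,B(0)=0$ as noted; substituting into the binomial expansion collapses the sum to the single term $z^{m}=\bfe_m$, so $W_{\xi,\zeta}(0)\bfe_m=\bfe_m$, and the very same continuity estimate, now using hypothesis \eqref{atbt-2}, gives $\|W_{\xi,\zeta}(t)\bfe_m-\bfe_m\|\to 0$ as $t\to 0^+$. The main technical step is the dominated-convergence estimate for $\|g_{k,c}-g_{k,c_0}\|$; everything else is bookkeeping with the binomial expansion and the continuity of the scalar parameters.
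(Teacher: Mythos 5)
Your argument is correct, but it takes a genuinely different route from the paper's. The paper fixes $s\ge 0$ and splits the difference $W_{\xi,\zeta}(t)\bfe_m-W_{\xi,\zeta}(s)\bfe_m$ pointwise into three terms, $[C(t)-C(s)]e^{zD(s)}\zeta_s(z)^m$, a term $J_{t,s}$ controlling the change in the exponential weight, and a term $K_{t,s}$ controlling the change in $\zeta_t(z)^m$ (handled separately in the two cases of Proposition~\ref{class}); each is then bounded by an explicit function of $z$ and integrated against the Gaussian measure to show the $\calF^2$-norm tends to $0$. You instead expand $(A(t)z+B(t))^m$ by the binomial theorem, writing $W_{\xi,\zeta}(t)\bfe_m$ as a finite linear combination of the fixed functions $g_{k,c}(z)=z^ke^{cz}$ with continuous scalar coefficients, and reduce everything to one lemma: continuity of $c\mapsto g_{k,c}$ into $\calF^2$, which you prove by computing $\|g_{k,c}-g_{k,c_0}\|^2=\sum_{n\ge 0}(n+k)!\,|c^n-c_0^n|^2/(n!)^2$ in the orthogonal basis and applying dominated convergence for series (the bound $4R^{2n}(n+k)!/(n!)^2$ is indeed summable). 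Both proofs are complete; yours is more modular and isolates a reusable continuity statement in the coefficient picture, while the paper's works directly with Gaussian integral estimates in the $z$-plane and keeps the two semiflow cases explicit. Your treatment of $t\to 0^+$ (using $\xi_0=1$ to force $C(0)=1$, $D(0)=0$ and collapsing the binomial sum to $\bfe_m$) matches the endpoint case the paper omits as ``rather simple.''
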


\begin{proof}
We omit the case when $s=0$ and prove the case when $s>0$, as the first case is rather simple. Fix $s>0$. For each $\bfe_m$, we can write
\begin{eqnarray*}
&&W_{\xi,\zeta}(t)_{\max}\bfe_m(z)-W_{\xi,\zeta}(s)_{\max}\bfe_m(z)\\
&&= [C(t)-C(s)]e^{zD(s)}\zeta_s(z)^m + C(t)[e^{zD(t)}-e^{zD(s)}]\zeta_s(z)^m\\
&&\quad + \,\xi_t(z)[\zeta_t(z)^m-\zeta_s(z)^m].
\end{eqnarray*}

- Note that the first term always tends to $0$ in $\calF^2$ as $t\to s$.

We consider the functions
$$
J_{t,s}(z)=C(t)[e^{zD(t)}-e^{zD(s)}]\zeta_s(z)^m,\quad K_{t,s}(z)=\xi_t(z)[\zeta_t(z)^m-\zeta_s(z)^m].
$$
Let $M(s)=\max\{|A(s)|,|B(s)|,|C(s)|+1, |D(s)|+1\}$. By \eqref{atbt}, we can find $\delta=\delta(s)\in\min\{1,s\}$ such that
\begin{equation}\label{bt2at1}
\sup\limits_{s-\delta \leq t \leq s+\delta}|C(t)|\leq M(s),\quad \sup\limits_{s-\delta \leq t \leq s+\delta}|D(t)|\leq M(s).
\end{equation}
For the rest of the proof, we let $t\in [s-\delta,s+\delta]$.

- Clearly, $J_{t,s}(\cdot)$ is an entire function. Note that
$$
|\zeta_s(z)|\leq M(s)(1+|z|),
$$
and for every $k\geq 1$, we have
\begin{eqnarray*}
|D(t)^k-D(s)^k|
&=&|D(t)-D(s)|\cdot \left|\sum_{j=0}^{k-1} D(t)^j D(s)^{k-1-j}\right|\\
&\leq& |D(t)-D(s)|k M(s)^{k-1}.
\end{eqnarray*}
Since
\begin{eqnarray*}
J_{t,s}(z) &=& C(t)\sum_{k\ge1}\frac{[D(t)^k-D(s)^k]z^k\zeta_s(z)^m}{k!},
\end{eqnarray*}
we can estimate
\begin{eqnarray*}
|J_{t,s}(z) | &\leq& M(s)\sum_{k\geq 1}\dfrac{|D(t)-D(s)|k M(s)^{k-1} |z|^kM(s)^m(1+|z|)^m}{k!}\\
&=& |D(t)-D(s)|M(s)^{m+1}|z|(1+|z|)^m e^{M(s)|z|}\\
&\leq & |D(t)-D(s)|M(s)^{m+1}(1+|z|)^{m+1} e^{M(s)|z|},
\end{eqnarray*}
and so,
\begin{eqnarray*}
\|J_{t,s}\|^2 &\leq&|D(t)-D(s)|^2 M(s)^{2m+2}\int_{\mathbf{C}}(1+|z|)^{2m+2} e^{2M(s)|z|-|z|^2}\,dV(z).
\end{eqnarray*}
Since the right-hand-side integral is finite, the function $J_{t,s}(\cdot)\in\calF^2$, and moreover it converges to $0$ as $t\to s$.

- Finally, we prove that $\lim\limits_{t\to s}K_{t,s} = 0$ in $\calF^2$-norm.

This limit is trivial if $m=0$. So, we only consider the case when $m\geq 1$. For this case, we note that
\begin{eqnarray*}
\|K_{t,s}\|^2 
&=& \frac{1}{\pi}\int_{\mathbf{C}}|C(t)|^2\left|\zeta_t(z)^m-\zeta_s(z)^m\right|^2 e^{2\re[zD(t)]-|z|^2}\;dV(z)\\
&\leq& \frac{M(s)^2}{\pi}\int_{\mathbf{C}} \left| \zeta_t(z)^m-\zeta_s(z)^m \right|^2 e^{2|zD(t)|-|z|^2}dV(z)\\
&\leq& \frac{M(s)^2}{\pi}\int_{\mathbf{C}} \left| \zeta_t(z)^m-\zeta_s(z)^m \right|^2 e^{2M(s)|z|-|z|^2}dV(z).
\end{eqnarray*}

In view of Proposition \ref{class}, there are two cases of the semiflow $(\zeta_t)$.

{\bf Case 1:} If $\zeta_t(z)=z+Et$ for some $E\in\mathbf{C}\setminus \{0\}$, then
$$
\zeta_t(z)^m-\zeta_s(z)^m=(z+Et)^m-(z+Es)^m=E(t-s)\sum_{j=0}^{m-1}(z+Et)^j(z+Es)^{m-1-j},
$$
and hence
$$
|\zeta_t(z)^m-\zeta_s(z)^m|\leq |E(t-s)|m[|z|+|E|(s+\delta)]^{m-1}.
$$
Thus, we can estimate
\begin{eqnarray*}
\|K_{t,s}\|^2 
&\leq& \frac{M(s)^2}{\pi}\int_{\mathbf{C}} \left| \zeta_t(z)^m-\zeta_s(z)^m \right|^2 e^{2M(s)|z|-|z|^2}dV(z)\\
&\leq& \frac{M(s)^2m^2|E(t-s)|^2}{\pi}\int_{\mathbf{C}}  [|z|+|E|(s+\delta)]^{2m-2}e^{2M(s)|z|-|z|^2}dV(z).
\end{eqnarray*}
The last inequality shows that $K_{t,s}(\cdot)\in\calF^2$, and moreover it converges to $0$ in $\calF^2$ as $t\to s$.

{\bf Case 2:} If $\zeta_t(z)=e^{t\ell}(z-G)+G$ for some $G\in\mathbf{C}$ and some $\ell \in\mathbf{C}\setminus\{0\}$, then
\begin{eqnarray*}
|\zeta_t(z)|&\leq& e^{t|\ell|}(|z|+|G|)+|G|\leq e^{(s+\delta)|\ell|}(|z|+|G|)+|G|\\
&\leq& L(s)[|z|+L(s)]\leq [|z|+L(s)]^2,
\end{eqnarray*}
where $L(s)=\max\{e^{(s+\delta)|\ell|},|G|+1\}$. Since
$$
\zeta_t(z)^m-\zeta_s(z)^m = (e^{t\ell}-e^{s\ell})(z-G)\sum_{j=0}^{m-1}\zeta_t(z)^j\zeta_s(z)^{m-1-j},
$$
we can estimate
\begin{eqnarray*}
|\zeta_t(z)^m-\zeta_s(z)^m|&\leq& |e^{t\ell}-e^{s\ell}|\cdot|z-G| m [|z|+L(s)]^{2m-2}\\
&\leq& |e^{t\ell}-e^{s\ell}|m [|z|+L(s)]^{2m-1},
\end{eqnarray*}
and hence
\begin{eqnarray*}
\|K_{t,s}\|^2 
&\leq& \frac{M(s)^2}{\pi}\int_{\mathbf{C}} \left| \zeta_t(z)^m-\zeta_s(z)^m \right|^2 e^{2M(s)|z|-|z|^2}dV(z)\\
&\leq& \frac{m^2M(s)^2|e^{t\ell}-e^{s\ell}|^2}{\pi}\int_{\mathbf{C}} [|z|+L(s)]^{4m-2} e^{2M(s)|z|-|z|^2}dV(z).
\end{eqnarray*}
The last inequality shows that $K_{t,s}\in\calF^2$, and moreover it converges to $0$ in $\calF^2$ as $t\to s$. The proof is complete.
\end{proof}

\subsection{Characterization}
With all preparation in place, we now state and prove the main result of this section.
\begin{thm}\label{main-res-unbdd}
Let $(W_{\xi,\zeta}(t))_{t\geq 0}$ be a weighted composition semigroup induced by the semiflow $(\zeta_t)_{t\geq 0}$ and the corresponding semicocycle $(\xi_t)_{t\geq 0}$. Then $(W_{\xi,\zeta}(t))_{t\geq 0}$ is a $\calC_{a,b,c}$-selfadjoint, unbounded semigroup if and only if 
\begin{enumerate}
\item for every $t\geq 0$, $W_{\xi,\zeta}(t)=W_{\xi,\zeta}(t)_{\max}$.
\item $(\zeta_t)_{t\geq 0}$, $(\xi_t)_{t\geq 0}$ are of the forms \eqref{T(t)-ABCD}, where the functions $A,B,C,D$ satisfy either forms \eqref{form1} or forms \eqref{form2}.
\end{enumerate}
\end{thm}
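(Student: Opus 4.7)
The plan is to prove both implications using the groundwork already laid out. The forward direction ($\Longrightarrow$) is essentially Proposition \ref{---->}: if $(W_{\xi,\zeta}(t))_{t\geq 0}$ is a $\calC_{a,b,c}$-selfadjoint, unbounded semigroup, then in particular each $W_{\xi,\zeta}(t)$ is a $\calC_{a,b,c}$-selfadjoint weighted composition operator, so Proposition \ref{abc-selfadjointness} yields $W_{\xi,\zeta}(t)=W_{\xi,\zeta}(t)_{\max}$, and Proposition \ref{---->} extracts the explicit forms \eqref{T(t)-ABCD}--\eqref{form2} for the symbols. So the real content lies in the reverse implication.

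For ($\Longleftarrow$), I would split the verification into two blocks. \textbf{First}, I would confirm that each operator $W_{\xi,\zeta}(t)$ is $\calC_{a,b,c}$-selfadjoint on its maximal domain. By Proposition \ref{abc-selfadjointness}, it suffices to check the algebraic identity $D(t)=aB(t)-bA(t)+b$. In form \eqref{form1}, $aB(t)-bA(t)+b=aEt-b+b=aEt=D(t)$. In form \eqref{form2},
\begin{equation*}
aB(t)-bA(t)+b = aG(1-e^{\ell t})-be^{\ell t}+b = (aG+b)(1-e^{\ell t})=D(t).
\end{equation*}
Since by hypothesis $W_{\xi,\zeta}(t)=W_{\xi,\zeta}(t)_{\max}$, Proposition \ref{abc-selfadjointness} yields $\calC_{a,b,c}$-selfadjointness of each $W_{\xi,\zeta}(t)$.

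\textbf{Second}, I would verify that $(W_{\xi,\zeta}(t))_{t\geq 0}$ is an unbounded $C_0$-semigroup in the sense of Hughes, namely that $\mfd(W_{\xi,\zeta})\neq\{0\}$. The natural candidates are the monomials $\bfe_k$, $k\in\mathbf{N}$. Proposition \ref{<--1} shows that $\bfe_k\in\text{dom}(W_{\xi,\zeta}(s)_{\max}W_{\xi,\zeta}(t)_{\max})$ for all $t,s\geq 0$, so $\bfe_k\in\calD(W_{\xi,\zeta})$, which handles axiom (A1); the same proposition delivers the semigroup identity $W_{\xi,\zeta}(t)W_{\xi,\zeta}(s)\bfe_k=W_{\xi,\zeta}(t+s)\bfe_k$, giving axiom (A2). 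For axiom (A3) I would invoke Proposition \ref{<--2}: in both forms \eqref{form1} and \eqref{form2}, the scalar coefficients $C(t)$ and $D(t)$ are manifestly continuous on $\mathbf{R}_+$, with $C(0)=1$ and $D(0)=0$, so the continuity hypotheses \eqref{atbt}--\eqref{atbt-2} are verified and Proposition \ref{<--2} supplies the strong continuity of $t\mapsto W_{\xi,\zeta}(t)\bfe_k$ on $[0,\infty)$. Combining these three axioms shows $\bfe_k\in\mfd(W_{\xi,\zeta})$, completing the proof that $(W_{\xi,\zeta}(t))_{t\geq 0}$ is an unbounded $C_0$-semigroup.

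No step is particularly hard because all genuine analytic work (the $\calF^2$-norm estimates needed to bound $\|J_{t,s}\|$ and $\|K_{t,s}\|$ as $t\to s$) has been dispatched in Propositions \ref{<--1}--\ref{<--2}. The only mild subtlety I would flag is the bookkeeping: one must make sure that the hypothesis $W_{\xi,\zeta}(t)=W_{\xi,\zeta}(t)_{\max}$ is explicitly invoked when moving from ``$\bfe_k\in\text{dom}(W_{\xi,\zeta}(s)_{\max}W_{\xi,\zeta}(t)_{\max})$'' to ``$\bfe_k\in\calD(W_{\xi,\zeta})$'', and that Proposition \ref{abc-selfadjointness} requires $\psi\not\equiv 0$, which is automatic here because $C(t)$ is a nonvanishing exponential in both forms.
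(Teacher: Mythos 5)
Your proposal is correct and follows essentially the same route as the paper, which disposes of necessity via Proposition \ref{---->} and of sufficiency via Propositions \ref{<--1}--\ref{<--2}. You in fact supply a detail the paper leaves implicit, namely the explicit check that $D(t)=aB(t)-bA(t)+b$ in both forms \eqref{form1} and \eqref{form2} so that Proposition \ref{abc-selfadjointness} applies, which is a welcome addition rather than a deviation.
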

\begin{proof}
The necessity holds by Proposition \ref{---->}, while the sufficiency follows from Propositions \ref{<--1}-\ref{<--2}.
\end{proof}

Comparing Proposition \ref{semi-wco-fock} to Theorem \ref{main-res-unbdd}, there is an essential difference between the bounded and unbounded semigroup cases. The first case depends heavily on conditions (B1-B3).

\subsection{Generators}
In this subsection, we compute the generators of complex symmetric semigroups characterized in Theorem \ref{main-res-unbdd}. We recall some notations
$$
N_\omega(f)=\sup\limits_{t\geq 0}e^{-\omega t}\|W_{\xi,\zeta}(t)f\|,\quad \Sigma_\omega=\{f\in\mfd(W_{\xi,\zeta}):N_\omega(f)<\infty\}.
$$
In view of Theorem \ref{main-res-unbdd}, we consider two cases for the semiflow $(\zeta_t)_{t\geq 0}$.
\begin{prop}\label{generator-1}
Let $(W_{\xi,\zeta}(t))_{t\geq 0}$ be a $\calC_{a,b,c}$-selfadjoint, unbounded semigroup, which is of forms \eqref{T(t)-ABCD}-\eqref{form1}. Then 
\begin{enumerate}
\item the generator $Q$ of the semigroup $(W_{\xi,\zeta}(t))_{t\geq 0}$ is exactly
$$
\text{dom}(Q)=\underset{\omega\in\mathbf{R}}{\bigcup}\{f\in\Sigma_\omega:(F+aEz)f(z)+Ef'(z)\in\Sigma_\omega\},
$$
$$
Q f(z)=(F+aEz)f(z)+Ef'(z),\quad f\in\text{dom}(Q).
$$
\item The operator $Q$ is $\calC_{a,b,c}$-symmetric. 
\item If $\re(aE^2+|E|^2)>0$, then 
\begin{enumerate}
\item the operator $Q$ is not $\calC_{a,b,c}$-selfadjoint.
\item the operator $Q^*$ is not the generator of the adjoint semigroup $(W_{\xi,\zeta}(t)^*)_{t\geq 0}$.
\end{enumerate}
\item The point spectrum $\sigma_p(Q)=\emptyset$.
\end{enumerate}
\end{prop}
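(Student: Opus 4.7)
The four claims call for four rather different arguments.

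For (1) and (2): I would differentiate the explicit formula $W_{\xi,\zeta}(t)f(z) = \exp\bigl(Ft + aE^2 t^2/2\bigr)\, e^{aEtz} f(z+Et)$ pointwise at $t=0$, obtaining the formal action $(F+aEz)f(z) + Ef'(z)$. Matching against the definition of the generator from Section~\ref{pre-unbdd-se}, I would show that, for each $\omega \in \mathbf{R}$, $f \in \text{dom}(A^\omega)$ iff both $f$ and $(F+aEz)f + Ef'$ lie in $\Sigma_\omega$: the inclusion ``$\supseteq$'' uses a first-order Taylor estimate on the exponential and linear factors together with Cauchy reproducing-kernel bounds to upgrade pointwise convergence of the difference quotient to $\calF^2$-norm (and thence $N_\omega$-norm) convergence, while ``$\subseteq$'' is forced since any norm limit must coincide with the pointwise one. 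Taking the union over $\omega$ gives the stated form of $\text{dom}(Q)$. Part (2) is then immediate from Theorem~\ref{stone-1}(3), since each $W_{\xi,\zeta}(t) = W_{\xi,\zeta}(t)_{\max}$ is densely defined (its domain contains all polynomials, which are dense in $\calF^2$).

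For (3a), I would produce the explicit witness $\bfe_0 = 1$. A direct computation using $\|e^{aEtz}\|^2 = e^{|E|^2 t^2}$ yields $\|W_{\xi,\zeta}(t)\bfe_0\|^2 = \exp\bigl(2\re(F)\,t + \re(aE^2 + |E|^2)\,t^2\bigr)$, whose superexponential growth under the hypothesis $\re(aE^2 + |E|^2) > 0$ forces $\bfe_0 \notin \Sigma_\omega$ for every $\omega$, hence $\bfe_0 \notin \text{dom}(Q)$. On the other hand $(F+aEz)\bfe_0 + E\bfe_0' = F + aEz \in \calF^2$, and a standard $\calC$-symmetric extension argument, exploiting the $\calC_{a,b,c}$-symmetry of the formal expression on polynomials (from part~(2)), identifies $\calC_{a,b,c} Q^* \calC_{a,b,c}$ with the \emph{maximal} operator in $\calF^2$ having the same formal action; the vector $\bfe_0$ belongs to that maximal domain. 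Consequently $Q \subsetneq \calC_{a,b,c} Q^* \calC_{a,b,c}$, proving (3a). Claim (3b) then follows contrapositively: were $Q^*$ to generate $(W_{\xi,\zeta}(t)^*)_{t\geq 0}$, Theorem~\ref{stone-1}(2) would force $Q = \calC_{a,b,c} Q^* \calC_{a,b,c}$, contradicting (3a).

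For (4), I would solve the eigenvalue ODE $Ef'(z) + (F + aEz - \lambda)f(z) = 0$ explicitly, obtaining $f(z) = C \exp\bigl((\lambda-F)z/E - az^2/2\bigr)$ for a constant $C$. Writing $a = e^{i\theta}$ and $z = re^{i\phi}$, the exponent of $|f(z)|^2 e^{-|z|^2}$ becomes $2r\re\bigl((\lambda-F)e^{i\phi}/E\bigr) - r^2\bigl[\cos(\theta + 2\phi) + 1\bigr]$; the quadratic factor vanishes on the two antipodal rays $\theta + 2\phi \equiv \pi\pmod{2\pi}$, and a Laplace-method analysis in a shrinking angular sector around such a ray (substituting $\psi = r(\phi - \phi_0)$) reduces the radial contribution to a multiple of $\int_0^\infty e^{2cr}\,dr$ for some real constant $c$; since the two rays produce opposite-sign values of $c$, this integral always diverges, regardless of the sign. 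Hence no nonzero $f$ lies in $\calF^2$, so $\sigma_p(Q) = \emptyset$.

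The main obstacle is the identification in (3a) of $\calC_{a,b,c} Q^* \calC_{a,b,c}$ with the maximal operator of the given formal action: one must verify that the adjoint of the domain-restricted differential operator $Q$ really coincides with the formal adjoint on its full maximal domain, and not merely on a proper subdomain. The remaining steps, including the norm estimate for $\bfe_0$ and the Fock-space integrability analysis in (4), are largely mechanical once this framework is in place.
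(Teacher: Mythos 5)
Your conclusions for (2), (3b) and (4) are reached essentially as in the paper — (3b) is the same contrapositive via Theorem \ref{stone-1}(2), and your Laplace-method analysis of $f(z)=C\exp((\lambda-F)z/E-az^2/2)$ is a legitimate self-contained replacement for the paper's citation of \cite{KHI} — and your growth computation $\|W_{\xi,\zeta}(t)\bfe_0\|^2=\exp(2t\re F+t^2\re(aE^2+|E|^2))$ is correct. The two places where you diverge are exactly the two hard steps. In (1), the paper never attempts your ``$\supseteq$'' direction head-on. It proves only the easy inclusion $Q^\omega\preceq X_{1,\omega}$, then closes the gap by a resolvent argument: generation theory on the Banach space $(\calD_\omega,N_\omega)$ supplies $\lambda$ for which the (restricted) generator minus $\lambda$ is surjective, the ODE $(F+aEz-\lambda)f+Ef'=0$ has only the solutions $f(0)e^{(\lambda-F)z/E-az^2/2}\notin\calF^2$ so $X_{1,\omega}-\lambda I$ is injective, and a surjective restriction of an injective operator must equal it. Your proposed mechanism — upgrading pointwise convergence of the difference quotient to $\calF^2$-norm convergence via ``Taylor estimates and reproducing-kernel bounds'' — does not work as stated: without a dominating majorant (which membership of $f$ and $(F+aEz)f+Ef'$ in $\Sigma_\omega$ does not visibly provide) pointwise convergence gives nothing in norm, and you must also establish norm-differentiability of $t\mapsto W_{\xi,\zeta}(t)f$ at every $t>0$. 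If you insist on a direct route, the workable device is the identity $W_{\xi,\zeta}(t)f-f=\int_0^t W_{\xi,\zeta}(s)g\,ds$ with $g=(F+aEz)f+Ef'$, verified against reproducing kernels and using that $g\in\Sigma_\omega\subseteq\mfd(W_{\xi,\zeta})$ makes $s\mapsto W_{\xi,\zeta}(s)g$ norm-continuous; dividing by $t$ then yields the limit.

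In (3a) you have correctly located your own gap, but you have also made the problem harder than necessary. You do not need to identify $\calC_{a,b,c}Q^*\calC_{a,b,c}$ with the maximal operator. The paper argues conditionally: if $Q$ were $\calC_{a,b,c}$-selfadjoint it would have to be \emph{maximal} (cited from \cite{hai2018complex}), hence $\bfe_0\in\text{dom}(Q)\subseteq\bigcup_\omega\Sigma_\omega$, contradicting the superexponential growth of $\|W_{\xi,\zeta}(t)\bfe_0\|$. Alternatively, your version needs only the one-sided inclusion $Q_{\max}\preceq\calC_{a,b,c}Q^*\calC_{a,b,c}$, which follows from $Q\preceq Q_{\max}$ together with the $\calC_{a,b,c}$-selfadjointness of the maximal operator (the content of Proposition \ref{abc-selfadjointness} and \cite{hai2018complex}); this already puts $\bfe_0$ in $\text{dom}(\calC_{a,b,c}Q^*\calC_{a,b,c})\setminus\text{dom}(Q)$. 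Either way the missing ingredient is that external maximality/selfadjointness result for the maximal operator, not a ``standard extension argument,'' and until it is invoked explicitly your (3a) is incomplete.
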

\begin{proof}
(1) Let us define the operator $X_1:\text{dom}(X_1)\subseteq\calF^2\to\calF^2$ by setting
$$
\text{dom}(X_1)=\underset{\omega\in\mathbf{R}}{\bigcup}\{f\in\Sigma_\omega:(F+aEz)f(z)+Ef'(z)\in\Sigma_\omega\},
$$
$$
X_1 f(z)=(F+aEz)f(z)+Ef'(z),\quad f\in\text{dom}(X_1).
$$
If $\omega_1\leq\omega_2$, then $\Sigma_{\omega_1}\subseteq\Sigma_{\omega_2}$, and hence,
\begin{eqnarray*}
&&\{f\in\Sigma_{\omega_1}:(F+aEz)f(z)+Ef'(z)\in\Sigma_{\omega_1}\}\\
&&\subseteq \{f\in\Sigma_{\omega_2}:(F+aEz)f(z)+Ef'(z)\in\Sigma_{\omega_2}\}.
\end{eqnarray*}
Thus, the operator $X_1$ is well-defined. For each $\omega\in\mathbf{R}$, we define the operator
$$
\text{dom}(X_{1,\omega})=\{f\in\Sigma_\omega:(F+aEz)f(z)+Ef'(z)\in\Sigma_\omega\},
$$
$$
X_{1,\omega}f=(F+aEz)f(z)+Ef'(z),\quad f\in\text{dom}(X_{1,\omega}).
$$

By \cite[Theorem 2.15]{RJH}, the generator $Q$ is of the following form
$$\text{dom}(Q)=\underset{\omega\in\mathbf{R}}{\bigcup}\text{dom}(Q^\omega),\quad Qf=Q^\omega f,\, f\in\text{dom}(Q^\omega),$$
where the operators $Q^\omega$, $\omega\in\mathbf{R}$ are defined as in \eqref{Aomega-1-a}, that is
\begin{eqnarray*}
\text{dom}(Q^\omega)&=&\bigg\{f\in\Sigma_\omega:\,\text{$W_{\xi,\zeta}(\cdot)f$ is differentiable at every $t>0$}\\
\nonumber&&\quad\text{and there exists $g\in\Sigma_\omega$ such that $g=\lim\limits_{t\to 0^+}\dfrac{W_{\xi,\zeta}(t)f-f}{t}$}\bigg\},
\end{eqnarray*}
$$
Q^\omega f=\lim\limits_{t\to 0^+}\dfrac{W_{\xi,\zeta}(t)f-f}{t}.
$$
Let $\calD_\omega$ be the closure of $\text{dom}(Q^\omega)$ in the $N_\omega$-norm. For each $\omega\in\mathbf{R}$, we define the operator $\Q^\omega$ as in \eqref{Aomega-2}, that is
$$
\text{dom}(\Q^\omega)=\{f\in\calD_\omega:f\in\text{dom}(Q^\omega),Q^\omega f\in\calD_\omega\},\quad\Q^\omega f=Q^\omega f,\,f\in\text{dom}(\Q^\omega).
$$
Let $f\in\text{dom}(Q)$. Then there exists $\omega\in\mathbf{R}$ such that $f\in\text{dom}(Q^\omega)$. We have
$$
Q f=Q^\omega f=\lim_{t\to 0^+}\dfrac{W_{\xi,\zeta}(t)f-f}{t}\in\Sigma_\omega,
$$
which gives
$$
Q f(z)=\lim_{t\to 0^+}\dfrac{W_{\xi,\zeta}(t)f(z)-f(z)}{t},\quad\forall z\in\mathbf{C}.
$$
Consequently, taking into account the expressions of $(\zeta_t)_{t\geq 0}$ and $(\xi_t)_{t\geq 0}$ in \eqref{form1}, we obtain
$$
Q f(z)=\dfrac{d}{dt}W_{\xi,\zeta}(t)f(z)\bigg\arrowvert_{t=0}=(F+aEz)f(z)+Ef'(z),\quad\forall z\in\mathbf{C},
$$
which implies $Q\preceq X_1$, and hence,
$$
\Q^\omega\preceq Q^\omega\preceq X_{1,\omega},\quad\forall\omega\in\mathbf{R}.
$$
As mentioned in Proposition \ref{Thm220}, the operator $\Q^\omega$ is the generator of the $C_0$-semigroup $(W_{\xi,\zeta}(t)|_{\calD_\omega})_{t\geq 0}$ of bounded, linear operators acting on the Banach space $(\calD_\omega,N_\omega)$. By \cite[Proposition I.5.5, Generation Theorem II.3.8]{EN}, there are constants $\alpha\geq 1$, $\theta\in\mathbf{R}$ such that
$$
N_\omega (W_{\xi,\zeta}(t)f)\leq \alpha e^{\theta t}N_\omega(f),\quad\forall t\geq 0,\forall f\in\calD_\omega,
$$
and moreover, $(\theta,\infty)\subseteq\rho(\Q^\omega)$.

Let $\lambda\in (\theta,\infty)$. Then the operator $\Q^\omega-\lambda I$ is onto. Let $f\in\ker(X_{1,\omega}-\lambda I)$. Then $f\in\text{dom}(X_{1,\omega})\subseteq\calF^2$, and
$$
(F+aEz-\lambda)f(z)+Ef'(z)=(X_{1,\omega}-\lambda I)f(z)=0,\quad\forall z\in\mathbf{C}.
$$
The equation above has the general solution
$$
f(z)=f(0)\exp\left(\dfrac{(\lambda-F)z}{E}-\dfrac{az^2}{2}\right).
$$
Since $|a|=1$, by \cite[Theorem 1.1]{KHI}, $f\in\calF^2$ if and only if $f(0)=0$, or equivalently $f=\mathbf{0}$. Thus, the operator $X_{1,\omega}-\lambda I$ is one-to-one. By \cite[Lemma 1.3]{KS}, we must have $X_{1,\omega}-\lambda I=\Q^\omega-\lambda I$, and hence, $\Q^\omega=Q^\omega= X_{1,\omega}$.

(2) This conclusion follows directly from Theorem \ref{stone-1}.

(3) Suppose that $\re[aE^2+|E|^2]>0$. 

(3-a) We prove by a contradiction that the operator $Q$ is not $\calC_{a,b,c}$-selfadjoint. Indeed, assume that the operator $Q$ is $\calC_{a,b,c}$-selfadjoint, and hence, by \cite{hai2018complex}, it must be maximal, that is
$$
\text{dom}(Q)=\{f\in\calF^2:(F+aEz)f(z)+Ef'(z)\in\calF^2\}.
$$
It is clear that $\mathbf{1}\in\{f\in\calF^2:(F+aEz)f(z)+Ef'(z)\in\calF^2\}=\text{dom}(Q)$, which means that there exists $\omega\in\mathbf{R}$ such that
$$
\mathbf{1}\in\{f\in\Sigma_\omega:(F+aEz)f(z)+Ef'(z)\in\Sigma_\omega\}\subseteq\Sigma_\omega.
$$
Since $W_{\xi,\zeta}(t)\mathbf{1}(z)=e^{Ft+aE^2 t^2/2+zaEt}=e^{Ft+aE^2 t^2/2}K_{t\overline{aE}}(z)$, we have
$$
\|W_{\xi,\zeta}(t)\mathbf{1}\|=e^{t\re F+t^2\re(aE^2)/2+t^2|aE|^2/2}=e^{t\re F+t^2\re(aE^2)/2+t^2|E|^2/2},
$$
and so
$$
\sup_{t\geq 0}e^{-\omega t}\|W_{\xi,\zeta}(t)\mathbf{1}\|=\sup_{t\geq 0}e^{(\re F-\omega)t+t^2\re(aE^2+|E|^2)/2}=\infty.
$$
Thus, $\mathbf{1}\notin\Sigma_\omega$, but this is impossible.

(3-b) Assume that the operator $R^*$ is the generator of the adjoint semigroup $(W_{\xi,\zeta}(t)^*)_{t\geq 0}$. By Theorem \ref{stone-1}(2), the operator $R$ is $\calC_{a,b,c}$-selfadjoint; but this is impossible by (3-a).

(4) Assume that $\sig_p(Q)\ne\emptyset$. Take $\eta \in \sig_p(Q)$. By definition of a point spectrum, there is $f \in \text{dom}(Q)\setminus \{0\}\subseteq\calF^2\setminus \{0\}$ such that $Q f=\eta f$, which gives
$$
Ef'(z)=(\eta-F-aEz)f(z).
$$
This differential equation has the general solution
$$
f(z)=f(0)e^{\frac{(\eta-F)z}{E}-\frac{az^2}{2}}\in\calF^2\setminus \{0\}.
$$
But this contradicts \cite[Theorem 1.1]{KHI}.
\end{proof}

\begin{prop}\label{generator-2}
Let $(W_{\xi,\zeta}(t))_{t\geq 0}$ be a $\calC_{a,b,c}$-selfadjoint, unbounded semigroup, which is of forms \eqref{T(t)-ABCD} and \eqref{form2}. Then 
\begin{enumerate}
\item The generator $R$ of the semigroup $(W_{\xi,\zeta}(t))_{t\geq 0}$ is exactly
$$
\text{dom}(R)=\underset{\omega\in\mathbf{R}}{\bigcup}\{f\in\Sigma_\omega:[H-\ell (aG+b)z]f+\ell (z-G)f'\in\Sigma_\omega\},
$$
$$
R f(z)=[H-\ell (aG+b)z]f(z)+\ell (z-G)f'(z),\quad z\in\mathbf{C}.
$$
\item The operator $R$ is $\calC_{a,b,c}$-symmetric.
\item If $\ell >0$ and $aG+b\ne 0$, then 
\begin{enumerate}
\item the operator $R$ is not $\calC_{a,b,c}$-selfadjoint.
\item the operator $R^*$ is not the generator of the adjoint semigroup $(W_{\xi,\zeta}(t)^*)_{t\geq 0}$.
\end{enumerate}
\item The point spectrum satisfies $\sigma_p(R)\subseteq\{H-\ell (aG+b)G+k\ell:k\in\mathbf{N}\}$.
\end{enumerate}
\end{prop}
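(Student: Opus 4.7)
The argument mirrors that of Proposition~\ref{generator-1}, with the affine semiflow of \eqref{form2} replacing the translation semiflow of \eqref{form1}. For part (1), let $X_{2}$ denote the operator defined by the stated formula. I would first prove $R\preceq X_{2}$ by differentiating $W_{\xi,\zeta}(t)f(z)$ pointwise at $t=0$: using $C'(0)=H$, $D'(0)=-\ell(aG+b)$, and $\tfrac{d}{dt}\zeta_{t}(z)|_{t=0}=\ell(z-G)$, one recovers the displayed first-order differential expression. Defining $X_{2,\omega}$ as the restriction of $X_{2}$ to those $f\in\Sigma_{\omega}$ with $X_{2}f\in\Sigma_{\omega}$, the same reasoning as in Proposition~\ref{generator-1} yields $\Q^{\omega}\preceq R^{\omega}\preceq X_{2,\omega}$, where $R^{\omega}$ and $\Q^{\omega}$ are defined as in \eqref{Aomega-1-a} and \eqref{Aomega-2}. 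By Proposition~\ref{Thm220} and Hille--Yosida applied to the bounded $C_{0}$-semigroup $(W_{\xi,\zeta}(t)|_{\calD_{\omega}})_{t\geq 0}$, there exists $\theta\in\mathbf{R}$ with $(\theta,\infty)\subseteq\rho(\Q^{\omega})$.

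The crux is to produce a single $\lambda>\theta$ for which $X_{2,\omega}-\lambda I$ is injective, so that \cite[Lemma~1.3]{KS} forces $X_{2,\omega}=\Q^{\omega}$ and hence $R=X_{2}$. The equation $(X_{2,\omega}-\lambda I)f=0$ rewrites as $\ell(z-G)f'(z)=[\lambda-H+\ell(aG+b)z]f(z)$; partial fractions on the right-hand side integrate to the general solution
$$f(z)=\text{const}\cdot e^{(aG+b)z}(z-G)^{\nu},\qquad \nu=(aG+b)G+(\lambda-H)/\ell.$$
Entirety of $f$ (necessary for $f\in\calF^{2}$) forces $\nu\in\mathbf{N}$. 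Since the exceptional set $\{H-\ell(aG+b)G+k\ell:k\in\mathbf{N}\}$ is countable while $(\theta,\infty)$ is uncountable, I can pick a real $\lambda>\theta$ outside it, obliging $f=0$ and completing part (1). Part (2) is then immediate from Theorem~\ref{stone-1}(3).

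For part (3-a), the test function is $\mathbf{1}$. Since $R\mathbf{1}(z)=H-\ell(aG+b)z$ is a polynomial in $\calF^{2}$, the constant function lies in the maximal domain of the differential expression. On the other hand $W_{\xi,\zeta}(t)\mathbf{1}=C(t)\,K_{\overline{D(t)}}$ is a scalar multiple of a reproducing kernel, so
$$\|W_{\xi,\zeta}(t)\mathbf{1}\|^{2}=|C(t)|^{2}\,e^{|D(t)|^{2}}.$$
Under the hypothesis $\ell>0$ and $aG+b\ne 0$, the term $|D(t)|^{2}=|aG+b|^{2}(1-e^{\ell t})^{2}\sim|aG+b|^{2}e^{2\ell t}$ dominates every exponential-in-$t$ factor appearing in $|C(t)|^{2}$, so $\|W_{\xi,\zeta}(t)\mathbf{1}\|$ grows doubly exponentially and $\mathbf{1}\notin\Sigma_{\omega}$ for any $\omega$. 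Hence $\mathbf{1}\notin\text{dom}(R)$, so $R$ is a proper restriction of its maximal extension; by \cite{hai2018complex} this prohibits $\calC_{a,b,c}$-selfadjointness. Part (3-b) follows immediately: were $R^{*}$ the generator of $(W_{\xi,\zeta}(t)^{*})_{t\geq 0}$, Theorem~\ref{stone-1}(2) would give $R=\calC_{a,b,c}R^{*}\calC_{a,b,c}$, contradicting (3-a). Part (4) reuses the injectivity ODE: any eigenfunction with eigenvalue $\eta$ must be of the form $e^{(aG+b)z}(z-G)^{k}$ with $k=(aG+b)G+(\eta-H)/\ell\in\mathbf{N}$, yielding the claimed inclusion. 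The main obstacle is the super-exponential growth estimate in part (3-a), which requires carefully balancing the competing $|C(t)|^{2}$ and $e^{|D(t)|^{2}}$ contributions to confirm that the latter dominates.
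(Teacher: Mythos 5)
Your proposal is correct and follows the paper's overall skeleton for part (1) --- define $X_2$, show $R\preceq X_2$ by pointwise differentiation at $t=0$, pass to the restricted operators $X_{2,\omega}$, obtain surjectivity of $\Q^\omega-\lambda I$ from the generation theorem on $(\calD_\omega,N_\omega)$, and invoke \cite[Lemma 1.3]{KS} --- but it diverges at the one genuinely delicate step, the injectivity of $X_{2,\omega}-\lambda I$. You solve the first-order ODE explicitly, observe that a nonzero entire solution must be $e^{(aG+b)z}(z-G)^{\nu}$ with $\nu=(aG+b)G+(\lambda-H)/\ell\in\mathbf{N}$, and then choose $\lambda>\theta$ outside the countable exceptional set $\{H-\ell(aG+b)G+k\ell:k\in\mathbf{N}\}$. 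The paper instead conjugates the ODE along the flow $z=\zeta_t(w)$, showing that any kernel element satisfies $W_{\xi,\zeta}(t)f=e^{\lambda t}f$, which contradicts the growth bound $N_\omega(W_{\xi,\zeta}(t)f)\leq\alpha e^{\theta t}N_\omega(f)$ for \emph{every} $\lambda>\theta$. Your route is more elementary and self-contained (it needs only that kernel elements are entire, not that they obey the $N_\omega$-growth estimate), at the cost of yielding injectivity only off a countable set --- which is all that \cite[Lemma 1.3]{KS} requires, so nothing is lost. The same explicit ODE solution then gives you part (4) directly, where the paper defers to \cite{hai2018complex}; your version is the more transparent of the two. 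Parts (2), (3-a) and (3-b) coincide with the paper's arguments: in (3-a) your dominance analysis of $e^{|D(t)|^2}\sim e^{|aG+b|^2e^{2\ell t}}$ over $|C(t)|^2$, whose exponent grows only like $e^{\ell t}$, is exactly the computation behind the paper's displayed supremum being infinite.
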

\begin{proof}
(1) Let us define the operator $X_2$ by setting
$$
\text{dom}(X_2)=\underset{\omega\in\mathbf{R}}{\bigcup}\{f\in\Sigma_\omega:[H-\ell (aG+b)z]f+\ell (z-G)f'\in\Sigma_\omega\},
$$
$$
X_2 f(z)=[H-\ell (aG+b)z]f(z)+\ell (z-G)f'(z),\quad z\in\mathbf{C}.
$$
If $\omega_1\leq\omega_2$, then $\Sigma_{\omega_1}\subseteq\Sigma_{\omega_2}$, and hence,
\begin{eqnarray*}
&&\{f\in\Sigma_{\omega_1}:[H-\ell (aG+b)z]f+\ell (z-G)f'\in\Sigma_{\omega_1}\}\\
&&\subseteq \{f\in\Sigma_{\omega_2}:[H-\ell (aG+b)z]f+\ell (z-G)f'\in\Sigma_{\omega_2}\}.
\end{eqnarray*}
Thus, the operator $X_2$ is well-defined. 

By \cite[Theorem 2.15]{RJH}, the generator $R$ is of the following form
$$\text{dom}(R)=\underset{\omega\in\mathbf{R}}{\bigcup}\text{dom}(R^\omega),\quad Rf=R^\omega f,\, f\in\text{dom}(R^\omega),$$
where the operators $R^\omega$, $\omega\in\mathbf{R}$ are defined as in \eqref{Aomega-1-a}, that is
\begin{eqnarray*}
\text{dom}(R^\omega)&=&\bigg\{f\in\Sigma_\omega:\,\text{$W_{\xi,\zeta}(\cdot)f$ is differentiable at every $t>0$}\\
\nonumber&&\quad\text{and there exists $g\in\Sigma_\omega$ such that $g=\lim\limits_{t\to 0^+}\dfrac{W_{\xi,\zeta}(t)f-f}{t}$}\bigg\},
\end{eqnarray*}
$$
R^\omega f=\lim\limits_{t\to 0^+}\dfrac{W_{\xi,\zeta}(t)f-f}{t}.
$$
Let $\calD_\omega$ be the closure of $\text{dom}(Q^\omega)$ in the $N_\omega$-norm. For each $\omega\in\mathbf{R}$, we define the operator $\R^\omega$ as in \eqref{Aomega-2}, that is
$$
\text{dom}(\R^\omega)=\{f\in\calD_\omega:x\in\text{dom}(R^\omega),R^\omega f\in\calD_\omega\},\quad\R^\omega f=R^\omega f,\, f\in\text{dom}(\R^\omega).
$$
Let $f\in\text{dom}(R)$. Then there exists $\omega\in\mathbf{R}$ such that $f\in\text{dom}(R^\omega)$. We have
$$
R f=R^\omega f=\lim_{t\to 0^+}\dfrac{W_{\xi,\zeta}(t)f-f}{t}\in\Sigma_\omega,
$$
which gives
$$
R f(z)=\lim_{t\to 0^+}\dfrac{W_{\xi,\zeta}(t)f(z)-f(z)}{t},\quad\forall z\in\mathbf{C}.
$$
Consequently, taking into account of forms of $(\zeta_t)_{t\geq 0}$ and $(\xi_t)_{t\geq 0}$ in \eqref{form1}, we get
$$
R f(z)=\dfrac{d}{dt}W_{\xi,\zeta}(t)f(z)\bigg\arrowvert_{t=0}=[H-\ell (aG+b)z]f(z)+\ell (z-G)f'(z),\quad\forall z\in\mathbf{C},
$$
which implies $R\preceq X_2$, and hence
$$
\R^\omega\preceq R^\omega\preceq X_{2,\omega},\quad\forall\omega\in\mathbf{R}.
$$
As mentioned in Proposition \ref{Thm220}, the operator $\R^\omega$ is the generator of the $C_0$-semigroup $(W_{\xi,\zeta}(t)|_{\calD_\omega})_{t\geq 0}$ of bounded, linear operators acting on the Banach space $(\calD_\omega,N_\omega)$. By \cite[Proposition I.5.5, Generation Theorem II.3.8]{EN}, there are constants $\alpha\geq 1$, $\theta\in\mathbf{R}$ such that
$$
N_\omega (W_{\xi,\zeta}(t)f)\leq \alpha e^{\theta t}N_\omega(f),\quad\forall t\geq 0,\forall f\in\calD_\omega,
$$
and moreover $(\theta,\infty)\subseteq\rho(\R^\omega)$.

Let $\lambda\in (\theta,\infty)$. Then the operator $\R^\omega-\lambda I$ is onto. Let $f\in\ker(X_{2,\omega}-\lambda I)$. Then $f\in\text{dom}(X_{2,\omega})\subseteq\calF^2$, and
$$
[H-\lambda-\ell (aG+b)z]f(z)+\ell (z-G)f'(z)=(X_{2,\omega}-\lambda I)f(z)=0,\quad\forall z\in\mathbf{C}.
$$
Letting $z=\zeta_t(w)$, we get
$$
[H-\lambda-\ell(aG+b)\zeta_t(w)]f(\zeta_t(w))+\ell (z-G)f'(\zeta_t(w))=0.
$$
Since
$$\dfrac{\partial \zeta_t(w)}{\partial t}=\ell e^{t\ell}(w-G)=\ell (z-G),$$
the above equation is rewritten as follows
$$
[H-\lambda-\ell(aG+b)\zeta_t(w))f(\zeta_t(w)]+\dfrac{\partial \zeta_t(w)}{\partial t}f'(\zeta_t(w))=0,
$$
which is equivalent to
\begin{eqnarray}\label{revise-tue}
[H-\lambda-\ell(aG+b)\zeta_t(w)]v(t)+v'(t)=0,
\end{eqnarray}
where $v(t)=f(\zeta_t(w))$. Setting
$$u(t)=v(t)e^{-(aG+b)G(1+\ell t-e^{\ell t})-t(\lambda-H)-(e^{\ell t}-1)(aG+b)w},$$
by the product rule for derivatives, we have
\begin{eqnarray*}
v'(t)
&=& u'(t)e^{(aG+b)G(1+\ell t-e^{\ell t})+t(\lambda-H)+(e^{\ell t}-1)(aG+b)w}\\
&& +[-H+\lambda+\ell(aG+b)\zeta_t(w)]v(t),
\end{eqnarray*}
which implies, by \eqref{revise-tue}, that $u'(t)=0$. Hence,
$$
u(t)=u(0),\quad\forall t\geq 0.
$$
Taking into account of the form of $u(t)$, we have
$$
f(\zeta_t(w))=e^{(aG+b)G(1+\ell t-e^{\ell t})+t(\lambda-H)+(e^{\ell t}-1)(aG+b)w}f(w),\quad \forall w\in\C,
$$
and so $W_{\xi,\zeta}(t)f(w)=e^{\lambda t}f(w)$. Thus, we have
$$
\alpha e^{\theta t}N_\omega(f)\geq N_\omega (W_{\xi,\zeta}(t)f)=e^{\lambda t}N_\omega(f),\quad\forall t\geq 0.
$$
Since $\lambda >\theta$, we must have $N_\omega(f)=0$, which means that the operator $X_{2,\omega}-\lambda I$ is one-to-one. By \cite[Lemma 1.3]{KS}, we must have $X_{2,\omega}-\lambda I=\R^\omega-\lambda I$, and hence, $\R^\omega=R^\omega= X_{2,\omega}$.

(2) This conclusion follows directly from Theorem \ref{stone-1}.

(3) Suppose that $\ell>0$ and $aG+b\ne 0$. 

(3-a) We prove by a contradiction that the operator $R$ is not $\calC_{a,b,c}$-selfadjoint. Indeed, assume that the operator $R$ is $\calC_{a,b,c}$-selfadjoint, and hence by \cite{hai2018complex}, it must be maximal, that is
$$
\text{dom}(R)=\{f\in\calF^2:[H-\ell (aG+b)z]f+\ell (z-G)f'\in\calF^2\}.
$$
It is clear that $\mathbf{1}\in\{f\in\calF^2:[H-\ell (aG+b)z]f+\ell (z-G)f'\in\calF^2\}$. Then there exists $\omega\in\mathbf{R}$ such that
$$
\mathbf{1}\in\{f\in\Sigma_\omega:[H-\ell (aG+b)z]f+\ell (z-G)f'\in\Sigma_\omega\}\subseteq\Sigma_\omega.
$$
Since $W_{\xi,\zeta}(t)\mathbf{1}(z)=C(t)K_{\overline{D(t)}}(z)$, we have
$$
\|W_{\xi,\zeta}(t)\mathbf{1}\|=|C(t)|e^{|D(t)|^2/2},
$$
and so,
$$
\sup_{t\geq 0}e^{-\omega t}\|W_{\xi,\zeta}(t)\mathbf{1}\|=\sup_{t\geq 0}e^{t(\re H-\omega)+(e^{\ell t}-\ell t-1)\re[G(aG+b)]+|aG+b|^2\cdot |e^{\ell t}-1|^2/2}=\infty.
$$
Thus, $\mathbf{1}\notin\Sigma_\omega$, but this is impossible.

(3-b) Assume that the operator $R^*$ is the generator of the adjoint semigroup $(W_{\xi,\zeta}(t)^*)_{t\geq 0}$. By Theorem \ref{stone-1}(2), the operator $R$ is $\calC_{a,b,c}$-selfadjoint; but this is impossible by (3-a).

(4) Note that $R$ is a differential operator with a non-maximal domain. Thus, this conclusion follows from \cite{hai2018complex}.
\end{proof}

\begin{rem}
The inverse inclusion in Proposition \ref{generator-2}(4) depends heavily on the structure of the functions
$$
f_m(z)=(z-G)^m e^{z(aG+b)},\quad z\in\C,\,m\in\mathbf{N}.
$$
To see this, we note the reader that
$$
[H-\lambda-\ell (aG+b)z]f_m(z)+\ell (z-G)f_m'(z)=[H-\ell (aG+b)G+m\ell]f_m(z),\quad z\in\mathbf{C}.
$$
Thus, if there exists some $\omega\in\mathbf{R}$ such that $f_m(\cdot)\in\Sigma_\omega$ and $Rf_m(\cdot)\in\Sigma_\omega$, then $H-\ell (aG+b)G+m\ell\in\sigma_p(R)$.
\end{rem}

\section*{Acknowledgements}
The authors would like to thank the referee for insightful comments on the paper.

\bibliographystyle{plain}
\bibliography{refs}

\begin{thebibliography}{10}

\bibitem{BB-Phy}
B.~Bagchi.
\newblock Evolution operator for time-dependent non-hermitian hamiltonians.
\newblock {\em Letters in High Energy Physics}, 1(3):~04--08, 2018.

\bibitem{bender2012quantum}
C.~Bender, A.~Fring, U.~G{\"u}nther, and H.~Jones.
\newblock Quantum physics with non-{H}ermitian operators.
\newblock {\em J. Phys. A: Math. Theor.}, 45(44):440301, 2012.

\bibitem{bender1998real}
C.~M. Bender and S.~Boettcher.
\newblock Real spectra in non-{H}ermitian {H}amiltonians having
  $\mathcal{PT}$-symmetry.
\newblock {\em Phys. Rev. Lett.}, 80(24):5243, 1998.

\bibitem{de1960adjoint}
K.~de~Leeuw.
\newblock On the adjoint semigroup and some problems in the theory of
  approximation.
\newblock {\em Math. Z.}, 73(3):~219--234, 1960.

\bibitem{Faria}
C.~Figueira de~Morisson~Faria and A.~Fring.
\newblock Time evolution of non-{H}ermitian {H}amiltonian systems.
\newblock {\em J. Phys. A: Math. Theor.}, 39(29):9269, 2006.

\bibitem{EN}
K.~J. Engel and R.~Nagel.
\newblock {\em One-Parameter Semigroups for Linear Evolution Equations}.
\newblock Graduate Texts in Mathematics, 194. Springer-Verlag, New York, 2000.

\bibitem{FHO}
H.~O. Fattorini.
\newblock {\em The Cauchy problem}.
\newblock Encyclopedia of Mathematics and its Applications, 18. Addison-Wesley
  Publishing Co., Reading, Mass., 1983.

\bibitem{feller1953semi}
W.~Feller.
\newblock Semi-groups of transformations in general weak topologies.
\newblock {\em Ann. of Math.}, pages ~287--308, 1953.

\bibitem{GPP}
S.~R. Garcia, E.~Prodan, and M.~Putinar.
\newblock Mathematical and physical aspects of complex symmetric operators.
\newblock {\em J. Phys. A}, 47:~353001, 54 pp, 2014.

\bibitem{GP1}
S.~R. Garcia and M.~Putinar.
\newblock Complex symmetric operators and applications.
\newblock {\em Trans. Amer. Math. Soc.}, 358:~1285--1315, 2006.

\bibitem{GP2}
S.~R. Garcia and M.~Putinar.
\newblock Complex symmetric operators and applications {II}.
\newblock {\em Trans. Amer. Math. Soc.}, 359:~3913--3931, 2007.

\bibitem{hai2018some}
P.~V. Hai.
\newblock Unbounded weighted composition operators on {F}ock space.
\newblock {\em \emph{to appear in} Potential Anal.}, 2019.
\newblock DOI: 10.1007/s11118-018-09757-5.

\bibitem{HK2}
P.~V. Hai and L.~H. Khoi.
\newblock Boundedness and compactness of weighted composition operators on
  {F}ock spaces $\mathcal{F}^p(\mathbb{C})$.
\newblock {\em Acta Math. Vietnam.}, 41:~531--537, 2016.

\bibitem{HK1}
P.~V. Hai and L.~H. Khoi.
\newblock Complex symmetry of weighted composition operators on the {F}ock
  space.
\newblock {\em J. Math. Anal. Appl.}, 433:~1757--1771, 2016.

\bibitem{HK4}
P.~V. Hai and L.~H. Khoi.
\newblock Complex symmetric ${C}_0$-semigroups on the {F}ock space.
\newblock {\em J. Math. Anal. Appl.}, 445:~1367--1389, 2017.

\bibitem{hai2018complex}
P.~V. Hai and M.~Putinar.
\newblock Complex symmetric differential operators on {F}ock space.
\newblock {\em J. Differential Equations}, 265:~4213--4250, 2018.

\bibitem{hille1996functional}
E.~Hille and R.~S. Phillips.
\newblock {\em Functional Analysis and Semi-groups}, volume~31.
\newblock American Mathematical Soc., 1996.

\bibitem{RJH}
R.~J. Hughes.
\newblock Semigroups of unbounded linear operators in {B}anach space.
\newblock {\em Trans. Amer. Math. Soc.}, 230:~113--145, 1977.

\bibitem{KHI}
K.~H. Izuchi.
\newblock Cyclic vectors in the {F}ock space over the complex plane.
\newblock {\em Proc. Amer. Math. Soc.}, 133:~3627--3630, 2005.

\bibitem{JTTY}
F.~Jafari, T.~Tonev, E.~Toneva, and K.~Yale.
\newblock Holomorphic flows, cocycles, and coboundaries.
\newblock {\em Michigan Math. J.}, 44(2):~239--253, 1997.

\bibitem{TK}
T.~Kato.
\newblock {\em Perturbation Theory for Linear Operators}.
\newblock Classics in Mathematics. Springer-Verlag, Berlin, 1995.

\bibitem{TL}
T.~Le.
\newblock Normal and isometric weighted composition operators on the {F}ock
  space.
\newblock {\em Bull. Lond. Math. Soc.}, 46:~847--856, 2014.

\bibitem{Miao-Xu}
Y.~G. Miao and Z.~M. Xu.
\newblock Investigation of non-{H}ermitian {H}amiltonians in the {H}eisenberg
  picture.
\newblock {\em Physics Letters A}, 380(21):~1805--1810, 2016.

\bibitem{partington2004linear}
J.~R. Partington.
\newblock {\em Linear Operators and Linear Systems: an Analytical Approach to
  Control Theory}, volume~60.
\newblock Cambridge University Press, 2004.

\bibitem{phillips1955adjoint}
R.~S. Phillips.
\newblock The adjoint semi-group.
\newblock {\em Pacific J. Math}, 5(2):~269--283, 1955.

\bibitem{KS}
K.~Schm\"udgen.
\newblock {\em Unbounded Self-adjoint Operators on Hilbert Space}.
\newblock Graduate Texts in Mathematics, 265. Springer, Dordrecht, 2012.

\bibitem{Scolarici}
G.~Scolarici and L.~Solombrino.
\newblock Time evolution of non-{H}ermitian quantum systems and generalized
  master equations.
\newblock {\em Czech. J. Phys.}, 56(9):~935--941, 2006.

\bibitem{Tamilselvan}
K.~Tamilselvan, T.~Kanna, and Avinash Khare.
\newblock A systematic construction of parity-time ({$\mathcal{PT}$})-symmetric
  and non-{$\mathcal{PT}$}-symmetric complex potentials from the solutions of
  various real nonlinear evolution equations.
\newblock {\em J. Phys. A: Math. Theor.}, 50(41):415203, 2017.

\bibitem{tucsnak2009observation}
M.~Tucsnak and G.~Weiss.
\newblock {\em Observation and Control for Operator Semigroups}.
\newblock Birkh\"auser Advanced Texts: Basel Textbooks. Birkh\"auser Verlag,
  Basel, 2009.

\bibitem{Znojil-2015}
M.~Znojil.
\newblock Non-self-adjoint operators in quantum physics: ideas, people, and
  trends.
\newblock In {\em Non-selfadjoint operators in quantum physics}, pages 7--58.
  Wiley, Hoboken, NJ, 2015.

\end{thebibliography}
\end{document}